\newcommand{\de}{\partial}
\newcommand{\db}{\overline{\partial}}
\newcommand{\ddt}{\frac{\partial}{\partial t}}
\newcommand{\ddbar}{\sqrt{-1} \partial \overline{\partial}}
\newcommand{\Ric}{\mathrm{Ric}}
\newcommand{\ov}[1]{\overline{#1}}
\newcommand{\mn}{\sqrt{-1}}
\newcommand{\tr}[2]{\mathrm{tr}_{#1}{#2}}
\newcommand{\ti}[1]{\tilde{#1}}
\newcommand{\vp}{\varphi}
\newcommand{\ve}{\varepsilon}
\newcommand{\of}{\omega_{\textrm{SRF}}}
\newcommand{\F}{\mathcal{F}}
\renewcommand{\leq}{\leqslant}
\renewcommand{\geq}{\geqslant}
\renewcommand{\le}{\leqslant}
\renewcommand{\ge}{\geqslant}
\numberwithin{equation}{section}
\begin{document}
\newtheorem{claim}{Claim}
\newtheorem{theorem}{Theorem}[section]
\newtheorem{lemma}[theorem]{Lemma}
\newtheorem{corollary}[theorem]{Corollary}
\newtheorem{proposition}[theorem]{Proposition}
\newtheorem{question}[theorem]{Question}
\newtheorem{conjecture}[theorem]{Conjecture}

\theoremstyle{definition}
\newtheorem{remark}[theorem]{Remark}

\title[Collapsing limits]{The K\"ahler-Ricci flow, Ricci-flat metrics and collapsing limits}
\author[V. Tosatti]{Valentino Tosatti}
\thanks{Research supported in part by NSF grants DMS-1308988, DMS-1332196 and DMS-1406164.  The first-named author is supported in part by a Sloan Research Fellowship.}
\address{Department of Mathematics, Northwestern University, 2033 Sheridan Road, Evanston, IL 60208}
\author[B. Weinkove]{Ben Weinkove}
\author[X. Yang]{Xiaokui Yang}
\begin{abstract}  We investigate the K\"ahler-Ricci flow on holomorphic fiber spaces whose generic fiber is a Calabi-Yau manifold.  We establish uniform metric convergence to a metric on the base, away from the singular fibers, and show that the rescaled metrics on the fibers converge to Ricci-flat K\"ahler metrics.  This strengthens previous work of Song-Tian and others.  We obtain analogous results for degenerations of Ricci-flat K\"ahler metrics.
\end{abstract}

\maketitle

\section{Introduction}

\subsection{Background}  This paper establishes convergence results for collapsing metrics along the K\"ahler-Ricci flow and along families of Ricci-flat K\"ahler metrics.  We give now some background and motivation for these results,
glossing over some technicalities for the moment.

Let $X$ be a compact K\"ahler manifold, and suppose we have a holomorphic surjective map $\pi: X \rightarrow B$ onto another compact K\"ahler manifold $B$, with $0<\dim B < \dim X.$
We consider a family of K\"ahler metrics $(\omega(t))_{t\in [0,\infty)}$ on  $X$ with the property that the cohomology classes $[\omega(t)] \in H^{1,1}(X, \mathbb{R})$ have the limiting behavior:
$$[\omega(t)] \rightarrow \pi^*\alpha, \quad \textrm{in } H^{1,1}(X, \mathbb{R}),$$
where $\alpha$ is a K\"ahler class on $B$.  Moreover, we suppose that the K\"ahler metrics $\omega(t)$ solve one of the two natural PDEs:  the K\"ahler-Ricci flow or the Calabi-Yau equation.
Note that from topological considerations, the metrics $\omega(t)$ are automatically \emph{volume collapsing}, meaning that
$$\int_X \omega(t)^{\dim X} = [\omega(t)]^{\dim X} \rightarrow 0, \quad \textrm{as } t\rightarrow \infty,$$
since $B$ has dimension strictly less than $\dim X$.

A natural question is: does $(X, \omega(t))$ converge to  $(B,\omega_B)$, where $\omega_B$ is a unique canonical metric in the class $\alpha$?

 In the cases we consider, the existence of the ``canonical metric'' has already been established, and the remaining point is to  establish convergence in the strongest possible sense.
There are weak notions of convergence, such as convergence of $\omega(t)$ to $\omega_B$ as currents, or convergence at the level of K\"ahler potentials in $C^{1,\alpha}$, say,  for $\alpha \in (0,1)$.  A stronger notion of convergence is convergence of $\omega(t)$ to $\omega_B$  in the $C^0$ norm.  This kind of convergence implies global Gromov-Hausdorff convergence.  The main point of this paper is that we can obtain, in many cases, convergence of the stronger type when before only weaker convergence was known.

To be more explicit, we discuss now the simplest case of such collapsing for the K\"ahler-Ricci flow, which is already completely understood.  Let $X$ be a product $X= E \times B$ where $B$ is a compact Riemann surface of genus larger than $1$ and $E$ is a one-dimensional torus (an elliptic curve).  We consider the K\"ahler-Ricci flow \cite{Ha, Ca}
\begin{equation} \label{firstkrf}
\ddt{} \omega = - \Ric(\omega) - \omega, \quad \omega|_{t=0}=\omega_0,
\end{equation}
starting at an arbitrary initial K\"ahler metric $\omega_0$ on $X$.    A solution to (\ref{firstkrf}) exists for all time \cite{TZ}.
Note that (\ref{firstkrf}) is really the \emph{normalized} K\"ahler-Ricci flow.  It has the property that as $t\rightarrow \infty$ the K\"ahler classes $[\omega(t)] \in H^{1,1}(X, \mathbb{R})$ converge to the class $\pi^* \alpha$ where $\alpha = [\omega_B]$, for $\omega_B$ the unique K\"ahler-Einstein metric on $B$ satisfying
\begin{equation} \label{uke}
\Ric(\omega_B)= - \omega_B.
\end{equation}
Here $\pi:X \rightarrow B$ is the projection map.

This example was first studied by Song-Tian \cite{ST} who considered the much more general situation of a holomorphic fiber space $\pi: X \rightarrow B$ of Kodaira dimension one, whose generic fibers are elliptic curves.  For the product $X= E \times B$, the results of \cite{ST, SW, Gi, FZ} show that one obtains $C^{\infty}$ convergence of $\omega(t)$ to $\omega_B$.  Moreover, the  curvature of $\omega(t)$ and its covariant derivatives  remain bounded as $t\rightarrow \infty$.  Geometrically, the metrics shrink in the directions of $E$ and we obtain convergence to the K\"ahler-Einstein metric $\omega_B$ on $B$.  Moreover, if we look at the evolving metrics restricted to any given fiber $E$ and rescale appropriately,  they converge to a flat metric on $E$.

A natural generalization of this result is to consider $X=E \times B$ where now $E$ is a Calabi-Yau manifold (i.e. $c_1(E)=0$ in $H^{1,1}(E,\mathbb{R})$) and $B$ is a compact K\"ahler manifold with $c_1(B)<0$.  The behavior of the K\"ahler-Ricci flow on the manifolds $E$ and $B$ separately is well-known by the results of Cao \cite{Ca}:  on $E$ the unnormalized K\"ahler-Ricci flow $\ddt{} \omega= - \Ric(\omega)$ converges to a K\"ahler Ricci-flat metric, and on $B$ the normalized K\"ahler-Ricci flow (\ref{firstkrf}) converges to the unique K\"ahler-Einstein metric satisfying (\ref{uke}).   However, the behavior of the flow on the product $X=E\times B$ is in general harder to understand than the two-dimensional example above.  A source of difficulty is that, except in dimension one, Calabi-Yau manifolds may not admit flat metrics.

In the case when $E$ \emph{does} admit a flat K\"ahler metric, it was shown by Gill \cite{Gi}, Fong-Zhang \cite{FZ} that  the same results hold as in the case of a product of Riemann surfaces.  In particular, the curvature remains bounded along the flow.  These methods fail for a general $E$.  Indeed,
by starting the flow at a product of a non-flat Ricci-flat metric on $E$ with a K\"ahler-Einstein metric on $B$, it is easy to check that the curvature along  the K\"ahler-Ricci flow (\ref{firstkrf}) must blow up.  One can still obtain ``weak'' convergence of the metrics \cite{ST2, FZ}.  Indeed,  the evolving metrics along the flow are comparable to the ``model'' collapsing metrics, which implies in particular that the fibers shrink and one obtains convergence to $\omega_B$ at the level of potentials in the $C^{1,\alpha}$ norm.  However, until now the question of whether the metrics $\omega(t)$ converge (in $C^0$ say) to $\omega_B$ has remained open.

Our results show that in fact  the metrics $\omega(t)$ converge exponentially fast in the $C^0$ norm to $\omega_B$, which implies in particular Gromov-Hausdorff convergence of $(X, \omega(t))$ to $(B, \omega_B)$.  Moreover, we show that if one restricts to a fiber $E$ over $y\in B$ then the metrics $\omega(t)$, appropriately rescaled, converge exponentially fast to a Ricci-flat K\"ahler metric on $E$ and this convergence is uniform in $y$.
All of these statements are new in this setting, and require new estimates, which we explain later.

Of course, the assumption that $X$ is a product is rather restrictive.  In fact, the same results hold when $\pi: X \rightarrow B$ is a holomorphic submersion, except that the K\"ahler-Einstein equation (\ref{uke}) is replaced by the twisted K\"ahler-Einstein equation \cite{ST,ST2, Fi}.  But
even this is not sufficiently general, since we also need to allow $\pi:X \rightarrow B$ to admit singular fibers, and for $B$ itself to be possibly singular. More precisely, we assume that
$\pi:X\to B$ is a fiber space, namely a surjective holomorphic map with connected fibers, and $B$ is an irreducible normal projective variety. In this case we obtain the same metric convergence results on compact subsets away from the singular fibers (but this time not exponential).  Again it was only known before that convergence occurs at the level of potentials \cite{ST2}.

The latter situation is really quite general.  Indeed, if one believes the Abundance Conjecture from algebraic geometry (that on a compact K\"ahler manifold the canonical bundle $K_X$ is nef implies $K_X$ is semiample) then whenever the K\"ahler-Ricci flow has a long-time solution, there exists such a fiber space map $\pi: X \rightarrow B$.  For the above discussion to apply, we also need to remove the two extreme cases:  when $B$ has dimension $0$ or $\dim X$.
In the first case $X$ is Calabi-Yau and it is known by the work of Cao \cite{Ca} that the K\"ahler-Ricci flow (unnormalized) converges smoothly to Yau's  Ricci-flat K\"ahler metric \cite{Ya}. In the second case $X$ is of general type (in particular, $X$ is projective), and the normalized K\"ahler-Ricci flow converges to a K\"ahler-Einstein current on $X$ \cite{Ts,TZ,Zh}, smoothly away from a subvariety.

On a general projective variety $X$, the K\"ahler-Ricci flow may encounter  finite time singularities.  According to the analytic minimal model program  \cite{ST, Ti2, ST2,ST3} the K\"ahler-Ricci flow will perform a surgery at each finite time singularity, corresponding to a birational operation of the minimal model program from algebraic geometry, and arrive at the  \emph{minimal model} $X_{\textrm{min}}$ with nef canonical bundle (see also \cite{BCHM, EGZ,EGZ2,FZ,LT,  So, SSW, ST4, SW0, SW1, SW2, SY,  TZ, Ts}).  It is then expected that the flow on $X_{\textrm{min}}$ will converge at infinity to a canonical metric on the lower dimensional canonical model $X_{\textrm{can}}$. Our results here are concerned with a special case of this last step, where $X_{\textrm{min}}=X$ is smooth with semiample canonical bundle and $X_{\textrm{can}}$ is what we call $B$.

Finally, we discuss the Ricci-flat K\"ahler metrics.  A similar situation arises here, where one considers a Calabi-Yau manifold $X$ which admits a holomorphic fiber space structure $\pi:X\to B$ as above (possibly with singular fibers and with $B$ singular), and studies the collapsing behavior of the Ricci-flat K\"ahler metrics $\omega(t)$ in the class $[\pi^*\chi]+e^{-t}[\omega_X]$ (where $\omega_X$ is a K\"ahler metric on $X$ and $\chi$ on $B$), as $t$ tends to infinity. Again one expects to have collapsing to a canonical K\"ahler metric $\omega_B$ on $B\backslash S'$ (where $S'$ denotes the singularities of $B$ together with the images of the singular fibers). Furthermore, one would like to understand if a Gromov-Hausdorff limit $(X_\infty,d_\infty)$ of $(X,\omega(t))$ exists (without passing to subsequences), whether it is sufficiently regular, and how it is related to $(B\backslash S',\omega_B)$. It is expected \cite{To3, To4} that $(X_\infty,d_\infty)$ contains an open and dense subset which is a smooth Riemannian manifold, that $(X_\infty,d_\infty)$ is isometric to the metric completion of $(B\backslash S',\omega_B)$, and that $X_\infty$ is homeomorphic to $B$. By comparison, in the case when  the volume of the Ricci-flat metrics is non-collapsing, we have a very thorough understanding of their limiting behavior, thanks to the results in \cite{CT,RZ,RZ2,RZ3,So2,To2}, and all the analogous questions have affirmative answers.

This setup is also related to the Strominger-Yau-Zaslow picture of mirror symmetry \cite{SYZ}. Indeed, Kontsevich-Soibelman \cite{KS}, Gross-Wilson \cite{GW} and Todorov (see \cite[p. 66]{Ma}) formulated a conjecture about certain collapsed limits of Ricci-flat K\"ahler metrics on Calabi-Yau manifolds, where the K\"ahler class is fixed but the complex structure degenerates (to a so-called {\em large complex structure limit}).
This might seem very different from what we are considering here, where the complex structure is fixed and the K\"ahler class degenerates, but in the case when the Calabi-Yau manifolds are actually hyperk\"ahler, one may perform a hyperk\"ahler rotation and often reduce this conjecture to studying the collapsed limits of Ricci-flat metrics in the precise setup of Theorem \ref{mainthm3} (see \cite{GTZ}), although in this case the smooth fibers are always tori. This approach was pioneered by Gross-Wilson \cite{GW}, who solved this conjecture for certain elliptically fibered $K3$ surfaces (see also the discussion after Theorem \ref{mainthm3}).

Going back to our discussion, our new results imply that the Ricci-flat metrics $\omega(t)$ converge in the $C^0$ norm to $\pi^*\omega_B$ on compact sets away from the singular fibers, improving earlier results of the first-named author \cite{To} who proved weak convergence as currents and in the $C^{1,\alpha}$ norm of potentials. Also, the rescaled metrics along the fibers converge to Ricci-flat metrics. Furthermore, we are able to show in Corollary \ref{corgh} that any Gromov-Hausdorff limit contains an open and dense subset which is a smooth Riemannian manifold (this fact in itself is quite interesting and does not follow from the general structure theory of Cheeger-Colding \cite{CC}). These new results solve some questions in \cite{To3, To4}.

\subsection{Metric collapsing along K\"ahler-Ricci flow}

Let $(X,\omega_0)$ be a compact K\"ahler manifold with canonical bundle $K_X$ semiample. By definition this means that there exists $\ell\geq 1$ such that the line bundle $K_X^\ell$ is globally generated.  If we choose $\ell$ sufficiently large and divisible, the sections in $H^0(X,K_X^\ell)$ give a holomorphic map to projective space, with image $B$ an irreducible normal projective variety, and we obtain a surjective holomorphic map $\pi:X\to B$ with connected fibers (see e.g. \cite[Theorem 2.1.27]{La}).  The dimension of $B$ is the \emph{Kodaira dimension of $X$}, written $\kappa(X)$.  We assume $0<\kappa(X)<\dim X$.

We let $m=\kappa(X) = \dim B$ and  $n+m=\dim X$.  The generic fiber $X_y=\pi^{-1}(y)$ of $\pi$ has $K_{X_y}^\ell$ holomorphically trivial, so it is a Calabi-Yau manifold of dimension $n$.

We first consider a special case, where some technicalities disappear and for which our results are stronger.  Assume:
\begin{equation*}
(*) \qquad
\begin{array}{l} \textrm{$\pi: X \rightarrow B$ is a holomorphic submersion onto} \\
\textrm{a compact K\"ahler manifold $B$ with $c_1(B)<0$,}\\
\textrm{with fibers Calabi-Yau manifolds}. \end{array}
\end{equation*}
As we will see in Section \ref{sectionkrfsubmersion}, this assumption implies that the canonical bundle of $X$ is semiample.
In this case, \emph{all} the fibers $X_y = \pi^{-1}(y)$ are  Calabi-Yau manifolds of dimension $n$ (there are no singular fibers).  Song-Tian \cite{ST2} (see also \cite{Fi}) showed that $B$ admits a unique smooth \emph{twisted K\"ahler-Einstein metric} $\omega_B$ solving
\begin{equation} \label{gkee}
\textrm{Ric}(\omega_B) = -\omega_B + \omega_{\textrm{WP}},
\end{equation}
where $\omega_{\textrm{WP}}$ is the Weil-Petersson form on $B$ (see Section \ref{subsectionsub} below).  We will often write $\omega_B$ for $\pi^*\omega_B$.

By Yau's theorem \cite{Ya}, each of the fibers $X_y$ of $\pi:X \rightarrow B$ admits a  unique Ricci-flat K\"ahler metric cohomologous to $\omega_0|_{X_y}$.  Write $\omega_{\textrm{SRF},y}$ for this metric on $X_y$.  To explain this notation: later we will define a
 \emph{semi Ricci-flat form} $\omega_{\textrm{SRF}}$ on $X$  which restricts to  $\omega_{\textrm{SRF},y}$ on $X_y$.

Our first result concerns the normalized K\"ahler-Ricci flow
\begin{equation} \label{1krfl}
\ddt{} \omega = - \Ric(\omega) - \omega, \quad \omega(0)= \omega_0,
\end{equation}
on $X$, starting at $\omega_0$.  We show that in this case we obtain exponentially fast metric collapsing of $(X, \omega(t))$ to $(B, \omega_B)$ along the K\"ahler-Ricci flow.

\begin{theorem} \label{mainthm1}
Let $\omega=\omega(t)$ be the solution of the K\"ahler-Ricci flow (\ref{1krfl}) on $X$ satisfying assumption $(*)$.  Then the following hold:
\begin{enumerate}
\item[(i)] As $t \rightarrow \infty$, $\omega$ converges exponentially fast to $\omega_B$. Namely, there exist positive constants $C, \eta>0$ such that
\begin{equation} \label{mti}
\| \omega - \omega_B\|_{C^0(X, \omega_0)} \le C e^{-\eta t}, \quad \textrm{for all } t\ge 0.
\end{equation}
\item[(ii)] The rescaled metrics $e^t \omega|_{X_y}$ converge exponentially fast to $\omega_{\emph{SRF},y}$ on each fiber $X_y$, uniformly in $y$.  Namely,
there exist positive constants $C, \eta>0$ such that
\begin{equation} \label{mtii}
\| e^t \omega|_{X_y} - \omega_{\emph{SRF},y} \|_{C^0(X_y, \omega_0|_{X_y})} \le Ce^{-\eta t}, \quad \textrm{for all } t \ge 0, \ y \in B.
\end{equation}
Moreover, if we fix $\alpha \in (0,1)$ then for each $y\in B$, $e^t\omega|_{X_y}$ converges to $\omega_{\emph{SRF},y}$ in $C^{\alpha} (X_y, \omega_0|_{X_y})$.
\item[(iii)]  As $t \rightarrow \infty$, $(X, \omega)$ converges in the Gromov-Hausdorff sense to $(B, \omega_B)$.
\end{enumerate}

\end{theorem}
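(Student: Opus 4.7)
My plan is to recast the flow as a parabolic complex Monge--Amp\`ere equation for a scalar potential against a carefully chosen time-dependent reference metric, and then push three layers of a priori estimates (zeroth order, first order, second order) each of which will turn out to decay exponentially in $t$. A cohomological check gives $[\omega(t)] = e^{-t}[\omega_0] + (1-e^{-t})\pi^*[\omega_B]$, so letting $\of$ denote the semi Ricci-flat form on $X$ (restricting to $\omega_{\textrm{SRF},y}$ on each fiber $X_y$ and cohomologous to $\omega_0 - \pi^*\omega_B$), the natural reference is
\[ \hat\omega_t = \pi^*\omega_B + e^{-t}\of, \]
which lies in $[\omega(t)]$. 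Writing $\omega(t) = \hat\omega_t + \ddbar\vp$, the flow (\ref{1krfl}) takes the form
\[ \ddt \vp = \log \frac{(\hat\omega_t + \ddbar\vp)^{n+m}}{e^{-nt}\Omega} - \vp, \]
for a fixed smooth volume form $\Omega$ on $X$ engineered so that $\Ric(\Omega) = -\pi^*\omega_B + \pi^*\omega_{\mathrm{WP}}$ in the base directions and so that $\Omega$ restricts to the Ricci-flat volume form prescribed by Yau on each fiber. The targets to aim for are exponentially small $C^0$ bounds on $\vp$, $\dot\vp$, and the trace $\tr{\hat\omega_t}{\omega}$.

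First I would run the parabolic maximum principle on $\vp$ and on $\dot\vp + \vp$ to obtain $\|\vp\|_{C^0(X)} \le Ce^{-\eta t}$, exploiting that the volume ratio on the right-hand side equals $1$ to leading order (by construction of $\Omega$) with corrections of size $e^{-t}$. The heart of the argument is a parabolic Aubin--Yau/Schwarz-type second-order estimate giving the uniform two-sided bound
\[ \tr{\hat\omega_t}{\omega} + \tr{\omega}{\hat\omega_t} \le C, \]
obtained by applying the maximum principle to an auxiliary quantity of the form $\log \tr{\hat\omega_t}{\omega} - A\vp$ for a sufficiently large constant $A$. Combined with the $C^0$ bound on $\vp$, this upgrades to exponential $C^0$ decay of $\omega - \hat\omega_t$, and since $e^{-t}\of \to 0$ exponentially, assertion (i) follows.

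For (ii), I would restrict to a fiber $X_y$ and work with $\ti{\omega} = e^t\omega|_{X_y}$: this yields a family of essentially elliptic complex Monge--Amp\`ere equations on $X_y$ in the fixed class $[\omega_0|_{X_y}]$, whose right-hand side converges exponentially to the Ricci-flat volume form. The fiberwise restriction of the second-order estimate above bounds $\ti{\omega}$ uniformly in $y$, so Ko\l odziej-type stability plus parabolic Schauder theory promote the convergence $\ti{\omega} \to \omega_{\textrm{SRF},y}$ to exponential $C^0(X_y, \omega_0|_{X_y})$ convergence, uniformly in $y$; the $C^\al$ refinement on a fixed fiber then follows by interpolation with the $C^{1,\al}$ convergence of fiber potentials already known from Song--Tian. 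Statement (iii) is essentially automatic from (i) and (ii): by (i), $\pi : X \to B$ is $\ve$-close in $C^0$ to a Riemannian submersion onto $(B, \omega_B)$, and by (ii) combined with Yau's diameter bound each fiber has $\omega(t)$-diameter at most $Ce^{-t/2}$, so pulling back $\ve$-nets from $(B, \omega_B)$ via $\pi$ produces $\ve$-nets in $(X, \omega(t))$ and yields Gromov--Hausdorff convergence.

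The main obstacle is the uniform two-sided Laplacian estimate, because the reference $\hat\omega_t$ is not K\"ahler: it has nonzero torsion in mixed horizontal--vertical directions, and its curvature has components that do not decay in the collapsing direction. The standard Aubin--Yau/Chern--Lu computation therefore does not close directly; the cancellations required to control the bad commutator and curvature terms must be extracted from the Ricci-flat equation satisfied by $\of$ along each fiber together with the twisted K\"ahler--Einstein equation (\ref{gkee}) on the base. Making this argument uniform in $y \in B$, as is needed both for (ii) and to get the exponential rate in (i), is where the genuinely new estimates advertised in the introduction must carry the weight.
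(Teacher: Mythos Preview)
Your setup and zeroth-order analysis are fine and match the paper, but the central step of your argument contains a genuine gap. You assert that the uniform two-sided trace bound $\tr{\hat\omega_t}{\omega}+\tr{\omega}{\hat\omega_t}\le C$, combined with $\|\vp\|_{C^0}\le Ce^{-\eta t}$, ``upgrades to exponential $C^0$ decay of $\omega-\hat\omega_t$.'' This is false as stated: $\omega-\hat\omega_t=\ddbar\vp$, and a $C^0$ bound on $\vp$ simply does not control $\ddbar\vp$ pointwise. Indeed, both ingredients you invoke (uniform metric equivalence from Fong--Zhang, and the decay of $\vp$) were already known before this paper, and precisely the point of the paper is that these do \emph{not} by themselves yield metric convergence. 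Similarly, for (ii) your appeal to ``parabolic Schauder theory'' is problematic because the reference geometry is collapsing, so the constants in any standard Schauder estimate blow up.

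The paper's mechanism is quite different from a two-sided Laplacian estimate. The key new ingredients are: (a) the one-sided exponential estimate $\tr{\omega}{\omega_B}-m\le Ce^{-\eta t}$, obtained by a maximum principle applied to $e^{\eta t}(\tr{\omega}{\omega_B}-m)-e^{2\eta t}(\vp+\dot\vp)$ using the evolution equation $(\partial_t-\Delta)(\vp+\dot\vp)=\tr{\omega}{\omega_B}-m$; (b) a local parabolic Calabi-type third-order estimate adapted to the collapsing reference $\omega_{E,t}$, giving $\|e^t\omega|_{X_y}\|_{C^1}\le C$ uniformly in $y$; (c) an elementary lemma that converts a one-sided sup bound on a function with zero fiberwise average and bounded fiber gradient into a two-sided bound; and (d) a linear-algebra lemma saying that if a positive matrix has trace close to $N$ and determinant close to $1$, then it is close to the identity. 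The paper first uses (a)--(d) to prove the fiberwise statement (ii), and only then deduces (i) by combining (ii) with (a) to bound $\tr{\omega}{\tilde\omega}-(n+m)$ and the volume ratio, and applying (d) once more. Your outline has the logical order reversed and is missing all of (a)--(d).
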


Next, we consider the general case of $(X, \omega_0)$ with canonical bundle $K_X$ semiample and $0<\kappa(X) < \dim X$.  As above, we have a surjective holomorphic map $\pi : X \rightarrow B$ with connected fibers (i.e. a fiber space), but now $B$ is only an irreducible normal projective variety (possibly singular).  In addition, $\pi$ may have critical values.  If $S'\subset B$ denotes the singular set of $B$ together with the set of critical values of $\pi$, and we define $S=\pi^{-1}(S')$, then $S'$ is a proper analytic subvariety of $B$, $S$ is a proper analytic subvariety of $X$, and $\pi:X\backslash S\to B\backslash S'$ is a submersion between smooth manifolds. Therefore all fibers $X_y=\pi^{-1}(y)$ with $y\in B\backslash S'$ are smooth $n$-folds (all diffeomorphic to each other) with torsion canonical bundle.

As shown in \cite[Theorem 3.1]{ST2}, there is a smooth K\"ahler metric $\omega_B$ on $B \backslash S'$ satisfying the twisted K\"ahler-Einstein equation
$$\Ric(\omega_B) = - \omega_B + \omega_{\textrm{WP}}, \quad \textrm{on } B \backslash S',$$
for $\omega_{\textrm{WP}}$ the (smooth) Weil-Petersson form on $B \backslash S'$.  For $y \in B\backslash S'$,  let $\omega_{\textrm{SRF},y}$ be the unique Ricci-flat K\"ahler  metric on the fiber $X_y$ cohomologous to $\omega_0|_{X_y}$.

Our result is that, in this more general setting, we obtain metric collapsing of the normalized K\"ahler-Ricci flow $\omega(t)$ to $\omega_B$ on compact subsets away from the singular set $S$.

\begin{theorem} \label{mainthm2}
Let $\omega=\omega(t)$ be the solution of the K\"ahler-Ricci flow (\ref{1krfl}) on $X$.  Then the following hold:
\begin{enumerate}
\item[(i)] For each compact subset $K \subset X \backslash S$,
\begin{equation} \label{mt2i}
\| \omega - \omega_B \|_{C^0(K, \omega_0)} \rightarrow 0, \quad \textrm{as } t \rightarrow \infty.
\end{equation}
\item[(ii)] The rescaled metrics $e^t \omega|_{X_y}$ converge  to $\omega_{\emph{SRF},y}$ on each fiber $X_y$ uniformly in $y$ for compact subsets of $B \backslash S'$.  Namely, for each compact set $K' \subset B \backslash S'$
\begin{equation} \label{mt2ii}
\sup_{y \in K'} \| e^t \omega|_{X_y} - \omega_{\emph{SRF},y} \|_{C^0(X_y, \omega_0|_{X_y})} \rightarrow 0, \quad \textrm{as } t \rightarrow \infty.
\end{equation}
Moreover, if we fix $\alpha \in (0,1)$ then for each $y \in B \setminus S'$, $e^t \omega|_{X_y}$ converges to $\omega_{\emph{SRF},y}$ in $C^{\alpha}(X_y, \omega_0|_{X_y})$.
\end{enumerate}
\end{theorem}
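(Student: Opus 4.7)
My approach is to localize the strategy used for Theorem \ref{mainthm1} to compact subsets of $X\setminus S$, employing barrier functions built from a holomorphic section cutting out $S$ in order to make the maximum-principle arguments robust against the degeneration of the reference metrics near the singular locus.

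Fix an ample line bundle $L$ on $B$ with smooth Hermitian metric $h$ and a holomorphic section $\sigma\in H^0(B,L^N)$ (for $N$ large) whose zero locus is exactly $S'$. The function $-\log|\pi^*\sigma|^2_{\pi^*h^N}$ tends to $+\infty$ on $S$ while its $\ddbar$ is bounded below on $X$ by $-C\omega_0$, so multiplying by a small $\ve>0$ yields barriers that confine any maximum-principle argument inside $X\setminus S$. On $X\setminus S$, the twisted K\"ahler-Einstein metric $\omega_B$ from \cite[Theorem 3.1]{ST2} and the semi Ricci-flat form $\of$ are smooth, and the reference form $\ti\omega_t=\pi^*\omega_B+e^{-t}\of$ is well defined there. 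Writing $\omega(t)=\ti\omega_t+\ddbar\vp(t)+\eta_t$ for a smooth closed cohomological correction $\eta_t$ on $X$ that decays in $t$, the K\"ahler-Ricci flow reduces to a parabolic complex Monge-Amp\`ere equation for $\vp$, and the global $C^0$ bound on $\vp$ from \cite{ST2, To} is the starting point.

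The core step is to upgrade this to a two-sided comparison $C^{-1}\ti\omega_t\le\omega\le C\ti\omega_t$ on each compact $K\subset X\setminus S$, with $C=C(K)$ independent of $t$. For the upper bound I would apply the maximum principle on $X$ to a quantity of the form $\log\tr{\ti\omega_t}{\omega}-A\vp+\ve\log|\pi^*\sigma|^2_{\pi^*h^N}$ for $A$ large: the barrier forces the maximum off $S$, while the usual Chern-Lu/Schwarz-lemma computation goes through because the bisectional curvature of $\ti\omega_t$ is controlled by $\omega_B$ being K\"ahler-Einstein modulo Weil-Petersson and $\of$ being fiberwise Ricci-flat. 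Sending $\ve\to 0$ gives the bound on $\{|\pi^*\sigma|^2\ge\delta\}$ for every $\delta>0$. A parallel argument applied to $\log\tr{\omega}{\ti\omega_t}$, or a Yau-type third-order computation coupled with the evolution of $\dot\vp$, furnishes the matching lower bound.

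Once the two-sided comparison is established, both conclusions of Theorem \ref{mainthm2} follow by direct adaptation of the arguments behind Theorem \ref{mainthm1}: part (i) by combining the known convergence of $\vp$ and $\dot\vp$ from \cite{ST2} with the metric equivalence, and part (ii) by rescaling $\omega|_{X_y}$ by $e^t$, extracting limits via Arzel\`a-Ascoli from the uniform bound, and invoking uniqueness of the Ricci-flat metric in the K\"ahler class $[\omega_0|_{X_y}]$; the $C^\alpha$ statement then follows from a fiberwise parabolic Evans-Krylov-type estimate. The main obstacle is precisely the second-order comparison in the presence of the Weil-Petersson term and the singular reference geometry: the barrier $\sigma$ is essential, but it inevitably introduces constants that blow up as one approaches $S$, which is why the exponential rate of Theorem \ref{mainthm1} cannot be retained in this generality.
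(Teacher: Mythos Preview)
Your overall architecture—use a barrier vanishing on $S$ to localize, obtain the two-sided comparison $C^{-1}\hat\omega\le\omega\le C\hat\omega$ on compact sets of $X\setminus S$, then mimic Theorem~\ref{mainthm1}—matches the paper's. The two-sided comparison you propose is exactly Lemma~\ref{lemmape2}(i), already supplied by \cite{FZ}, so that part is fine (though note the paper works with the globally smooth reference $\hat\omega=e^{-t}\omega_0+(1-e^{-t})\chi$ rather than $\ti\omega_t=\omega_B+e^{-t}\of$, precisely because $\omega_B$ and $\of$ are only smooth off $S$; your Schwarz-lemma computation with respect to $\ti\omega_t$ would need curvature control on $\ti\omega_t$ that is not obviously available near $S$).

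The substantive gap is in ``both conclusions follow by direct adaptation of the arguments behind Theorem~\ref{mainthm1}.'' That sentence hides essentially all of the new content. First, part~(i) does \emph{not} follow from metric equivalence together with convergence of $\vp$ and $\dot\vp$: knowing $\omega\sim\hat\omega$ and $\vp\to v$ gives nothing like $\|\omega-\omega_B\|_{C^0(K)}\to 0$. What is actually needed is the sharp trace estimate $\sup_K(\tr{\omega}{\omega_B}-m)\le F(t)\to 0$ (Lemma~\ref{con22}), whose proof is genuinely more delicate here than Lemma~\ref{1con}: since $\sigma^\lambda(\vp+\dot\vp-v)$ only decays like some unspecified $h(t)\to 0$ rather than exponentially, one must pair the Schwarz-lemma term with a carefully chosen growing weight $\ell(t)$ satisfying $\ell(t)\le h(t)^{-1/2}$ and $\ell(t)\le -1/h'(t)$ to make the maximum principle close. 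This is the step where the exponential rate is lost, and it is not a routine modification.

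Second, your argument for part~(ii)—Arzel\`a--Ascoli plus uniqueness of the fiberwise Ricci-flat metric—yields only the last sentence of (ii), the $C^\alpha$ convergence for \emph{fixed} $y$. It does not give the uniform-in-$y$ statement \eqref{mt2ii}, because you have no compactness in the $y$-direction. The paper obtains uniformity via two applications of the elementary Lemma~\ref{1conve}: first to show $(e^t\omega|_{X_y})^n/\omega_{\mathrm{SRF},y}^n\to 1$ uniformly on $K$, then to show $\tr{\omega_{\mathrm{SRF},y}}{(e^t\omega|_{X_y})}\to n$ uniformly, and finally the linear-algebra Lemma~\ref{lemmamatrix} converts these scalar estimates into $\|e^t\omega|_{X_y}-\omega_{\mathrm{SRF},y}\|_{C^0}\to 0$. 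The same pair of lemmas is what closes part~(i). Your outline does not mention either ingredient, and without them the passage from equivalence of metrics to $C^0$ convergence of metrics is unjustified.
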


An interesting, but still open, question is whether the convergence results of Theorems \ref{mainthm1} and \ref{mainthm2} hold with the $C^0$ norms replaced by the $C^{\infty}$ norms.  This was conjectured by Song-Tian \cite{ST} in the case of elliptic surfaces.
Moreover, one expects the convergence to be always exponentially fast. Smooth convergence has indeed been proved by Fong-Zhang \cite{FZ} in the case when the smooth fibers are tori, adapting the techniques of Gross-Tosatti-Zhang \cite{GTZ} for Calabi-Yau metrics.

\subsection{Collapsing of Ricci-flat metrics}  \label{introcy}

Let $X$ be a compact K\"ahler $(n+m)$-manifold with $c_1(X)=0$ in $H^2(X,\mathbb{R})$ (i.e. a Calabi-Yau manifold), and let $\omega_X$ be a Ricci-flat K\"ahler metric on $X$. Suppose that we have a holomorphic map $\pi:X\to Z$ with connected fibers, where $(Z,\omega_Z)$ is a compact K\"ahler manifold, with image $B=\pi(X)\subset Z$ an irreducible normal subvariety of $Z$ of dimension $m>0$. Then the induced surjective map $\pi:X\to B$ is a fiber space, and if $S'\subset B$ denotes the singular set of $B$ together with the set of critical values of $\pi$, and $S=\pi^{-1}(S')$, then $S'$ is a proper analytic subvariety of $B$, $S$ is a proper analytic subvariety of $X$, and $\pi:X\backslash S\to B\backslash S'$ is a submersion between smooth manifolds.  This is the same setup as in \cite{To, GTZ, GTZ2,HT}. The manifold $Z$ here plays only an auxiliary role, and in practice there are two natural choices for $Z$, namely $Z=B$ if $B$ is smooth, or $Z=\mathbb{CP}^N$ if $B$ is a projective variety (like in the setup of the K\"ahler-Ricci flow).

The fibers $X_y=\pi^{-1}(y)$ with $y\in B\backslash S'$ are smooth Calabi-Yau $n$-folds (all diffeomorphic to each other).  As in the previous section, we write $\omega_{\textrm{SRF},y}$ for the unique Ricci-flat K\"ahler metric on $X_y$ cohomologous to $\omega_X|_{X_y}$, for $y \in B \backslash S'$.

Write $\chi=\pi^*\omega_Z$, which is a smooth nonnegative $(1,1)$ form on $X$.  Then the cohomology class
$$\alpha_t = [\chi] + e^{-t} [\omega_X], \quad 0 \le t < \infty,$$
is K\"ahler.  By Yau's Theorem \cite{Ya} there exists a unique Ricci-flat K\"ahler metric  $\omega=\omega(t) \in \alpha_t$ on $X$.  On the other hand, the class $[\chi]$ is not K\"ahler and hence the family of metrics $\omega(t)$ must degenerate as $t\rightarrow \infty$.

Note that in \cite{To, GTZ, GTZ2,HT} the class $\alpha_t$ was replaced by $ [\chi] + t[\omega_X], 0<t\leq 1.$ This is of course only a cosmetic difference, which we have made to highlight the similarity between this setup and the one considered above for the K\"ahler-Ricci flow.

There is a K\"ahler metric $\omega_B = \chi + \ddbar v$ on $B \backslash S'$, for $v$ a unique solution of a certain complex Monge-Amp\`ere equation (see \eqref{eqnvcy}) such that
$$\Ric(\omega_B) = \omega_{\textrm{WP}}\quad \textrm{on } B \backslash S',$$
where $\omega_{\textrm{WP}}$ is the Weil-Petersson form.

We prove that  the Ricci-flat metrics $\omega \in \alpha_t$ collapse as $t \rightarrow \infty$.  Note that the limiting behavior of these metrics is similar to  solutions of the K\"ahler-Ricci flow in Theorem \ref{mainthm2} above.

\begin{theorem}  \label{mainthm3} Let $\omega=\omega(t) \in \alpha_t$ be Ricci-flat K\"ahler metrics on $X$ as described above.  Then the following hold:
\begin{enumerate}
\item[(i)] For each compact set $K \subset X \backslash S$,
\begin{equation}\label{expconv2}
\|\omega-\omega_B\|_{C^0(K,\omega_0)} \rightarrow 0, \quad \textrm{as } t\rightarrow \infty.
\end{equation}
\item[(ii)] The rescaled metrics $e^t \omega|_{X_y}$ converge to $\omega_{\emph{SRF},y}$ on each fiber $X_y$ uniformly in $y$ for compact subsets of $B \backslash S'$.  Namely, for each compact set $K' \subset B\backslash S'$,
\begin{equation}\label{expconv}
\sup_{y \in K'} \|e^{t}\omega|_{X_y}-\omega_{{\rm SRF},y}\|_{C^0(X_y,\omega_0|_{X_y})} \rightarrow 0, \quad \textrm{as } t\rightarrow \infty.
\end{equation}
Furthermore, if we fix $\alpha \in (0,1)$ then for each $y\in B\backslash S'$,
$e^{t}\omega|_{X_y}$ converges to $\omega_{{\rm SRF},y}$ in $C^\alpha(X_y,\omega_0|_{X_y})$.
\end{enumerate}
\end{theorem}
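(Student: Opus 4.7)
The plan is to mirror the proof of Theorem \ref{mainthm2} above, exploiting the simplifications of the elliptic setting (no parabolic smoothing needed, and each $\omega(t)$ is genuinely Ricci-flat). Write each Ricci-flat metric as $\omega(t) = \chi + e^{-t}\omega_X + \ddbar\varphi_t$, so $\varphi_t$ solves the complex Monge-Amp\`ere equation
\begin{equation*}
(\chi + e^{-t}\omega_X + \ddbar\varphi_t)^{n+m} = c_t\, e^{-nt}\, \Omega,
\end{equation*}
where $\Omega$ is a smooth Ricci-flat volume form on $X$ (which exists since $c_1(X)=0$) and $c_t>0$ is a cohomological constant with positive finite limit. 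From \cite{To} I may assume a uniform $L^\infty$ bound on the normalized potentials $\varphi_t$ and $C^{1,\alpha}$ convergence of $\varphi_t$ to the solution $v$ of the equation defining $\omega_B$. On $X\setminus S$, construct the \emph{semi Ricci-flat form} $\of := \omega_X + \ddbar\rho$, where $\rho$ is the unique smooth function such that $\of$ restricts to $\omega_{\mathrm{SRF},y}$ on each fiber $X_y$, $y\in B\setminus S'$, subject to a fiberwise normalization.

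The main new ingredient is a $C^0$ comparison of $\omega(t)$ to the model form $\hat\omega_t := \pi^*\omega_B + e^{-t}\of$ on any compact $K\subset X\setminus S$. Specifically, I would aim for
\begin{equation*}
\mathrm{tr}_{\hat\omega_t}\omega(t) \le (n+m) + o(1) \quad \text{as } t\to\infty,
\end{equation*}
uniformly on $K$. This is obtained via the maximum principle applied to an auxiliary function of the shape $\log\mathrm{tr}_{\hat\omega_t}\omega - A\varphi_t + (\text{correction})$, where the correction term involves a smooth cutoff supported in $X\setminus S$ and $A>0$ is chosen large enough to dominate the bisectional curvature terms of $\hat\omega_t$. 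Combined with the Monge-Amp\`ere equation, which forces $\omega(t)^{n+m}/\hat\omega_t^{n+m} \to 1$ on $K$, an elementary linear algebra lemma then yields $\|\omega(t) - \hat\omega_t\|_{C^0(K,\omega_0)}\to 0$. Since $\hat\omega_t - \pi^*\omega_B = e^{-t}\of \to 0$ in $C^0(K,\omega_0)$ (the semi Ricci-flat form being smooth on $X\setminus S$), part (i) follows.

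For part (ii), restrict to a fiber $X_y$, $y\in B\setminus S'$: the rescaled metrics $e^t \omega(t)|_{X_y}$ lie in the fixed K\"ahler class $[\omega_X|_{X_y}]$, with potentials uniformly bounded in $C^0(X_y,\omega_0|_{X_y})$, uniformly in $y$ on compact subsets of $B\setminus S'$. Restricting the Monge-Amp\`ere equation to $X_y$ and using part (i) to control the horizontal components of $\omega(t)$, the volume forms of $e^t\omega(t)|_{X_y}$ converge to that of $\omega_{\mathrm{SRF},y}$, uniformly in $y$ on compact $K'\subset B\setminus S'$. Uniqueness of the Ricci-flat representative in a K\"ahler class, together with $L^\infty$ stability of the Monge-Amp\`ere operator, then gives the claimed $C^0$ convergence uniformly in $y$ on $K'$. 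The $C^\alpha$ refinement at a fixed fiber follows by interpolation with the $C^{1,\alpha}$ bound on potentials from \cite{To}.

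The hard part is the trace estimate in the second paragraph. The model form $\hat\omega_t$ collapses in the fiber directions, so its bisectional curvature grows like $e^t$ there, and absorbing this in a maximum principle argument requires a very precise choice of auxiliary quantity and careful use of both the $C^0$ bound on $\varphi_t$ and a Calabi-type third-order estimate for $\omega(t)$. The presence of singular fibers forces work with cutoffs on $X\setminus S$, introducing error terms from their derivatives that must also be absorbed.
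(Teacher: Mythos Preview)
Your outline has the right endgame (match $\omega$ to the model $\hat\omega_t=\omega_B+e^{-t}\of$ via a sharp trace estimate plus the volume-form ratio, then invoke the matrix lemma), but the central step --- obtaining $\tr{\hat\omega_t}{\omega}\le (n+m)+o(1)$ directly by a maximum principle on $\log\tr{\hat\omega_t}{\omega}-A\varphi_t+\cdots$ --- does not work as written, and the paper takes a genuinely different route.

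The obstruction you yourself flag is fatal for your proposed quantity: the bisectional curvature of $\hat\omega_t$ in the fiber directions is of order $e^t$, so the standard Aubin--Yau computation forces $A\sim e^t$; since you only have $|\varphi_t|\le C$ (not $\varphi_t\to 0$), the resulting inequality gives at best a uniform bound on $\tr{\hat\omega_t}{\omega}$ (already in \cite{To}), never the sharp $(n+m)+o(1)$. No ``very precise auxiliary quantity'' of this type is known to absorb an $e^t$ curvature term while producing an $o(1)$ conclusion; the Calabi third-order estimate does not help here because it controls $|\nabla^E g|$, not the defect $\tr{\hat\omega_t}{\omega}-(n+m)$.

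The paper instead avoids the collapsing reference metric entirely in the trace step. It works with $\tr{\omega}{\omega_B}$ (only the base direction, where the curvature of $\omega_B$ is bounded on compact sets), and uses the elliptic identity
\[
\Delta_\omega(\varphi+\dot\varphi-v)=m-\tr{\omega}{\omega_B}+(\log c_t)',
\]
obtained by differentiating the Monge--Amp\`ere equation in $t$. The new analytic ingredient, which your proposal does not mention, is a uniform bound on $\dot\varphi$ (and an upper bound on $\ddot\varphi$): since $|\Delta_\omega(\varphi+\dot\varphi)|\le C$, one uses the diameter bound for $(X,\omega)$ from \cite{To2,ZT} together with the resulting lower bound on the Green's function of $\omega$ to get $|\varphi+\dot\varphi|\le C$. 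From this one shows $\mathcal{F}|\varphi+\dot\varphi-v|\to 0$, and then a maximum principle on a quantity built from $\tr{\omega}{\omega_B}$ and $(\varphi+\dot\varphi-v)$ (with a cutoff $\tilde{\mathcal{F}}$ vanishing on $S$) yields $\sup_K(\tr{\omega}{\omega_B}-m)\to 0$.

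Finally, the logical order is the reverse of yours: the paper proves part (ii) first and uses it to finish part (i). The local Calabi estimate gives a fiber $C^1$ bound on $e^t\omega|_{X_y}$; combined with the trace estimate on $\omega_B$ and the integral-zero/gradient-bound lemma (Lemma~\ref{1conve}), this yields fiberwise volume-form convergence, then uniform $C^0$ fiber convergence $e^t\omega|_{X_y}\to\omega_{\mathrm{SRF},y}$. Only then does one assemble $\tr{\omega}{\hat\omega_t}\le(n+m)+o(1)$ by adding the base piece $\tr{\omega}{\omega_B}\le m+o(1)$ to the fiber piece $\tr{\omega}{(e^{-t}\omega_{\mathrm{SRF},y})}\le n+o(1)$ coming from (ii), and conclude (i) via the matrix lemma. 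Your plan to deduce (ii) from (i) by ``restricting the Monge--Amp\`ere equation to $X_y$'' is therefore circular relative to how the argument actually closes.
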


The statement that $e^{t}\omega|_{X_y}\to\omega_{{\rm SRF},y}$ (which can be improved to the smooth topology \cite{ToZ2}) solves \cite[Question 4.1]{To3} and \cite[Question 3]{To4}.
As in the case of the K\"ahler-Ricci flow, it remains an interesting open question (cf. \cite[Question 4.2]{To3}, \cite[Question 4]{To4}) whether the collapsing in \eqref{expconv2} is in the smooth topology.

Theorem \ref{mainthm3} was first proved by Gross-Wilson \cite{GW} in the case when $X$ is a $K3$ surface, $B$ is $\mathbb{CP}^1$ and the only singular fibers of $\pi$ are of type $I_1$. In fact they also proved that the collapsing in \eqref{expconv2} is in the smooth topology. Their proof is of a completely different flavor, constructing explicit almost Ricci-flat metrics on $X$ by gluing $\omega_B+e^{-t}\of$ away from $S$ with a suitable model metric near the singularities, and then perturbing this to the honest Ricci-flat metrics $\omega(t)$ (for $t$ large). In higher dimensions when there are no singular fibers ($S=\emptyset$) a similar perturbation result was obtained by Fine \cite{Fi}. However it seems hopeless to pursue this strategy in higher dimensions when there are singular fibers, since in general there is no good local model metric near the singularities. Note that such fibrations with $S=\emptyset$ are very special, thanks to the results in \cite{ToZ}.

In \cite[Theorem 7.1]{ST} Song-Tian took a different approach, and by proving a priori estimates for the Ricci-flat metrics $\omega(t)$ they showed that they collapse to $\omega_B$ in the $C^{1,\alpha}$ topology of K\"ahler potentials ($0<\alpha<1$), in the case when $X$ is $K3$ and $B$ is $\mathbb{CP}^1$. This was generalized to all dimensions by the first-named author in \cite{To}, who also proved that the fibers shrink in the $C^1$ topology of metrics.

When $X$ is projective and the smooth fibers $X_y$ are tori, Theorem \ref{mainthm3} was proved by Gross-Tosatti-Zhang in \cite{GTZ}, together with the improvement of \eqref{expconv2} to the smooth topology. The projectivity assumption was recently removed in \cite{HT}. In this case (and only in this case) the Ricci-flat metrics also have locally uniformly bounded sectional curvature on $X\backslash S$.
The assumption that the smooth fibers are tori is used crucially to derive higher-order estimates for the pullback of $\omega$ to the universal cover of $\pi^{-1}(U)$, which is biholomorphic to $U\times\mathbb{C}^n$, where $U$ is a small ball in $B\backslash S'$. It is unclear how to obtain similar estimates in our general setup.

In the setup of Theorem \ref{mainthm3} we can also derive some consequences about Gromov-Hausdorff limits of the Calabi-Yau metrics $\omega(t)$. In fact, given the uniform convergence proved in \eqref{expconv2}, we can directly apply existing arguments in the literature. For example, using the arguments in \cite[Theorem 1.2]{GTZ} we obtain the following corollary.
\begin{corollary}\label{corgh}
Assume the same setup as above, and let  $(X_{\infty},d_{\infty})$ be a metric space which is the Gromov-Hausdorff limit of $(X,\omega(t_i))$ for some sequence $t_i\to\infty$. Then there is an open dense subset  $X_0\subset X_{\infty}$ and
a homeomorphism  $\phi:B\backslash S'\to X_0$ which is a local isometry between $(B\backslash S',\omega_B)$ and $(X_0,d_{\infty})$. In particular, $(X_{\infty},d_{\infty})$ has an open dense subset which is a smooth Riemannian manifold.
\end{corollary}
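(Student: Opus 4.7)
The plan is to follow the strategy of \cite[Theorem 1.2]{GTZ}, treating the $C^0$ convergence $\omega(t)\to\pi^*\omega_B$ on compact subsets of $X\backslash S$ provided by Theorem \ref{mainthm3}(i), together with the fiber collapse in Theorem \ref{mainthm3}(ii), as the only input from the PDE side. Let $(\iota_i:X\to X_\infty)$ be approximate isometries realizing the Gromov-Hausdorff convergence $(X,\omega(t_i))\to(X_\infty,d_\infty)$. For $y\in B\backslash S'$ set
$$\phi(y):=\lim_{i\to\infty}\iota_i(x),\quad \textrm{for any }x\in X_y,$$
passing to a diagonal subsequence. Part (ii) of Theorem \ref{mainthm3} implies that the $\omega(t_i)$-diameter of $X_y$ tends to zero, locally uniformly in $y\in B\backslash S'$, so the limit exists and is independent of the choice of $x\in X_y$, giving a well-defined map on $B\backslash S'$.

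The key step is to show that $\phi$ is a local isometry from $(B\backslash S',\omega_B)$ onto its image $X_0:=\phi(B\backslash S')$. Given nearby $y,y'\in B\backslash S'$ contained in some compact $K'\subset B\backslash S'$, the $\omega_B$-geodesic joining them stays in $K'$ and lifts horizontally via the submersion $\pi$ to a smooth curve in $\pi^{-1}(K')\subset X\backslash S$. By Theorem \ref{mainthm3}(i), the $\omega(t_i)$-length of this lift converges to $d_{\omega_B}(y,y')$, yielding $d_\infty(\phi(y),\phi(y'))\le d_{\omega_B}(y,y')$. For the opposite inequality, one takes an almost-minimizing $\omega(t_i)$-curve joining $x\in X_y$ to $x'\in X_{y'}$; provided $y,y'$ lie deep inside $K'$ and $i$ is large, a volume/diameter argument as in \cite{GTZ} shows that this curve remains inside $\pi^{-1}(K')$, and then the bound $\omega(t_i)\ge(1-\ve_i)\pi^*\omega_B$ on $\pi^{-1}(K')$ forces its length to dominate $d_{\omega_B}(y,y')$ in the limit.

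With the local isometry established, $\phi$ is injective on small balls, and a standard covering argument shows it is a homeomorphism onto $X_0$, which is open in $X_\infty$. Density of $X_0$ follows by a diameter/volume argument: any $z=\lim_i \iota_i(x_i)$ with $x_i\in X$ is an $\omega(t_i)$-limit of points in $\pi^{-1}(B\backslash S')$, since the total $\omega(t_i)$-volume of a tubular neighborhood of $S$ tends to zero faster than the volume of $X$, and hence $z$ is a limit of points $\phi(y_i)$. The main obstacle is the reverse length inequality in the local isometry step: one has no $C^0$ control on $\omega(t_i)$ near $S$, so an a priori $\omega(t_i)$-geodesic could in principle cut through $\pi^{-1}(S')$ and shortcut. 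This is precisely where the fiber-collapse bound of Theorem \ref{mainthm3}(ii) and the codimension of $S$ are used, exactly as in \cite{GTZ}, to rule out such shortcuts and confine almost-minimizing geodesics to $X\backslash S$.
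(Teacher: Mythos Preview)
Your proposal is correct and follows exactly the paper's approach: the paper does not give an independent proof of Corollary \ref{corgh}, but simply notes that once the uniform $C^0$ convergence of Theorem \ref{mainthm3}(i) is in hand, the argument of \cite[Theorem 1.2]{GTZ} applies verbatim. Your sketch accurately outlines that argument and correctly identifies the reverse length inequality (ruling out shortcuts through $S$) as the delicate point handled in \cite{GTZ}.
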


Similarly, the arguments in \cite[Theorem 1.1]{GTZ2} give:
\begin{corollary}
Assume furthermore that $X$ is projective and $B$ is a smooth Riemann surface. Then $(X,\omega(t))$ have a Gromov-Hausdorff limit $(X_{\infty},d_{\infty})$ as $t\to\infty$, which is isometric to the metric
completion of $(B\backslash S',\omega_B)$ and  $X_{\infty}$ is homeomorphic to $B$. Lastly, $X_{\infty} \backslash X_0$ is a finite number of points.
\end{corollary}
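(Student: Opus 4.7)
The plan is to adapt the approach of \cite[Theorem 1.1]{GTZ2} to the present setting. Since $B$ is a smooth compact Riemann surface, $S'\subset B$ is a finite set of points. The first step is to establish a uniform diameter bound $\diam(X,\omega(t))\leq D$. This combines two ingredients: the fiberwise collapse rate $\diam(X_y,\omega(t))\leq Ce^{-t/2}$ coming from Theorem \ref{mainthm3}(ii), together with a diameter bound $\diam(B\backslash S',\omega_B)<\infty$ obtained by local analysis of the twisted K\"ahler-Einstein potential $v$ near each point of $S'$. With this bound in hand, Gromov's precompactness theorem guarantees that the family $(X,\omega(t))$ is relatively compact in the Gromov-Hausdorff topology.

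Next, I would show that every subsequential Gromov-Hausdorff limit is isometric to the metric completion of $(B\backslash S',\omega_B)$. Given any sequence $t_i\to\infty$ along which $(X,\omega(t_i))\to(X_{\infty},d_{\infty})$, Corollary \ref{corgh} yields an open dense subset $X_0\subset X_{\infty}$ and a homeomorphism $\phi:(B\backslash S',\omega_B)\to(X_0,d_{\infty})$ which is a local isometry. Because $\phi$ is distance-preserving between the intrinsic metrics, it extends by uniform continuity to a distance-preserving map $\overline{\phi}$ from the metric completion of $(B\backslash S',\omega_B)$ into $X_{\infty}$. The image is complete, hence closed, and contains the dense set $X_0$, so it equals $X_{\infty}$; thus $\overline{\phi}$ is a surjective isometry. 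Since the limit is determined intrinsically by $(B\backslash S',\omega_B)$, it is independent of the subsequence, and therefore the full Gromov-Hausdorff limit exists as $t\to\infty$ with the claimed isometry type.

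The final, and most delicate, step is to show that the metric completion of $(B\backslash S',\omega_B)$ is homeomorphic to $B$ via the identity on $B\backslash S'$, with $X_{\infty}\backslash X_0$ consisting of exactly one point for each $p\in S'$. This reduces to two local statements near each $p\in S'$: (a) any sequence $q_k\to p$ in the complex topology is $\omega_B$-Cauchy (giving an upper bound on $\omega_B$-distance to $p$), and (b) any $\omega_B$-Cauchy sequence staying a definite complex-topology distance away from $S'$ has a limit in $B\backslash S'$. Both statements reduce to controlled asymptotic estimates for $v$ near each puncture, and this is where the main obstacle lies. The assumption $\dim_{\mathbb{C}}B=1$ makes this tractable: near each $p\in S'$ the equation for $v$ is a complex Monge-Amp\`ere equation on a disk with singular but explicit right-hand side, and pluripotential-theoretic arguments applied in that local model yield the required bounds, completing the identification of $X_{\infty}$ with $B$ as topological spaces.
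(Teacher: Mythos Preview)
Your overall strategy matches the paper's, which simply invokes the arguments of \cite[Theorem~1.1]{GTZ2} once the $C^0$ collapsing of Theorem~\ref{mainthm3} is in hand; there is no separate proof in the paper beyond that citation. Two points in your outline deserve tightening, however.

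First, your route to the uniform diameter bound is both unnecessary and incomplete: Theorem~\ref{mainthm3}(ii) only controls fibers over compact subsets of $B\backslash S'$, so it says nothing about the diameter contribution of the region near the singular fibers. The diameter bound you need is already available directly from \cite{To2,ZT} (as used in the proof of Lemma~\ref{c0lem}), valid for any family of Ricci-flat K\"ahler metrics with bounded volume ratio, with no reference to the fibration. Second, in your step~2 the claim that the local isometry $\phi$ extends to a surjective \emph{isometry} $\overline{\phi}$ onto $(X_\infty,d_\infty)$ is not automatic from what you wrote. A local Riemannian isometry onto $X_0$ is $1$-Lipschitz for the ambient metric $d_\infty$ (since $d_\infty\le d_{\mathrm{intrinsic}}$ on $X_0$), so you do get a $1$-Lipschitz surjection onto $X_\infty$ by density and compactness; but the reverse Lipschitz inequality --- ruling out $d_\infty$-shortcuts through $X_\infty\backslash X_0$ --- requires precisely the local control of $\omega_B$ near each $p\in S'$ that you defer to step~3. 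In \cite{GTZ2} these two steps are intertwined rather than sequential: the singular-point analysis feeds back into the identification of $d_\infty$ with the length metric of $\omega_B$ on the regular part.
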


This answers questions in \cite{GTZ2, To3, To4}, in this setup.\\

\subsection{Remarks on the proofs}  We end the introduction by describing briefly some  key elements in the proofs.  For simplicity of the discussion, suppose that $X$ is a product $X= E \times B$ with $\omega_E$ a Ricci-flat metric on $E$ and $\omega_B$ a K\"ahler-Einstein metric on $B$, and assume that $\omega_0$ is cohomologous to the product metric.  Write $\tilde{\omega} = \omega_B +e^{-t} \omega_E$.  It is already known \cite{FZ} that $\omega$ and $\tilde{\omega}$ are uniformly equivalent.  Our goal is to show that $\| \omega -\tilde{\omega}\|_{C^0(X,\omega_0)}$ converges to zero.  We first show that (cf. Lemma \ref{lemmaknown1}.(iv))
\begin{equation} \label{step1}
\frac{\omega^{n+m}}{\tilde{\omega}^{n+m}} \rightarrow 1, \quad \textrm{as } t\rightarrow \infty,
\end{equation}
exponentially fast.  In fact this statement is easily derived from existing arguments in the literature  \cite{ST, ST2, SW, ST4, TWY}.

The next step is the estimate (Lemma \ref{1con})
\begin{equation} \label{step2}
\tr{\omega}{\omega_B} - m \le Ce^{-\eta t},
\end{equation}
for uniform positive constants $C$ and $\eta$.  This uses a maximum principle argument similar to that in the authors' recent work \cite[Proposition 7.3]{TWY} on the Chern-Ricci flow \cite{Gi0,TW}.  Indeed, \cite{TWY} established in particular exponential convergence of solutions to the K\"ahler-Ricci flow on elliptic surfaces and, as we show here, these ideas can be applied here for the case of more general Calabi-Yau fibers.

Finally, we have a Calabi-type estimate in the fibers $X_y$ for $y \in B$  (Lemma \ref{fiberC1}) which gives
\begin{equation} \label{step3}
\| e^t\omega|_{X_y} \|_{C^1(X_y, \omega_0|_{X_y})} \le C.
\end{equation}

Then the exponential convergence of $\omega$ to $\omega_B$, and the other results of
Theorem \ref{mainthm1} are   almost a formal consequence of the three estimates (\ref{step1}), (\ref{step2}), (\ref{step3}).  When $X$ is a  holomorphic submersion rather than a product, the analogous estimates still hold.

When $\pi:X \rightarrow B$ has singular fibers then there are a number of technical complications and in particular we are not able to obtain exponential convergence (see Lemma \ref{lemmape2}.(iv)).    For the case of collapsing Ricci-flat K\"ahler metrics, there are even more difficulties.  A parabolic maximum principle argument  has to be replaced by a global argument  using the Green's function and the diameter bound of \cite{To2, ZT} (see Lemma \ref{c0lem}).  Nevertheless, the basic outlines of the proofs remain the same.

\bigskip

This paper is organized as follows. In Section \ref{sectionkrfsubmersion} we study the collapsing of the K\"ahler-Ricci flow on holomorphic submersions, and prove Theorem \ref{mainthm1}. In Section \ref{sectionkrfsing} we deal with the case with singular fibers and prove Theorem \ref{mainthm2}. The collapsing of Ricci-flat metrics on Calabi-Yau manifolds is discussed in Section \ref{sectioncy} where we give the proof of Theorem \ref{mainthm3}.\\

{\bf Acknowledgments. }We are grateful to the referee for useful comments.

\section{The K\"ahler-Ricci flow on a holomorphic submersion} \label{sectionkrfsubmersion}

In this section, we give a proof of Theorem \ref{mainthm1}.   We assume that $X$ satisfies assumption $(*)$ from the introduction.

\subsection{Holomorphic submersions} \label{subsectionsub}

We begin with some preliminary facts about submersions $\pi: X \rightarrow B$ satisfying $(*)$ as in the introduction.
The content of this section is essentially well-known (see e.g. \cite{ST, ST2, To, FZ}). We include this brief account for the sake of completeness.

Let $(X^{n+m},\omega_0)$ be a compact K\"ahler manifold which admits a submersion $\pi:X\to B$ onto a compact K\"ahler manifold $B^m$ with $c_1(B)<0$, and connected fibers
$X_y=\pi^{-1}(y)$ which are all Calabi-Yau $n$-manifolds.
Thanks to Yau's theorem \cite{Ya}, there is a smooth function $\rho_y$ on $X_y$ such that $\omega_0|_{X_y}+\ddbar \rho_y=\omega_{\textrm{SRF},y}$ is the unique
Ricci-flat K\"ahler metric on $X_y$ cohomologous to $\omega_0|_{X_y}$. If we normalize by $\int_{X_y}\rho_y\omega_0^n|_{X_y}=0$, then $\rho_y$ varies smoothly in $y$ and defines a smooth function $\rho$ on $X$ and we let
$$\of=\omega_0+\ddbar\rho.$$
This is called a semi Ricci-flat form, because it restricts to a Ricci-flat K\"ahler metric on each fiber $X_y$.  It was first introduced by Greene-Shapere-Vafa-Yau \cite{GSVY} in the context of elliptically fibered $K3$ surfaces (in which case it is a semiflat form).
Note that while $\of$ need not be positive definite, nevertheless for any K\"ahler metric $\chi$ on $B$ we have that $\of^n\wedge\pi^*\chi^m$ is a strictly positive volume form on $X$.
Now recall that the relative pluricanonical bundle of $\pi$ is $K_{X/B}^\ell=K_X^\ell\otimes (\pi^*K_B^\ell)^*$, where $\ell$ is any positive integer. Thanks to the projection formula, we have
$$\pi_* (K_X^\ell)=(\pi_* (K_{X/B}^\ell))\otimes K_B^\ell,$$
and when restricted to any fiber $X_y$, $K^\ell_{X/B}|_{X_y}\cong K^\ell_{X_y}$. The fact that $X_y$ is Calabi-Yau implies that there is a positive integer $\ell$ such that $K_{X_y}^\ell$ is trivial, for all $y\in B$, and from now on we fix such a value of $\ell$.
Then Grauert's theorem on direct images \cite[Theorem I.8.5]{bhpv} shows that
$$L:=\pi_* (K_{X/B}^\ell),$$
is a line bundle on $B$ . Since all the fibers of $\pi$ have trivial $K_{X_y}^\ell$, it follows
that $K_X^\ell\cong \pi^*\pi_*(K_X^\ell)$ (see \cite[Theorem V.12.1]{bhpv}).
We conclude that
\begin{equation}\label{canon}
K_X^\ell=\pi^*(K_B^\ell\otimes L).
\end{equation}
The line bundle $L= \pi_* (K_{X/B}^\ell)$ is Hermitian semipositive, and the Weil-Petersson form $\omega_{\mathrm{WP}}$ on $B$
is a semipositive representative of $\frac{1}{\ell}c_1(L)$. This is defined as follows.
Choose any local nonvanishing holomorphic section $\Psi_y$ of $\pi_* (K_{X/B}^\ell)$, i.e. a
family $\Psi_y$ of nonvanishing holomorphic $\ell$-pluricanonical forms on $X_y$, which vary holomorphically in $y$.
The forms $\Psi_y$ are defined on the whole fiber $X_y$, but only for $y$ in a small ball in $B$.
Then the Weil-Petersson $(1,1)$-form on this ball
$$\omega_{\mathrm{WP}}=-\mn\de_y\db_y\log \left((\mn)^{n^2}\int_{X_y} (\Psi_y\wedge\ov{\Psi}_y)^{\frac{1}{\ell}}\right),$$
is actually globally defined on $B$ (this is because the fibers have trivial $\ell$-pluricanonical bundle,
so different choices of $\Psi_y$ differ by multiplication by a local holomorphic function in $y$,
and so $\omega_{\mathrm{WP}}$ is globally defined) and is semipositive definite (this was originally proved
by Griffiths \cite{Gr}, and is a by now standard calculation in Hodge theory). One can view $\omega_{\mathrm{WP}}$ as the pullback of
the Weil-Petersson metric from the moduli space of polarized Calabi-Yau fibers of $\pi$, thanks to \cite{FS, Ti, Tod}.
From this construction, we see that
$\ell\omega_{\mathrm{WP}}$ is the curvature form of a singular $L^2$ metric on $\pi_* (K_{X/B}^\ell)$, so $[\omega_{\mathrm{WP}}]=\frac{1}{\ell}c_1(L)$.  Notice that here and henceforth, we omit the usual factor of $2\pi$ in the definition of the first Chern class.

From \eqref{canon} we see that $K_X^\ell$ is the pullback of an ample line bundle, hence $K_X$ is semiample.
It follows also that
\begin{equation}\label{c1}
c_1(X)=\pi^*c_1(B)-[\pi^*\omega_{\mathrm{WP}}].
\end{equation}
In our setup the class $-c_1(B)$ is K\"ahler, and therefore so is the class
$-c_1(B)+[\omega_{\mathrm{WP}}]$. Fix a K\"ahler metric $\chi$ in this class. From \eqref{c1} we see that
$-\pi^*\chi$ is cohomologous to $c_1(X)$, and so there exists a unique smooth positive volume form $\Omega_1$ on $X$ with
$$\ddbar \log (\Omega_1)=\chi, \quad \int_X \Omega_1=\binom{n+m}{n}\int_X \omega_0^n\wedge \chi^m,$$ where from now on we abbreviate $\pi^*\chi$ by $\chi$.

We now show the existence of a unique twisted K\"ahler-Einstein metric on $B$.
\begin{theorem}\label{genke}
There is a unique K\"ahler metric $\omega_B$ on $B$ which satisfies
\begin{equation}\label{gke}
\Ric(\omega_B)=-\omega_B+\omega_{\mathrm{WP}}.
\end{equation}
\end{theorem}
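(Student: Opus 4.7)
The plan is to reduce the twisted Kähler-Einstein equation \eqref{gke} to a scalar complex Monge-Ampère equation on $B$ and then invoke the standard Aubin-Yau theory in the negative first Chern class case. Since $\Ric(\omega_B) = -\omega_B + \omega_{\mathrm{WP}}$ forces $[\omega_B] = -c_1(B) + [\omega_{\mathrm{WP}}] = [\chi]$, any solution has the form $\omega_B = \chi + \ddbar \varphi$ for some smooth $\varphi : B \to \mathbb{R}$.

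First I would choose a smooth positive volume form $\Omega_B$ on $B$ with $\Ric(\Omega_B) = -\chi + \omega_{\mathrm{WP}}$, which exists and is unique up to a positive multiplicative constant: both sides represent $c_1(B)$ in $H^{1,1}(B,\mathbb{R})$, so the $\ddbar$-lemma produces $\Omega_B$. Using the global identity $\Ric(\omega_B) = \Ric(\Omega_B) - \ddbar \log(\omega_B^m/\Omega_B)$, a direct substitution shows that \eqref{gke} is equivalent to
\begin{equation*}
(\chi + \ddbar \varphi)^m = e^{\varphi}\,\Omega_B, \qquad \chi + \ddbar \varphi > 0,
\end{equation*}
after absorbing an additive constant into $\Omega_B$.

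Next I would solve this Monge-Ampère equation by the continuity method along the family $(\chi + \ddbar \varphi_s)^m = e^{\varphi_s + s h}\chi^m$, $s \in [0,1]$, where $h \in C^\infty(B)$ is defined by $\Omega_B = e^h \chi^m$. The trivial solution $\varphi_0 \equiv 0$ gives $s=0$. Openness at any solution $\varphi_s$ is immediate from the implicit function theorem in Hölder spaces, since the linearization is the operator $\Delta_{\omega_{\varphi_s}} - 1$, whose kernel on the compact manifold $B$ is trivial by the maximum principle. Closedness reduces to a priori estimates, and here the sign of $\varphi_s$ in the exponent is favorable: the $C^0$ bound is obtained directly by evaluating the equation at a maximum and a minimum of $\varphi_s$, and the second-order and higher-order estimates then follow from Yau's classical Laplacian estimate together with Evans-Krylov and Schauder bootstrapping. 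This yields a smooth $\varphi$ at $s=1$, and hence the desired $\omega_B$.

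For uniqueness, if $\varphi_1, \varphi_2$ both solve the Monge-Ampère equation, then at a maximum point of $\varphi_1 - \varphi_2$ one has $\omega_{\varphi_1}^m \le \omega_{\varphi_2}^m$, which forces $e^{\varphi_1} \le e^{\varphi_2}$ there, hence $\varphi_1 \le \varphi_2$ on all of $B$; exchanging the roles gives $\varphi_1 = \varphi_2$. The main obstacle is, honestly, minimal: the whole argument is a transcription of the negative-curvature Kähler-Einstein existence theorem of Aubin-Yau. The only point requiring a moment of thought is the cohomological reduction of \eqref{gke} to a scalar equation, which rests on the fact, already established in the excerpt, that $[\chi] = -c_1(B) + [\omega_{\mathrm{WP}}]$ is Kähler and that $\omega_{\mathrm{WP}}$ is smooth on all of $B$ in the submersion setting $(*)$.
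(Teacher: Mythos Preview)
Your proof is correct and follows the same overall strategy as the paper: reduce \eqref{gke} to a scalar Monge--Amp\`ere equation on $B$ of the form $(\chi+\ddbar\varphi)^m = e^{\varphi}\,\Omega_B$ with $\chi+\ddbar\varphi>0$, and then invoke Aubin--Yau. The only difference is in how the right-hand side volume form is produced. You obtain $\Omega_B$ abstractly from the $\ddbar$-lemma, using that $-\chi+\omega_{\mathrm{WP}}$ represents $c_1(B)$. The paper instead writes down an explicit function
\[
F=\frac{\Omega_1}{\binom{n+m}{n}\,\omega_{\mathrm{SRF}}^n\wedge\chi^m}
\]
on $X$, checks that it is constant along fibers and hence descends to $B$, solves $(\chi+\ddbar v)^m=Fe^{v}\chi^m$, and then verifies the curvature identity $-\ddbar\log F=-\chi-\Ric(\chi)+\omega_{\mathrm{WP}}$; this identity is exactly the statement that $F\chi^m$ has Ricci curvature $-\chi+\omega_{\mathrm{WP}}$, i.e.\ that $F\chi^m$ is your $\Omega_B$. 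Your abstract route is a bit cleaner for the standalone theorem, while the paper's explicit $F$ pays off later: it is used to define the volume form $\Omega=\binom{n+m}{n}\omega_{\mathrm{SRF}}^n\wedge\omega_B^m$ in \eqref{crux3}, which feeds directly into the parabolic Monge--Amp\`ere equation \eqref{pcma1} governing the flow.
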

\begin{proof}
The proof is identical to \cite[Theorem 3.1]{ST} or \cite[Theorem 3.1]{ST2}, \cite[Section 4]{To}, so we will just briefly indicate the main points.
First, set
$$F=\frac{\Omega_1}{\binom{n+m}{n}\omega_{\textrm{SRF}}^n\wedge\chi^m},$$
which is a positive function on $X$. The argument on \cite[p.445]{To} shows that for every $y\in B$ the restriction $F|_{X_y}$ is constant, and so $F$ is the pullback of a function on $B$.
Now solve the complex Monge-Amp\`ere equation on $B$
\begin{equation}\label{gma}
(\chi+\ddbar v)^m=Fe^v \chi^m,
\end{equation}
where $\omega_B=\chi+\ddbar v$ is a K\"ahler metric on $B$. This equation can be solved thanks to work of Aubin \cite{Au} and Yau \cite{Ya}.
We claim that $\omega_B$ satisfies \eqref{gke} (the converse implication, that every solution of \eqref{gke} must also satisfy \eqref{gma}, and therefore must equal $\omega_B$, is a simple exercise). Indeed,
$$\Ric(\omega_B)=\Ric(\chi)-\ddbar v-\ddbar \log F,$$
so it is enough to prove that
\begin{equation}\label{curv}
-\ddbar \log F = -\chi-\Ric(\chi)+\omega_{\mathrm{WP}}.
\end{equation}
This can be proved by the same calculation as in \cite[Proposition 4.1]{To}.
\end{proof}

Now that we have our twisted K\"ahler-Einstein metric $\omega_B$ on $B$, we can use \eqref{c1} again to see that
$-\pi^*\omega_B$ is cohomologous to $c_1(X)$. Define now a smooth positive volume form
\begin{equation}\label{crux3}
\Omega=\binom{n+m}{n}\omega_{\textrm{SRF}}^n\wedge\omega_B^m.
\end{equation}
on $X$. A simple calculation as in \cite[Proposition 4.1]{To} shows that
$$\ddbar \log (\Omega)= \omega_B,$$
where from now on we abbreviate $\pi^*\omega_B$ by $\omega_B$.

\subsection{Preliminary estimates for the K\"ahler-Ricci flow}  Let now $\omega=\omega(t)$ be a solution of the K\"ahler-Ricci flow
\begin{equation} \label{krf11}
\frac{\de}{\de t}\omega=-\Ric(\omega)-\omega,\quad \omega(0)=\omega_0.
\end{equation}
We know that a solution exists for all time \cite{TZ}, because $K_X$ is semiample (see Section \ref{subsectionsub}).
Define reference forms $\ti{\omega}=\ti{\omega}(t)$ by
$$\ti{\omega}=e^{-t}\of+(1-e^{-t})\omega_B,$$
for $\of$ and $\omega_B$ as above.

There exists a uniform constant $T_I \ge 0$ such that
for $t\geq T_I$ we have that $\ti{\omega}$ is K\"ahler.
Then the K\"ahler-Ricci flow is equivalent to the parabolic complex Monge-Amp\`ere equation
\begin{equation} \label{pcma1}
\frac{\de}{\de t}\vp=\log\frac{e^{nt}(\ti{\omega}+\ddbar\vp)^{n+m}}{\Omega}-\vp, \quad \varphi(0) = -\rho, \quad \ti{\omega}+\ddbar\vp>0,
\end{equation}
where $\rho$ and $\Omega$ are defined as above.   Indeed, if $\varphi$ solves (\ref{pcma1}) then $\omega = \ti{\omega} + \ddbar \vp$ solves (\ref{krf11}).  Conversely, if $\omega$ solves (\ref{krf11}) then we can write $\omega= \ti{\omega} + \ddbar \varphi$ with $\varphi$ solving (\ref{pcma1}).

We have the following preliminary estimates for the K\"ahler-Ricci flow, most of which are already known by the results in \cite{ST, SW, FZ, Gi}.

\begin{lemma} \label{lemmaknown1} Let $\omega=\omega(t)$ solve the K\"ahler-Ricci flow as above, and write $\varphi=\varphi(t)$ for the solution of (\ref{pcma1}).  Then there exists a uniform constant $C>0$ such that  the following hold.
\begin{enumerate}
\item[(i)] $\displaystyle{C^{-1} \ti{\omega} \le \omega \le C \ti{\omega}}$, on $X \times [T_I, \infty)$, and $\omega_B \le C \omega$ on $X \times [0,\infty)$.
\item[(ii)] $\displaystyle{|\varphi | \le C(1+ t)e^{-t}}$ on $X \times [0,\infty)$.
\item[(iii)] The scalar curvature $R$ of $\omega$ satisfies $\displaystyle{|R| \le C}$ on $X \times [0,\infty)$.
\item[(iv)] For any $0<\ve < 1/2$ there exists $C_{\ve}$ such that  $\displaystyle{ | \dot{\varphi} |\le C_{\ve} e^{-\ve t}}$ and $\displaystyle{ | \varphi+ \dot{\varphi} |\le C_{\ve} e^{-\ve t}}$ on $X \times [0,\infty)$.
\end{enumerate}
\end{lemma}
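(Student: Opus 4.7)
The plan is to derive all four bounds from the parabolic Monge-Amp\`ere equation \eqref{pcma1} together with the explicit structure of the reference $\ti\omega$. A key preliminary observation is that, since $\omega_B$ has rank $m$ (as it is pulled back from an $m$-dimensional base) and $\of^n|_{X_y}$ is Ricci-flat on each fiber, all cross terms in the expansion of $\ti\omega^{n+m}$ vanish except the one with $n$ factors of $e^{-t}\of$ and $m$ factors of $(1-e^{-t})\omega_B$, giving
$$e^{nt}\frac{\ti\omega^{n+m}}{\Omega} = (1-e^{-t})^m = 1 + O(e^{-t}),$$
from the definition $\Omega = \binom{n+m}{n}\of^n\wedge\omega_B^m$. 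This controls $\log(e^{nt}\ti\omega^{n+m}/\Omega)$ and is the basic input for all subsequent arguments.

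Part (ii) then follows from the maximum principle applied to barriers of the form $\vp \pm C(1+t)e^{-t}$ in \eqref{pcma1}, adapting the argument of Song-Tian \cite{ST} to our normalization; the logarithmic factor $(1+t)$ arises from the extra $-\vp$ term in \eqref{pcma1}. For part (i), the bound $\omega_B \le C\omega$ is the parabolic Schwarz lemma of \cite{ST} applied to $\pi\colon (X,\omega)\to (B,\omega_B)$, using the bisectional curvature upper bound on $\omega_B$ coming from $-c_1(B)>0$; the two-sided equivalence $C^{-1}\ti\omega \le \omega \le C\ti\omega$ is the Aubin--Yau type second-order estimate of Fong-Zhang and Gill \cite{FZ, Gi}, via maximum principle applied to $\log\tr{\ti\omega}{\omega} - A\vp$ with $A$ chosen large enough to absorb the curvature contributions of $\ti\omega$, using part (ii) to control $\vp$. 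The reverse bound follows by combining the upper bound with the volume form identity from \eqref{pcma1}. Part (iii) is standard: the lower bound on scalar curvature is immediate from $(\partial_t - \Delta_\omega)R = R + |\Ric(\omega)|^2$, while the upper bound is the Song-Tian scalar curvature bound for the K\"ahler-Ricci flow with $K_X$ semiample.

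The main obstacle, and the genuinely new ingredient, is part (iv). A direct computation using \eqref{pcma1} and the identity $\dot{\ti\omega} + \ti\omega = \omega_B$ yields
$$\left(\frac{\partial}{\partial t} - \Delta_\omega\right)(\dot\vp + \vp) = \tr{\omega}{\omega_B} - m.$$
In addition, $\dot\vp + \vp = \log(\omega^{n+m}/\ti\omega^{n+m}) + O(e^{-t})$, and the arithmetic-geometric mean inequality on the eigenvalues of $\omega$ with respect to $\ti\omega$ controls $\dot\vp + \vp$ in terms of $\tr{\ti\omega}{\omega} - (n+m)$ and $\tr{\omega}{\ti\omega} - (n+m)$. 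Expanding $\tr{\omega}{\ti\omega} = (1-e^{-t})\tr{\omega}{\omega_B} + e^{-t}\tr{\omega}{\of}$ and noting that $e^{-t}\tr{\omega}{\of}$ is $O(e^{-t})$ by part (i), I would run the maximum principle on quantities of the form $e^{\ve t}(\dot\vp + \vp) \mp A\vp$ in the style of \cite{SW, TWY}, where the $A\vp$ term absorbs the residual $O(e^{-t})$ errors thanks to part (ii). The argument produces exponential decay for any rate $\ve$ strictly less than $1/2$, and the bound on $\dot\vp$ alone follows by combining this with the bound on $\vp$ from part (ii). The delicate point is bookkeeping the various $O(e^{-t})$ error terms precisely enough to reach any $\ve < 1/2$, which is the reason the conclusion is stated for an arbitrary $\ve$ in that range rather than a single sharp rate.
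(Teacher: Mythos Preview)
Your treatment of parts (i)--(iii) is fine and matches the paper's citations. There are, however, two genuine problems.

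\medskip
\textbf{The volume form expansion.} Your claim that ``all cross terms in the expansion of $\ti\omega^{n+m}$ vanish except the one with $n$ factors of $e^{-t}\of$ and $m$ factors of $(1-e^{-t})\omega_B$'' is incorrect. While $\omega_B^{m+1}=0$ (it is pulled back from $B^m$), the form $\of$ is a genuine $(1,1)$-form on $X$ and $\of^{n+k}$ need not vanish for $k\ge 1$; the fact that $\of|_{X_y}$ is Ricci-flat is irrelevant here. The correct statement is that the terms $e^{-(n+k)t}\of^{n+k}\wedge\omega_B^{m-k}$ for $k\ge 1$ are present but contribute $O(e^{-t})$ after multiplying by $e^{nt}$, which still yields $e^{nt}\ti\omega^{n+m}/\Omega = 1+O(e^{-t})$. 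So your conclusion survives, but the reasoning needs repair. Relatedly, your claim that $e^{-t}\tr{\omega}{\of}=O(e^{-t})$ is false: part (i) only gives $e^{-t}\tr{\omega}{\of}=O(1)$.

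\medskip
\textbf{Part (iv): the maximum principle does not close.} Your proposed argument for (iv) does not work, and the paper takes a completely different route. You want to apply the maximum principle to $Q=e^{\ve t}(\vp+\dot\vp)\mp A\vp$, using the evolution $(\partial_t-\Delta)(\vp+\dot\vp)=\tr{\omega}{\omega_B}-m$. But at this stage the only available information on $\tr{\omega}{\omega_B}-m$ is the two-sided bound $-m\le \tr{\omega}{\omega_B}-m\le C$ coming from (i); there is no decay. Computing $(\partial_t-\Delta)Q$, the term $\ve e^{\ve t}(\vp+\dot\vp)=\ve Q+O(1)$ appears with the \emph{wrong sign} for the maximum principle to give an upper bound on $Q$, and the $e^{\ve t}(\tr{\omega}{\omega_B}-m)$ term is of order $e^{\ve t}$ and cannot be absorbed by $A\vp$ (which is $O((1+t)e^{-t})$). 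The decay $\tr{\omega}{\omega_B}-m\le Ce^{-\eta t}$ is proved later (Lemma~\ref{1con}) and itself relies on (iv), so invoking it here would be circular.

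The paper's argument for (iv) avoids the spatial Laplacian entirely. From $\partial_t\dot\vp=-R-m-\dot\vp$ and the scalar curvature bound (iii), one gets $|\dot\vp|\le C$ and then $|\partial_t\dot\vp|\le C_0$. One then argues by contradiction: if $\dot\vp(x_k,t_k)\ge k e^{-\ve t_k}$ along some sequence, the bound $|\partial_t\dot\vp|\le C_0$ forces $\dot\vp(x_k,\cdot)\ge \tfrac{k}{2}e^{-\ve t_k}$ on a time interval of length $\sim k e^{-\ve t_k}$, and integrating in $t$ contradicts $|\vp|\le C(1+t)e^{-t}$ from (ii), provided $2\ve<1$. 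This is why the range $\ve<1/2$ appears: it is dictated by comparing $e^{-2\ve t}$ against $(1+t)e^{-t}$, not by bookkeeping of $O(e^{-t})$ errors in a maximum-principle argument.
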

\begin{proof}
For (i), the estimate $\omega_B \le C\omega$ follows from the Schwarz Lemma argument of Song-Tian \cite{ST, Ya2}.  The bound $C^{-1} \ti{\omega} \le \omega \le C \ti{\omega}$
 was proved by Fong-Zhang \cite[Theorem 1.1]{FZ} (see also \cite{ST} for the case of elliptic surfaces).

Part (ii) follows from exactly the same argument as in \cite[Lemma 4.1]{Gi}, which is a generalization of \cite[Lemma 6.7]{SW} (see also \cite[Lemma 3.4]{TWY}, \cite[Theorem 4.1]{FZ}).
Indeed, we claim that for $t\geq T_I$,
\begin{equation} \label{claimOmega}
\left| e^t  \log \frac{e^{nt} \tilde{\omega}^{n+m}}{\Omega}\right|\le C.
\end{equation}
To see the claim, we have, using \eqref{crux3},
\begin{align*}
\lefteqn{e^t  \log \frac{e^{nt} \tilde{\omega}^{n+m}}{\Omega} } \\ = {} & e^t \log \frac{e^{nt}(\binom{n+m}{n}e^{-nt}(1-e^{-t})^m\of^n\wedge\omega_B^m +\dots+e^{-(n+m)t}\of^{n+m})}{\binom{n+m}{n}\omega_{\textrm{SRF}}^n\wedge\omega_B^m} \\
= {} & e^t \log (1+ O(e^{-t})),
\end{align*}
which is bounded.

Define now $Q=e^t \varphi +At$, for $A$ a large positive constant to be determined. Then
\begin{equation}
\frac{\partial Q}{\partial t} = e^t \log \left( \frac{e^{nt} (\tilde{\omega}+ \ddbar \varphi)^{n+m}}{\Omega} \right)+ A.
\end{equation}
We wish to bound $Q$ from below.  Suppose that $(x_0, t_0)$ is a point with $t_0 > T_I$ at which $Q$ achieves a minimum.  At this point we have
\begin{align*}
0 \ge \frac{\partial Q}{\partial t} \ge {} &
e^t\log\frac{e^{nt}\ti{\omega}^{n+m}}{\Omega}+A \ge   -C'+A,
\end{align*}
for a uniform $C'$, thanks to \eqref{claimOmega}.
 Choosing $A> C'$ gives contradiction.
Hence $Q$ is bounded from below and it follows that $\varphi\geq -C(1+t)e^{-t}$ for a uniform
$C$. The upper bound for $\varphi$ is similar.

Part (iii) is due to Song-Tian \cite{ST3}, who proved a uniform scalar curvature bound along the K\"ahler-Ricci flow whenever the canonical bundle of $X$ is semiample.

Part (iv) follows from the argument of \cite[Lemma 6.4]{TWY} (see also \cite[Lemma 6.8]{SW}).  We include the argument for the sake of completeness.  First note that $\dot{\varphi}$ satisfies
$$\ddt{} \dot{\varphi} = - R-m -\dot{\varphi}.$$
Then the bound on scalar curvature (iii) implies  that $\dot{\varphi}$ is bounded (in fact the bound on $\dot{\varphi}$ is proved first in \cite{ST3} in order to bound scalar curvature).  Hence we have a bound $|\ddt \dot{\vp}| \le C_0$.  Now fix $\ve \in (0,1/2)$ and suppose for a contradiction that the estimate $\dot{\varphi} \le C_{\ve} e^{-\ve t}$ fails.  Then there exists a sequence $(x_k, t_k) \in X \times [0,\infty)$ with $t_k \rightarrow \infty$ and $\dot{\varphi}(x_k, t_k) \ge k e^{-\ve t_k}$.  Define $\gamma_k = \frac{k}{2C_0}e^{-\ve t_k}$.  Working at the point $x_k$, it follows that $\dot{\varphi} \ge \frac{k}{2} e^{-\ve t_k}$ on  $[t_k, t_k+\gamma_k]$.  Then using the bound (ii),
$$C(1+t_k)e^{-t_k} \ge \int_{t_k}^{t_k+\gamma_k} \dot{\vp} \, dt \ge  \gamma_k \frac{k}{2} e^{-\ve t_k} = \frac{k^2}{4C_0} e^{-2\ve t_k}.$$
Since $2\ve<1$ we obtain a contradiction as $k \rightarrow \infty$.  The lower bound $\dot{\varphi} \ge -C_{\ve} e^{-\ve t}$ is similar.  The bound on $|\varphi+\dot{\varphi}|$ follows from this and part (ii).
\end{proof}

The following lemma is easily derived from Song-Tian \cite{ST}.

\begin{lemma} \label{lemmaknown2} We have
\begin{equation} \label{epdp}
\left(\frac{\de}{\de t}-\Delta\right)(\vp+\dot{\vp})=\tr{\omega}{\omega_B}-m,
\end{equation}
and
\begin{equation} \label{schwarz1}
\left(\frac{\de}{\de t}-\Delta\right)\tr{\omega}{\omega_B}\leq \tr{\omega}{\omega_B}+C(\tr{\omega}{\omega_B})^2 \le C.
\end{equation}
\end{lemma}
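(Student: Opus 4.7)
The plan is to derive both relations by direct computation: (\ref{epdp}) from the parabolic Monge-Amp\`ere equation (\ref{pcma1}), and (\ref{schwarz1}) via a parabolic Schwarz (Chern-Lu) lemma applied to the holomorphic submersion $\pi$ along the flow.

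For (\ref{epdp}), I would first rewrite (\ref{pcma1}) as
\[
\vp + \dot{\vp} = \log \frac{e^{nt}\omega^{n+m}}{\Omega},
\]
and apply $\ddt - \Delta$ to both sides. Using $\ddt \omega = -\Ric(\omega) - \omega$, the time derivative of the right-hand side is $n + \tr{\omega}{\ddt \omega} = n - R - (n+m) = -R - m$. The Laplacian of the right-hand side equals $\tr{\omega}{\ddbar \log(e^{nt}\omega^{n+m}/\Omega)}$; invoking the local identity $\ddbar \log \omega^{n+m} = -\Ric(\omega)$ together with $\ddbar \log \Omega = \omega_B$ (established in Section \ref{subsectionsub}), this equals $-R - \tr{\omega}{\omega_B}$. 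Subtracting yields (\ref{epdp}).

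For (\ref{schwarz1}), the plan is the standard Chern-Lu computation for $\pi:(X,\omega)\to(B,\omega_B)$ along the normalized K\"ahler-Ricci flow. Since $B$ is compact, the bisectional curvature of $\omega_B$ is uniformly bounded, and the classical Schwarz-lemma calculation (as in \cite{ST, ST2}) gives
\[
\left(\ddt - \Delta\right) \log \tr{\omega}{\omega_B} \le 1 + C\, \tr{\omega}{\omega_B},
\]
where the ``$1$'' comes from the $-\omega$ term in the flow and the ``$C \tr{\omega}{\omega_B}$'' from the bisectional curvature of $\omega_B$ on the base. Setting $f = \tr{\omega}{\omega_B}$ and using the identity $(\ddt-\Delta)f = f(\ddt-\Delta)\log f - |\nabla f|^2/f$ (which produces a favorable negative gradient term that is simply dropped) gives the first inequality of (\ref{schwarz1}). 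The final bound by a uniform $C$ then follows immediately from Lemma \ref{lemmaknown1}.(i), which yields $\tr{\omega}{\omega_B} \le C$.

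The main subtlety in executing the second step is that $\omega_B = \pi^*\omega_B$ is only semipositive on $X$ and degenerates along the fibers, so one has to make sure the Chern-Lu calculation is set up correctly; however, since the target metric is honestly K\"ahler on $B$ and only base curvature quantities appear in the Schwarz argument, choosing coordinates adapted to the submersion (with the first $m$ coordinates pulled back from $B$) makes the computation proceed exactly as in the classical case. Both identities are essentially well-known from the Song-Tian work; the task here is just to verify them in the present notational conventions.
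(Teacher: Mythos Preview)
Your proposal is correct and essentially matches the paper's argument. For (\ref{epdp}) the paper instead computes $(\ddt-\Delta)\vp=\dot{\vp}-(n+m)+\tr{\omega}{\ti{\omega}}$ and $(\ddt-\Delta)\dot{\vp}=\tr{\omega}{(\omega_B-\ti{\omega})}+n-\dot{\vp}$ separately and adds, whereas you apply the heat operator directly to $\log(e^{nt}\omega^{n+m}/\Omega)$; these are equivalent rearrangements of the same calculation, and your route is arguably a little cleaner. For (\ref{schwarz1}) the paper simply cites the Schwarz lemma computation of \cite[Theorem 4.3]{ST} and then invokes Lemma \ref{lemmaknown1}.(i), which is exactly the content of your outline via $\log\tr{\omega}{\omega_B}$.
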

\begin{proof}
The evolution equation (\ref{epdp}) follows from the two well-known calculations:
$$\left(\frac{\de}{\de t}-\Delta\right)\vp=\dot{\vp}-(n+m)+\tr{\omega}{\ti{\omega}}$$
and
$$\left(\frac{\de}{\de t}-\Delta\right)\dot{\vp}=\tr{\omega}{(\omega_B-\ti{\omega})}+n-\dot{\vp}.$$
The first inequality  of (\ref{schwarz1}) is the Schwarz Lemma computation  from \cite[Theorem 4.3]{ST}.  The second inequality of (\ref{schwarz1}) follows from part (i) of Lemma \ref{lemmaknown1}.
\end{proof}

\subsection{Two elementary lemmas}

In this section, we state and prove two elementary lemmas which we will need later in our proofs of the collapsing estimates.  The first is as follows:

\begin{lemma}\label{1conve}
Consider a smooth function $f: X \times [0,\infty) \rightarrow \mathbb{R}$ which satisfies the following conditions:
\begin{itemize}
\item[(a)] There is a constant $A$ such that for all $y\in B$ and all $t\geq 0$
$$|\nabla (f|_{X_y})|_{\omega_0|_{X_y}}\leq A.$$
\item[(b)] For all $y\in B$ and all $t\geq 0$ we have
$$\int_{x \in X_y}f(x,t) \omega_{{\rm SRF},y}^n=0.$$
\item[(c)] There exists a function $h: [0, \infty) \rightarrow [0,\infty)$ such that $h(t) \rightarrow 0$ as $t \rightarrow \infty$ such that
$$\sup_{x\in X} f(x,t)\leq h(t),$$
for all $t\geq 0$.
\end{itemize}
Then  there is a constant $C$ such that $$\sup_{x\in X} |f(x,t)|\leq C (h(t))^{1/(2n+1)}$$ for all $t$ sufficiently large.
\end{lemma}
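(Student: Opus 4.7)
The plan is to convert the one-sided upper bound from (c) into a two-sided bound by combining the integral condition (b) with the fiberwise Lipschitz control from (a). The argument is carried out fiberwise: fix $t$ and $y \in B$, and set $M = M(y,t) := -\inf_{x \in X_y} f(x,t)$. The goal reduces to showing $M \le C h(t)^{1/(2n+1)}$ with a constant independent of $y$, since $h(t)$ already controls $f$ from above.

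Assume $M > 0$ and pick $x_0 \in X_y$ where $f(\cdot,t)$ achieves its minimum on $X_y$. The gradient bound in (a) gives the Lipschitz estimate $f(x,t) \geq -M$ and $f(x,t) \leq -M + A d_{\omega_0|_{X_y}}(x,x_0)$ for $x \in X_y$, so in the geodesic ball $B_r(x_0) \subset X_y$ with $r = M/(2A)$ we have $f \leq -M/2$. Since $B$ is compact and the fibers $X_y$ are diffeomorphic Calabi-Yau manifolds varying smoothly, the metrics $\omega_0|_{X_y}$ and (by Yau's theorem applied in families) the Ricci-flat representatives $\omega_{\textrm{SRF},y}$ have uniformly bounded geometry. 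In particular there exist uniform constants $c_0,r_0>0$ with $\omega_{\textrm{SRF},y}^n \geq c_0\,\omega_0|_{X_y}^n$ and $\vol_{\omega_0|_{X_y}}(B_r(x_0)) \geq c_0 r^{2n}$ for all $y\in B$, all $x_0\in X_y$, and all $r \leq r_0$.

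Now assume $M/(2A) \leq r_0$. Writing $f = f^+ - f^-$ for the positive and negative parts, the fiberwise ball estimate yields
$$\int_{X_y} f^-\, \omega_{\textrm{SRF},y}^n \;\geq\; \frac{M}{2}\cdot c_0 \cdot c_0 \Big(\frac{M}{2A}\Big)^{2n} \;=\; c_1 M^{2n+1}.$$
Condition (c) and the uniform volume of the fibers give
$$\int_{X_y} f^+\, \omega_{\textrm{SRF},y}^n \;\leq\; h(t)\cdot\vol_{\omega_{\textrm{SRF},y}}(X_y) \;\leq\; c_2\, h(t).$$
Condition (b) says these two integrals are equal, hence $M^{2n+1} \leq (c_2/c_1)\,h(t)$, which is the desired bound. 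It remains to justify the assumption $M \leq 2Ar_0$ for $t$ large: otherwise $f \leq -Ar_0$ on $B_{r_0}(x_0)$, which would force $\int f^- \geq \delta$ for a uniform $\delta>0$, contradicting $\int f^- = \int f^+ \leq c_2 h(t) \to 0$.

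The only substantive obstacle is the uniform geometry across the family of fibers (uniform lower volume bounds on small balls and a uniform comparison between $\omega_0|_{X_y}$ and $\omega_{\textrm{SRF},y}$); this is a routine compactness consequence of the smoothness of the submersion and of Yau's construction, but should be invoked explicitly. Everything else is an elementary consequence of combining the gradient bound with the vanishing integral.
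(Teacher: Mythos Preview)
Your argument is correct and follows essentially the same route as the paper's proof: use the fiberwise gradient bound (a) to control $f$ on a small ball around a point where $f$ is very negative, compare the resulting lower bound on $\int_{X_y} f^-\,\omega_{\mathrm{SRF},y}^n$ with the upper bound on $\int_{X_y} f^+\,\omega_{\mathrm{SRF},y}^n$ coming from (c), and invoke (b) together with uniform volume/metric comparison across fibers. The only cosmetic difference is that the paper phrases this as a proof by contradiction (assuming $|f(x_k,t_k)|\ge k\,h(t_k)^{1/(2n+1)}$ along a sequence and deriving $k^{2n+1}\le C$), whereas you give the direct quantitative bound; the underlying estimate is identical.
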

\begin{proof}
Arguing by contradiction, suppose that there exist $t_k \to\infty$ and  $x_k \in X$ with
$$\sup_X |f(t_k)|=|f(x_k, t_k)|\geq k (h(t_k))^{1/(2n+1)},$$
for all $k$. We may also assume that $t_k$ is sufficiently large so that the quantity $k (h(t_k))^{1/(2n+1)}$ is less than or equal to $1$ for all $k$.
Let $y_k=\pi(x_k)$, so $x_k \in X_{y_k}$. Thanks to assumption (c), for all $k$ large, we must have
\begin{equation}\label{1e2}
f( x_k, t_k)\leq - k (h(t_k))^{1/(2n+1)}.
\end{equation}
This together with assumption (a) implies that for all $x$ in the $\omega_0$-geodesic ball $B_r(x_k)$ in $X_{y_k}$ centered at $x_k$ and of radius
$$r=\frac{ k (h(t_k))^{1/(2n+1)}}{2A}\leq \frac{1}{2A},$$ we have
$$f(x,t_k)\leq -\frac{ k (h(t_k))^{1/(2n+1)}}{2}.$$
Therefore, using assumptions (b) and (c),
$$0=\int_{x\in X_{y_k}}f(x,t_k)\omega_{{\rm SRF},y_k}^n\leq -\frac{ k (h(t_k))^{1/(2n+1)}}{2} \int_{B_r(x_k)} \omega_{{\rm SRF},y_k}^n+Ch(t_k).$$
Now the metrics $\omega_{{\rm SRF},y_k}$ are all uniformly equivalent to each other, and in particular,
$$\int_{B_r(x_k)} \omega_{{\rm SRF},y_k}^n\geq C^{-1} k^{2n} (h(t_k))^{2n/(2n+1)},$$
using that $k (h(t_k))^{1/(2n+1)} \leq 1$ and that small balls for $\omega_{{\rm SRF},y_k}$ have volume comparable to Euclidean balls.
Thus we get
$$k^{2n+1}\leq C,$$
which gives a contradiction when $k$ is large.
\end{proof}

\begin{remark} \label{remarkcpt}
Note that Lemma \ref{1conve} also holds with the same proof if $X$ is replaced by $K=\pi^{-1}(K')$ for $K'$ a compact subset of $B$.  Hence the result holds for our more general holomorphic  fiber spaces $\pi: X \rightarrow B$ as discussed in the introduction, if we restrict to  $\pi^{-1}(K')$ for a compact set $K' \subset B \backslash S'$.
\end{remark}

The second lemma is an elementary observation from linear algebra.

\begin{lemma} \label{lemmamatrix}
Let $A$ be an $N \times N$ positive definite symmetric matrix.  Assume that there exists $\ve \in (0,1)$ with
$$  \emph{tr} \, A   \le N + \ve, \quad   \det A   \ge 1-\ve.$$
Then there exists a constant $C_N$ depending only on $N$ such that
$$\| A- I \| \le C_N \sqrt{\ve},$$
where $\| \cdot \|$ is the Hilbert-Schmidt norm, and $I$ is the $N\times N$ identity matrix.
\end{lemma}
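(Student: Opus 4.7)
The plan is to diagonalize $A$ and reduce everything to a scalar estimate for each eigenvalue, using the convex function $f(x)=x-1-\log x$, whose quadratic behavior near $x=1$ will give us exactly the $\sqrt{\ve}$ bound.

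First, since $\|\cdot\|$ is unitarily invariant and the hypotheses are expressed only through the trace and determinant, write $A=U^* D U$ with $D=\mathrm{diag}(\lambda_1,\dots,\lambda_N)$ and $\lambda_i>0$. Then
\[
\|A-I\|^2=\sum_{i=1}^N(\lambda_i-1)^2,\qquad \tr{}{A}=\sum \lambda_i,\qquad \det A=\prod\lambda_i.
\]
I would first dispose of the moderate/large $\ve$ regime by a trivial bound: each $\lambda_i\le \tr{}{A}\le N+1$, so $\|A-I\|\le N^{3/2}(N)$, and hence for $\ve\geq \ve_0:=1/2$ the claim holds with $C_N=N^{3/2}\sqrt{2}$. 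Therefore I only need to treat $\ve\in(0,1/2]$.

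In that range, the lower bound on $\det A$ gives uniform two-sided control on each eigenvalue: for any fixed $i$,
\[
\lambda_i=\frac{\det A}{\prod_{j\neq i}\lambda_j}\ge \frac{1-\ve}{(N+1)^{N-1}}\ge \frac{1}{2(N+1)^{N-1}}=:c_N,
\]
so each $\lambda_i\in[c_N,N+1]$. Now introduce $f(x)=x-1-\log x$, which is smooth and strictly convex on $(0,\infty)$, vanishes to second order at $x=1$, and is nonnegative. By compactness of $[c_N,N+1]$ and a Taylor expansion around $x=1$, there is a constant $\kappa_N>0$ (depending only on $N$) such that $f(x)\ge \kappa_N (x-1)^2$ on $[c_N,N+1]$. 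Summing over the eigenvalues yields
\[
\kappa_N\sum_{i=1}^N(\lambda_i-1)^2\le \sum_{i=1}^N f(\lambda_i)=\tr{}{A}-N-\log\det A.
\]
Finally I would plug in the hypotheses: $\tr{}{A}-N\le \ve$ and $-\log\det A\le -\log(1-\ve)\le 2\ve$ (the last elementary inequality valid for $\ve\le 1/2$). Hence $\|A-I\|^2\le 3\ve/\kappa_N$, which is the desired bound with $C_N=\sqrt{3/\kappa_N}$.

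The only mildly delicate point is the uniform-in-$N$ extraction of the constant $\kappa_N$ in the inequality $f(x)\ge \kappa_N(x-1)^2$ on $[c_N,N+1]$; this is routine since $f''(x)=1/x^2$ is bounded below by $1/(N+1)^2$ on that interval, so one can even take $\kappa_N=\tfrac{1}{2(N+1)^2}$ directly from the integral form of Taylor's theorem, avoiding any compactness argument.
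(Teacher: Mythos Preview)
Your proof is correct. The approach, however, is genuinely different from the paper's.

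The paper diagonalizes as you do, but then works with the normalized elementary symmetric polynomials $S_k$: the hypotheses read $S_1\le 1+\ve/N$ and $S_N\ge 1-\ve$, and the Maclaurin inequalities $S_1\ge\sqrt{S_2}\ge S_N^{1/N}$ pin down $|S_1-1|+|S_2-1|\le C\ve$. Since $\|A-I\|^2=\sum(\lambda_j-1)^2$ can be written exactly as $N^2S_1^2-2NS_1-N(N-1)S_2+N$, one reads off the $O(\ve)$ bound directly.

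Your route replaces the symmetric-function algebra with the convexity of $f(x)=x-1-\log x$, so that $\sum f(\lambda_i)=\tr{}{A}-N-\log\det A\le 3\ve$ controls $\sum(\lambda_i-1)^2$ via the quadratic lower bound on $f$. This is arguably cleaner: it avoids the Maclaurin inequalities entirely and gives an explicit constant $\kappa_N=\tfrac{1}{2(N+1)^2}$ from Taylor's theorem. (Incidentally, as your final remark implicitly notices, the lower eigenvalue bound $\lambda_i\ge c_N$ is not actually needed: since the intermediate point $\xi$ in Taylor's formula lies between $1$ and $\lambda_i\le N+1$, one always has $f''(\xi)\ge (N+1)^{-2}$.) The paper's argument, on the other hand, is a bit shorter and produces the $\|A-I\|^2\le C\ve$ bound in one algebraic line once $S_1,S_2$ are controlled.
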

\begin{proof}  We may assume that $N \ge 2$.
Let $\lambda_1, \ldots, \lambda_N$ be the eigenvalues of $A$.  Define the (normalized) elementary symmetric polynomials $S_k$ by
$$S_k = {\binom{N}{k}}^{-1} \sum_{1 \le i_1 < \cdots < i_k \le N} \lambda_{i_1} \cdots \lambda_{i_k}, \quad \textrm{for } k=1, \ldots, N.$$
By assumption, $S_1 \le 1+ N^{-1} \ve$ and $S_N \ge 1-\ve$.
The Maclaurin inequalities
$$S_1 \ge \sqrt{S_2} \ge S_N^{1/N},$$
imply that $|S_1-1|+|S_2-1| \le C \ve$ for $C$ depending only on $N$.
Compute
$$\| A-I \|^2 = \sum_{j=1}^N (\lambda_j-1)^2 = N^2 S_1^2 -2NS_1- N(N-1) S_2 +N \le C' \ve,$$
for $C'$ depending only on $N$, and the result follows.
\end{proof}

\subsection{A local Calabi  estimate for collapsing metrics}  Later we will need a local Calabi estimate for solutions of the K\"ahler-Ricci flow which are collapsing exponentially in specified directions.  The Calabi estimate for the elliptic complex Monge-Amp\`ere equation, giving third order estimates for the potential function depending on the second order estimates, first appeared in the paper of Yau \cite{Ya}, and was used in the parabolic case by Cao \cite{Ca}.
In the following we use a formulation due to Phong-Sesum-Sturm \cite{PSS}, together with a cut-off function  argument similar to that used in \cite{ShW}.
\begin{proposition}\label{1localest}
Let $B_1(0)$ be the unit polydisc in $\mathbb{C}^{n+m}$ and let $\omega_E^{(n+m)} = \sum_{k=1}^{m+n} \sqrt{-1} dz^k \wedge d\ov{z}^k$ be the Euclidean metric.
Write $\mathbb{C}^{n+m}=\mathbb{C}^{m}\oplus\mathbb{C}^{n}$ and define $\omega_{E,t}=\omega_{E}^{(m)} + e^{-t}\omega^{(n)}_E$,  for $\omega_{E}^{(m)}$ and $\omega^{(n)}_E$ Euclidean metrics on the two factors $\mathbb{C}^m$ and $\mathbb{C}^n$ respectively.
Assume that $\omega=\omega(t)$ is a solution of the K\"ahler-Ricci flow
$$\frac{\de \omega}{\de t}=-\Ric(\omega)-\omega,  \quad \omega(0)=\omega_0,$$
on $B_1(0)\times [0,\infty)$ which satisfies
\begin{equation}\label{1assum}
A^{-1}\omega_{E,t}\leq \omega \leq A\, \omega_{E,t},
\end{equation}
for some positive constant $A$.
Then there is a constant $C$ that depends only on $n,m,A$  and $\omega_0$ such that for all $t\geq 0$ on $B_{1/2}(0)$ we have
\begin{equation}\label{1bo}
S=|\nabla^E g|^2_{g} \leq Ce^t,
\end{equation}
where $g$ is the metric associated to $\omega$ and $\nabla^E$ is the covariant derivative of $\omega_E^{(n+m)}$.
\end{proposition}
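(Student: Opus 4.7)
The plan is to carry out a localized parabolic Calabi $C^3$ estimate in the spirit of Yau--Cao \cite{Ya, Ca} and Phong--Sesum--Sturm \cite{PSS}, combined with a cut-off argument as in Sherman--Weinkove \cite{ShW}, adapted to the collapsing flat reference $\omega_{E,t}$. Since both $\omega_E^{(n+m)}$ and $\omega_{E,t}$ are flat, their Christoffel symbols vanish in the Euclidean coordinates, so the Christoffels $\Psi^k_{ij} := \Gamma^k_{ij}(g) = g^{k\bar l}\partial_i g_{j\bar l}$ of $g$ form a globally defined tensor on $B_1(0)$, and one checks $S = g^{i\bar j}g_{k\bar l}g^{p\bar q}\Psi^k_{ip}\overline{\Psi^l_{jq}}$.

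The second step is to derive a parabolic inequality for $S$. Running Yau's standard Calabi computation along the K\"ahler-Ricci flow with reference $\omega_{E,t}$ should give
\begin{equation*}
\left(\frac{\partial}{\partial t}-\Delta\right) S \;\le\; -c_0\bigl(|\nabla\Psi|^2_g + |\overline{\nabla}\Psi|^2_g\bigr) + C_1 S + C_2,
\end{equation*}
for constants $c_0 > 0$ and $C_1, C_2$ depending only on $n,m,A$ and $\omega_0$. The dangerous contributions involving $\Ric(g)$ cancel against the flow equation $\partial_t g^{i\bar j} = R^{i\bar j}+g^{i\bar j}$, while the reference-curvature terms vanish by flatness of $\omega_{E,t}$; the time dependence $\partial_t g_{E,t} = -e^{-t}\omega_E^{(n)}$ contributes only a harmless $O(e^{-t})$ correction.

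For the localization, I would pick a non-negative cut-off $\eta \in C^\infty_c(B_1(0))$ with $\eta \equiv 1$ on $B_{1/2}(0)$ and standard derivative bounds, and consider the auxiliary function
\begin{equation*}
Q \;=\; \eta^2 e^{-t} S \;+\; B\,\mathrm{tr}_g \omega_{E,t},
\end{equation*}
with $B = B(n,m,A)$ sufficiently large. The quantity $\mathrm{tr}_g \omega_{E,t}$ is bounded above by $A(n+m)$ by \eqref{1assum}, and a parabolic Aubin--Yau $C^2$-type computation with the flat reference $\omega_{E,t}$ produces a good negative term of order $-c_3|\Psi|^2_g$ in its evolution. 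The gradient terms $-c_0 e^{-t}\eta^2|\nabla\Psi|^2_g$ together with $-Bc_3|\Psi|^2_g$ should absorb (via Cauchy--Schwarz against $\partial\eta$ and $\partial\overline{\partial}\eta$) the $C_1 e^{-t}\eta^2 S$ term from the evolution of $\eta^2 e^{-t}S$ and all the cut-off error terms. The parabolic maximum principle applied to $Q$ on $B_1(0)\times[0,T]$ then yields $Q \le C$ uniformly in $T$, whence $S \le C e^t$ on $B_{1/2}(0)$, which is the desired bound.

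The main technical obstacle will be step two: verifying that the constants $C_1, C_2$ in the evolution of $S$ do \emph{not} pick up factors that grow with $t$ as the fiber directions collapse. This rests essentially on the flatness of $\omega_{E,t}$ for every $t$, which kills all reference-curvature terms that would otherwise carry unfavorable factors of $e^t$ after contraction with $g^{-1}$ (whose fiber-block entries are of size $e^t$). Once the parabolic inequality for $S$ is established with $t$-independent constants, the cut-off maximum principle argument for $Q$ proceeds along classical lines.
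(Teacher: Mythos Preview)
Your proposal is essentially the paper's own argument: parabolic Calabi evolution of $S$ \`a la Phong--Sesum--Sturm, localized with a cutoff as in Sherman--Weinkove, and paired with a second-order trace quantity to absorb the leftover $CS$. Two small corrections are worth making.

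First, the trace you add should be $\mathrm{tr}_{\omega_{E,t}}\omega$, not $\mathrm{tr}_{\omega}\omega_{E,t}$. The parabolic Aubin--Yau (Cao) computation applied to $\mathrm{tr}_{\omega_{E,t}}\omega$ gives
\[
\left(\frac{\partial}{\partial t}-\Delta\right)\mathrm{tr}_{\omega_{E,t}}\omega
= -\mathrm{tr}_{\omega_{E,t}}\omega + e^{-t}g_{E,t}^{i\bar q}g_{E,t}^{p\bar j}(g^{(n)}_E)_{p\bar q}g_{i\bar j}
- g_{E,t}^{i\bar\ell}g^{p\bar j}g^{k\bar q}\nabla^E_i g_{k\bar j}\nabla^E_{\bar\ell}g_{p\bar q}
\le -A^{-1}S,
\]
where the first two terms combine to $-\mathrm{tr}_{\text{base}}g\le 0$ and the third is $\le -A^{-1}S$ by \eqref{1assum}. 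The Schwarz--Lemma direction $\mathrm{tr}_\omega\omega_{E,t}$ does not yield this clean $-cS$ term.

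Second, your ``main technical obstacle'' is misplaced. The Calabi inequality
\[
\left(\frac{\partial}{\partial t}-\Delta\right)S \le S - |\nabla T|^2 - |\overline{\nabla}T|^2
\]
is reference-free (any flat reference gives the same $T$, so no $\partial_t g_{E,t}$ term appears), hence automatically $t$-independent with $C_1=1$, $C_2=0$. The genuine source of $e^t$ is the cutoff: since $g^{-1}$ blows up like $e^t$ in fiber directions, one only has $|\nabla\psi|^2_g\le Ce^t$ and $\Delta(\psi^2)\ge -Ce^t$. This is precisely why the auxiliary quantity must carry the weight $e^{-t}$, as you correctly wrote; after the Young-inequality step one gets $\left(\tfrac{\partial}{\partial t}-\Delta\right)(e^{-t}\psi^2 S)\le CS$, and then $C_0\,\mathrm{tr}_{\omega_{E,t}}\omega$ with $C_0$ large absorbs this.
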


\begin{proof}
In the following $C$ will denote a generic constant that depends
only on $n,m, A$  and $\omega_0$. For simplicity of notation, we write simply $\omega_E=\omega_E^{(n+m)}$.
Following Yau \cite{Ya}, define
$$S=|\nabla^E g|_g^2=g^{i\ov{\ell}} g^{p\ov{k}} g^{j\ov{q}} \nabla^E_i g_{j\ov{k}} \nabla^E_{\ov{\ell}}g_{p\ov{q}}.$$
If $T_{ij}^k$ denotes the difference of the Christoffel symbols of $\omega$ and $\omega_E$, then $T$ is a tensor and it is easy to see that $S=|T|^2_g$. Since $\omega_{E,t}$ is also a flat metric, the quantity $S$ is the same if we use its connection $\nabla^{E,t}$.
The parabolic Calabi computation gives (cfr. \cite{PSS})
$$\left(\frac{\de}{\de t}-\Delta\right) S \leq S-|\nabla T|^2 -|\ov{\nabla} T|^2,$$
where here and in the rest of the proof we suppress the subscript $g$ from the norms.

Take $\psi$ a nonnegative smooth cutoff function supported
in $B_1(0)$ with $\psi\equiv 1$ on $B_{1/2}(0)$ and with $$\sqrt{-1} \partial \psi \wedge \ov{\partial} \psi \le C \omega_E, \qquad -C \omega_E \le \ddbar (\psi^2) \le C \omega_E.$$
From \eqref{1assum} it then follows that
$$|\nabla \psi|^2 \leq Ce^t, \quad \Delta (\psi^2)\geq -Ce^t.$$
We can then compute
\[\begin{split}
\left(\frac{\de}{\de t}-\Delta \right)(\psi^2 S)&\leq \psi^2\left(\frac{\de}{\de t}-\Delta \right)S+C Se^t+2|\langle \nabla \psi^2,\nabla S\rangle|\\
&\leq \psi^2S-\psi^2(|\nabla T|^2+|\ov{\nabla} T|^2)+C Se^t+2|\langle \nabla \psi^2,\nabla S\rangle|.
\end{split}\]
On the other hand, using the Young inequality
\[\begin{split}
2|\langle \nabla \psi^2,\nabla S\rangle|
&=4\psi|\langle \nabla\psi, \nabla |T|^2\rangle|\leq 4\psi |\nabla \psi|\cdot | \nabla |T|^2|\\
&\leq \psi^2(|\nabla T|^2+|\ov{\nabla} T|^2)+CS|\nabla \psi|^2\\
&\leq \psi^2(|\nabla T|^2+|\ov{\nabla} T|^2)+CSe^t,
\end{split}\]
and so
$$\left(\frac{\de}{\de t}-\Delta\right) (\psi^2 S)\leq CSe^t,$$
$$\left(\frac{\de}{\de t}-\Delta\right) (e^{-t}\psi^2 S)\leq CS.$$
On the other hand, the second order estimate of Cao \cite{Ca} (the parabolic version of the estimate of \cite{Ya, Au}) gives
\[
\begin{split}
\left(\frac{\de}{\de t}-\Delta\right) \tr{\omega_{E,t}}{\omega}= {} & -\tr{\omega_{E,t}}{\omega}+e^{-t}g_{E,t}^{i\ov{q}} \, g_{E,t}^{p\ov{j}}\, (g^{(n)}_E)_{p\ov{q}} \, g_{i\ov{j}} \\
& {} -g_{E,t}^{i\ov{\ell}}\, g^{p\ov{j}} g^{k\ov{q}} \nabla^E_i g_{k\ov{j}} \nabla^E_{\ov{\ell}}g_{p\ov{q}}\\
 \leq {} & -A^{-1}S,
\end{split}
\]
using \eqref{1assum}. It follows that if we take $C_0$ large enough, then we have
$$ \left(\frac{\de}{\de t}-\Delta \right)\left(e^{-t}\psi^2 S+C_0\tr{\omega_{E,t}}{\omega}\right)  < 0.$$
It follows that the maximum of $e^{-t}\psi^2 S+C_0\tr{\omega_{E,t}}{\omega}$ on $\ov{B_1(0)}\times [0,T]$ (for any finite $T$) can only occur at $t=0$ or on the boundary of
$\ov{B_1(0)}$, where $\psi=0$. Since by \eqref{1assum} we have $\tr{\omega_{E,t}}{\omega}\leq  (n+m)A$, we conclude that
$\sup_{B_{1/2}(0)}S\leq Ce^t,$ as required.
\end{proof}

\subsection{Collapsing estimates for the K\"ahler-Ricci flow} In this section, we give the proof of Theorem \ref{mainthm1}, which establishes the metric collapsing for the K\"ahler-Ricci flow in the case of a holomorphic submersion.  We will make use of the lemmas and observations  above.

 We begin with a simple but crucial lemma, which is a generalization of \cite[Proposition 7.3]{TWY}.  We will need it later.

\begin{lemma}\label{1con}
For any $\eta \in (0,1/4)$, there is a constant $C$ such that
$$\tr{\omega}{\omega_B}-m\leq Ce^{-\eta t}.$$
\end{lemma}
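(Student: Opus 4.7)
My plan is to use a parabolic maximum principle argument modeled on \cite[Proposition 7.3]{TWY}, combining the two identities of Lemma \ref{lemmaknown2} with the decay $|\varphi + \dot\varphi| \le C_\epsilon e^{-\epsilon t}$ from Lemma \ref{lemmaknown1}(iv), which holds for any $\epsilon \in (0,1/2)$. The crucial observation is that \eqref{epdp} expresses the quantity $\tr{\omega}{\omega_B} - m$ we want to bound as the parabolic operator $(\ddt - \Delta)$ applied to a function $\varphi + \dot\varphi$ that we already control in $C^0$.

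Specifically, I would first try a test function of the form
$$Q = \tr{\omega}{\omega_B} - m - A(\varphi + \dot\varphi) - B e^{-\eta t},$$
for large constants $A, B > 0$ to be chosen, and $\eta \in (0, 1/4)$. Using \eqref{schwarz1} together with the uniform bound $\tr{\omega}{\omega_B} \leq C$ from Lemma \ref{lemmaknown1}(i) gives $(\ddt - \Delta)\tr{\omega}{\omega_B} \leq C_1$, while \eqref{epdp} converts the $-A(\varphi + \dot\varphi)$ contribution into $-A(\tr{\omega}{\omega_B} - m)$. Substituting $\tr{\omega}{\omega_B} - m = Q + A(\varphi + \dot\varphi) + Be^{-\eta t}$ back into the resulting differential inequality should produce a dominant $-AQ$ term on the right-hand side. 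At a maximum point of $Q$ in $X \times [T_I, T]$, combined with the decay $|\varphi + \dot\varphi| \le C_\epsilon e^{-\epsilon t}$ (for any fixed $\epsilon \in (\eta, 1/2)$) and the choice of $B$ large enough to absorb the remaining error term of size $e^{-\eta t_0}$, one hopes to conclude $Q \leq 0$ uniformly in $T$, which is equivalent to the desired estimate $\tr{\omega}{\omega_B} - m \le (AC_\epsilon + B)e^{-\eta t}$.

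The principal difficulty is that the Schwarz lemma inequality \eqref{schwarz1} does not come with an obvious favorable sign term of its own, so the exponential decay must be driven entirely by the $-A(\varphi + \dot\varphi)$ auxiliary term via \eqref{epdp}. The restriction $\eta < 1/4$ appears to arise from balancing the constraint $\eta < \epsilon$ (with $\epsilon < 1/2$) against a further factor of $1/2$ loss incurred in converting $C^0$ control of $\varphi + \dot\varphi$ into pointwise control of the second-order quantity $(\ddt - \Delta)(\varphi + \dot\varphi)$; if the straightforward test function above does not yield the sharp exponent, a refinement including a term of the form $D e^{-2\eta t}(\varphi + \dot\varphi)^2$ may be needed, whose squared factor doubles the effective decay rate. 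Engineering the precise combination of exponentials and constants so that the maximum principle cleanly produces the bound is the delicate part of the argument.
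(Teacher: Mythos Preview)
Your first test function does not work, and the issue is not a matter of sharpness of the exponent. With $Q = \tr{\omega}{\omega_B} - m - A(\varphi+\dot\varphi) - Be^{-\eta t}$ you get
\[
\left(\ddt - \Delta\right)Q \le C_1 - A(\tr{\omega}{\omega_B}-m) + B\eta e^{-\eta t},
\]
and at an interior maximum this forces only $A(\tr{\omega}{\omega_B}-m) \le C_1 + B\eta e^{-\eta t_0}$, hence $Q(x_0,t_0) \le C_1/A + O(e^{-\eta t_0})$. The constant $C_1$ from the Schwarz inequality \eqref{schwarz1} does not decay, so no choice of fixed $A,B$ makes $Q \le 0$; you only conclude $\tr{\omega}{\omega_B}-m \le C_1/A + (\text{decaying})$, which is a bounded but non-decaying estimate. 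Your suggested quadratic correction $De^{-2\eta t}(\varphi+\dot\varphi)^2$ produces only further small decaying terms and a gradient term, none of which can absorb the fixed constant $C_1$.

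The missing idea is that the coefficients in the test function must themselves be \emph{time-dependent and exponentially growing}, with the coefficient of $(\varphi+\dot\varphi)$ growing strictly faster than that of $(\tr{\omega}{\omega_B}-m)$. The paper takes
\[
Q = e^{\eta t}(\tr{\omega}{\omega_B}-m) - e^{2\eta t}(\varphi+\dot\varphi).
\]
Now \eqref{epdp} contributes $-e^{2\eta t}(\tr{\omega}{\omega_B}-m)$, while the Schwarz term is only $Ce^{\eta t}$; since $e^{2\eta t} \gg e^{\eta t}$ for large $t$, at a maximum one gets $e^{\eta t}(\tr{\omega}{\omega_B}-m) \le C$ directly. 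The bound $|e^{2\eta t}(\varphi+\dot\varphi)| \le C$ needed to control the cross term requires $2\eta < 1/2$ from Lemma~\ref{lemmaknown1}(iv), and this is the actual source of the restriction $\eta < 1/4$ --- not the mechanism you speculated.
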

\begin{proof}
Let
$$Q=e^{\eta t}(\tr{\omega}{\omega_B}-m)-e^{2\eta t}(\vp+\dot{\vp}).$$
Then from Lemma \ref{lemmaknown2},
$$\left(\frac{\de}{\de t}-\Delta\right)Q\leq \eta e^{\eta t}(\tr{\omega}{\omega_B}-m)+ Ce^{\eta t} - 2\eta e^{2\eta t}(\vp+\dot{\vp})-
e^{2\eta t}(\tr{\omega}{\omega_B}-m).$$
From Lemma \ref{lemmaknown1},  we have $\tr{\omega}{\omega_B}\leq C$ and $|e^{2\eta t}(\vp+\dot{\vp})|\leq C.$  Then
$$\left(\frac{\de}{\de t}-\Delta\right)Q\leq Ce^{\eta t}-
e^{2\eta t}(\tr{\omega}{\omega_B}-m),$$
and so at a maximum point of $Q$ we have an upper bound for $e^{\eta t}(\tr{\omega}{\omega_B} -m)$ and hence for $Q$.   The result follows.
\end{proof}

We will now establish part (ii) of Theorem \ref{mainthm1}.  We have  the following estimate in the fibers $X_y$ of $\pi$.

\begin{lemma} \label{fiberC1}
There is a constant $C>0$ such that for all $t \ge 0$ and all $y \in B$,
$$\| e^t \omega|_{X_y} \|_{C^1(X_y, \omega_0|_{X_y})} \le C, \quad e^t \omega|_{X_y} \ge C^{-1} \omega_0|_{X_y}.$$
\end{lemma}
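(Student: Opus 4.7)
The $C^0$ bounds, including the lower bound, follow directly from Lemma \ref{lemmaknown1}(i). Restricting the pointwise inequality $C^{-1}\ti{\omega} \le \omega \le C\ti{\omega}$ to a fiber $X_y$ and using that $\omega_B = \pi^*\omega_B$ vanishes on $X_y$, while $\of|_{X_y} = \omega_{\mathrm{SRF},y}$, yields $C^{-1}\omega_{\mathrm{SRF},y} \le e^{t}\omega|_{X_y} \le C\omega_{\mathrm{SRF},y}$. The family $\{\omega_{\mathrm{SRF},y}\}_{y \in B}$ depends smoothly on $y$, so by compactness of $B$ it is uniformly equivalent to $\omega_0|_{X_y}$, giving both the lower bound and the $C^0$ part of the $C^1$ estimate.

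For the first-derivative bound, the plan is to reduce to the local Calabi estimate of Proposition \ref{1localest}. Fix $y_0 \in B$ and $x_0 \in X_{y_0}$, and choose holomorphic coordinates $(z^1,\ldots,z^m,w^1,\ldots,w^n)$ on a neighborhood $V$ of $x_0$ adapted to $\pi$, so that the $z^i$ are pulled back from a chart at $y_0$ in $B$ and the $w^\alpha$ restrict to fiber coordinates on $X_{y_0}$. In these coordinates $\omega_B$ involves only the $z$-block with bounded components, while $\of$ is a smooth form on all of $X$; hence $\ti{\omega} = e^{-t}\of + (1-e^{-t})\omega_B$ is uniformly equivalent on a fixed polydisc $B_1(0) \subset V$ to $\omega_{E,t} = \omega_E^{(m)} + e^{-t}\omega_E^{(n)}$, with constants independent of $y_0$ and of $t \ge T_I$. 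Combined with Lemma \ref{lemmaknown1}(i) this gives $A^{-1}\omega_{E,t} \le \omega \le A\omega_{E,t}$, so Proposition \ref{1localest} (applied after a time shift to $[T_I,\infty)$, the constants depending on the uniformly bounded data at time $T_I$) yields $S = |\nabla^E g|^2_g \le Ce^t$ on $B_{1/2}(0)$; the estimate for $t \in [0, T_I]$ is automatic by smoothness.

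It remains to extract the tangential derivatives along $X_{y_0}$. The inverse metric $g^{a\bar b}$ inherits from $\omega_{E,t}$ the block structure with $w$-$w$ components of size $O(e^t)$ and all others $O(1)$. Keeping in the expansion of $S$ only the terms for which all six summation indices run over fiber directions produces a nonnegative contribution bounded below by $c\,e^{3t}\sum_{i,j,k\text{ fiber}}|\partial_{w^i}g_{w^j\bar w^k}|^2$ for a uniform $c > 0$. Hence $|\partial_{w^i}g_{w^j\bar w^k}| \le Ce^{-t}$ on $B_{1/2}(0)$, and multiplying by $e^t$ gives $|\partial_{w^i}(e^{t}g|_{X_{y_0}})_{w^j\bar w^k}| \le C$. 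Covering each fiber by a controlled number of such charts, uniform in $y$ by compactness of $B$, and comparing the coordinate Euclidean metric to $\omega_0|_{X_y}$, one obtains $\|e^t\omega|_{X_y}\|_{C^1(X_y,\omega_0|_{X_y})} \le C$ uniformly in $y$ and $t$.

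The main obstacle is precisely this translation of the ambient Calabi-type bound $S \le Ce^t$ into an estimate for the intrinsic Calabi-type quantity of the rescaled fiber metric $e^t\omega|_{X_y}$. It works because the dominant $O(e^t)$ entries of $g^{-1}$ sit exactly in the fiber-fiber block, so pure fiber-index contributions already saturate $S$; without this matching one would lose a factor of $e^t$ and obtain no useful information on the fiber derivatives.
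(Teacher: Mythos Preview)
Your approach is essentially the same as the paper's: both apply Proposition \ref{1localest} in local product coordinates to get $S=|\nabla^E g|^2_g\le Ce^t$, and then extract from this the bound on fiber derivatives of $e^t\omega|_{X_y}$. However, your extraction step has a small gap. The claim that the off-block entries of $g^{-1}$ are $O(1)$ is false: they are only $O(e^{t/2})$, by Cauchy--Schwarz applied to the positive matrix $g^{-1}\le A\,\omega_{E,t}^{-1}$. More importantly, ``keeping only the pure-fiber terms in the expansion of $S$'' does not produce a quantity bounded above by $S$: since $g^{-1}$ is not block-diagonal, the cross terms in the expansion of $S$ can be negative, so this partial sum need not be dominated by the full sum (already for a $1$-tensor one can write down counterexamples).

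The fix is immediate and is essentially what the paper does. From $A^{-1}\omega_{E,t}\le\omega$ one gets $g^{-1}\ge A^{-1}\omega_{E,t}^{-1}$ as Hermitian forms, hence for the covariant $3$-tensor $T=\nabla^E g$ one has
\[
S=|T|^2_g\ \ge\ A^{-3}\,|T|^2_{\omega_{E,t}}.
\]
Since $\omega_{E,t}$ is diagonal, the right-hand side is a sum of nonnegative terms, one of which is $e^{3t}\sum_{a,b,c\text{ fiber}}|\partial_{w^a}g_{w^b\ov{w}^c}|^2$. This yields $e^{3t}\sum|\partial_{w^a}g_{w^b\ov{w}^c}|^2\le CS\le Ce^t$, hence $|\partial_{w^a}(e^tg_{w^b\ov{w}^c})|\le C$, exactly what you claimed. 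The paper phrases the same computation as the chain
\[
|\nabla^E(e^t\omega|_{X_y})|^2_{\omega_E}=e^{-t}|\nabla^E(\omega|_{X_y})|^2_{e^{-t}\omega_E}\le Ce^{-t}|\nabla^E(\omega|_{X_y})|^2_{\omega|_{X_y}}\le Ce^{-t}S\le C,
\]
the penultimate inequality being the general fact that restricting a covariant tensor to a subspace (with the induced metric) can only decrease its norm.
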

\begin{proof}
Fix a point $y\in B$ and a point $x\in X_{y}$.  Since $\pi:X \rightarrow B$ is a holomorphic submersion, we can find a  $U\subset X$ a local holomorphic product coordinate chart for $\pi$ centered at $x$,
which equals the unit polydisc in $\mathbb{C}^{n+m}$ (see e.g. \cite[p.60]{Ko}).
Thanks to part (i) of Lemma \ref{lemmaknown1}, we see that the hypotheses of Proposition \ref{1localest} are satisfied, and so we conclude that on the half-sized polydisc we have
$$| \nabla^E \omega|^2_{\omega} \leq Ce^t,$$
where $\nabla^E$ is the covariant derivative of the Euclidean metric $\omega_E$ on $U$.  We may assume that $\omega_E$ is uniformly equivalent to $\omega_0$ on $U$. Again using Lemma \ref{lemmaknown1}.(i), we see that on $U$ the metric $\omega|_{X_y}$ is uniformly equivalent to
$e^{-t}\omega_E|_{X_y}$, and so on $U$ we have (cf. \cite[Section 3]{To} or \cite[Lemma  3.6.9]{SW})
$$| \nabla^E (e^t\omega|_{X_y})|^2_{\omega_E}=e^{-t}|\nabla^E (\omega|_{X_y})|^2_{e^{-t}\omega_E}\qquad \qquad$$ $$ \leq Ce^{-t}|\nabla^E(\omega|_{X_y})|^2_\omega\leq Ce^{-t}|\nabla^E \omega|^2_{\omega}\leq C.$$
Furthermore, all these estimates are uniform in $y\in B$.
The uniform $C^1$ bound as well as the uniform lower bound for $e^t \omega|_{X_{y}}$
follows at once.
\end{proof}

Thanks to Lemma \ref{fiberC1}, for any fixed $y\in B$ and $0<\alpha<1$, given any sequence $t_k\to\infty$ we can extract a subsequence such that
$$e^{t_k}\omega|_{X_y}\to\omega_{\infty,y},$$
in $C^\alpha$ on $X_y$, where $\omega_{\infty,y}$ is a $C^\alpha$ K\"ahler metric on $X_y$ cohomologous to $\omega_0|_{X_y}$.

We can now prove the last statement in Theorem \ref{mainthm1}.(ii).
\begin{lemma}\label{1conv}
Given any $y\in B$, as $t\to\infty $ we have that
$$e^{t}\omega|_{X_y}\to\omega_{{\rm SRF},y},$$
in $C^\alpha$ on $X_y$, for any $0<\alpha<1$.
\end{lemma}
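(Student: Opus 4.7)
The argument proceeds by compactness and uniqueness. Lemma~\ref{fiberC1} gives a uniform $C^1(X_y,\omega_0|_{X_y})$ bound on $e^t\omega|_{X_y}$, so Arzel\`a--Ascoli lets us extract from any sequence $t_k\to\infty$ a subsequence along which $e^{t_k}\omega|_{X_y}\to\omega_{\infty,y}$ in $C^\alpha$, with $\omega_{\infty,y}$ a $C^\alpha$ K\"ahler metric on $X_y$. Because $\frac{\partial}{\partial t}[\omega(t)]=-c_1(X)-[\omega(t)]$ along the flow and $c_1(X)|_{X_y}=0$ (since $X_y$ is Calabi--Yau), the class $[e^t\omega|_{X_y}]$ is constant in $t$ and equals $[\omega_0|_{X_y}]$, so $[\omega_{\infty,y}]=[\omega_0|_{X_y}]$. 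By Yau's uniqueness theorem, it then suffices to show $\omega_{\infty,y}^n=\omega_{\textrm{SRF},y}^n$ pointwise on $X_y$; a standard subsequence argument upgrades this to convergence of the whole family.

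For the pointwise volume identity, fix $x\in X_y$ and choose local holomorphic product coordinates $(z,w)\in\mathbb{C}^m\oplus\mathbb{C}^n$ for $\pi$ centered at $x$, normalized so that at $x$ the base-direction block of $\omega_B$ and the fiber-direction block of $\of$ are both the identity. Writing the matrix of $\omega$ at $x$ in block form $G=\begin{pmatrix}A&B\\B^*&D\end{pmatrix}$, Lemma~\ref{lemmaknown1}(i) gives $A$ and $e^tD$ bounded above and below by uniform positive constants, and together with the upper bound $\omega\le C\ti{\omega}$ it forces $B=O(e^{-t/2})$. Two ingredients combine at this stage. First, expanding $\ti{\omega}^{n+m}=\binom{n+m}{n}e^{-nt}\of^n\wedge\omega_B^m\,(1+O(e^{-t}))$ and inserting the Monge--Amp\`ere equation \eqref{pcma1} together with $|\vp+\dot\vp|=O(e^{-\ve t})$ from Lemma~\ref{lemmaknown1}(iv) yields $e^{nt}\det G=1+o(1)$ at $x$. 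Second, since $\omega_B$ is pulled back, the Schur-complement formula gives $\tr{\omega}{\omega_B}=\mathrm{tr}\bigl((A-BD^{-1}B^*)^{-1}\bigr)$, so Lemma~\ref{1con} combined with AM--GM applied to the positive eigenvalues of $(A-BD^{-1}B^*)^{-1}$ gives $\det(A-BD^{-1}B^*)\ge1-o(1)$. The identity $\det G=\det(A-BD^{-1}B^*)\det D$ then yields
\begin{equation*}
e^{nt}\det D\le 1+o(1),
\end{equation*}
which in our normalization translates into the pointwise fiber estimate
\begin{equation*}
(e^t\omega|_{X_y})^n\le(1+o(1))\,\omega_{\textrm{SRF},y}^n \quad\text{on } X_y,
\end{equation*}
with $o(1)$ uniform in $x$ because all the intermediate estimates are.

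Passing to the subsequential limit gives $\omega_{\infty,y}^n\le\omega_{\textrm{SRF},y}^n$ pointwise. But $[\omega_{\infty,y}]=[\omega_{\textrm{SRF},y}]$, hence $\int_{X_y}\omega_{\infty,y}^n=\int_{X_y}\omega_{\textrm{SRF},y}^n$, and the pointwise inequality can hold only if equality holds everywhere, giving $\omega_{\infty,y}^n=\omega_{\textrm{SRF},y}^n$ on $X_y$. Yau's uniqueness for the Calabi--Yau Monge--Amp\`ere equation in the K\"ahler class $[\omega_0|_{X_y}]$, applicable in the $C^\alpha$ setting, then gives $\omega_{\infty,y}=\omega_{\textrm{SRF},y}$, as desired. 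The main obstacle is the middle paragraph: converting the scalar volume asymptotics and the trace bound into a clean pointwise fiber comparison requires the block-matrix analysis, the Schur complement, and AM--GM; without the trace bound from Lemma~\ref{1con} one cannot rule out that the off-diagonal contribution $BD^{-1}B^*$ corrupts the fiber volume.
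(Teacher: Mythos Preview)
Your proof is correct and follows essentially the same route as the paper's. The paper writes the fiber volume ratio invariantly as
\[
f=\frac{(e^t\omega|_{X_y})^n}{\omega_{\mathrm{SRF},y}^n}=e^{\vp+\dot\vp}\binom{n+m}{n}\frac{\omega^n\wedge\omega_B^m}{\omega^{n+m}},
\]
and bounds the second factor by $(\tr{\omega}{\omega_B}/m)^m$ via AM--GM on the eigenvalues of $\omega_B$ relative to $\omega$; your Schur-complement identity $\det G=\det(A-BD^{-1}B^*)\det D$ together with $(G^{-1})_{11}=(A-BD^{-1}B^*)^{-1}$ is exactly this computation in block-matrix notation, so the key one-sided bound $f\le 1+o(1)$ is obtained identically in both arguments. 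The only genuine difference is in the endgame: the paper feeds the one-sided bound, the fiber integral identity, and the $C^1$ fiber bound into the auxiliary Lemma~\ref{1conve} to get \emph{two-sided} uniform convergence $|f-1|\le Ce^{-\eta t}$ (which it needs later for \eqref{mtii}), whereas you pass directly to a $C^\alpha$ subsequential limit and use the cohomological integral equality to force $\omega_{\infty,y}^n=\omega_{\mathrm{SRF},y}^n$. Your shortcut is cleaner for this lemma alone but does not yield the quantitative volume-form estimate \eqref{1volcon}; note also that your remark $B=O(e^{-t/2})$ is correct but not actually used in your subsequent steps, and the phrase ``applicable in the $C^\alpha$ setting'' should be read (as the paper does) via a bootstrap showing $\omega_{\infty,y}$ is smooth before invoking Calabi's uniqueness.
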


\begin{proof}
Write
\[\begin{split}
(e^t\omega|_{X_y})^n&= e^{nt} \frac{(\omega|_{X_y})^n}{(\omega_{{\rm SRF},y})^n}(\omega_{{\rm SRF},y})^n=e^{nt}\frac{\omega^n\wedge\omega_B^{m}}{\omega_{{\rm SRF}}^n\wedge\omega_B^m}(\omega_{{\rm SRF},y})^n\\
&=e^{nt}\binom{n+m}{n}\frac{\omega^n\wedge\omega_B^{m}}{\Omega}(\omega_{{\rm SRF},y})^n\\
&=e^{\vp+\dot{\vp}}\binom{n+m}{n}\frac{\omega^n\wedge\omega_B^{m}}{\omega^{n+m}}(\omega_{{\rm SRF},y})^n.
\end{split}\]
Consider the function on $X$, depending on $t$, defined by
$$f=e^{\vp+\dot{\vp}}\binom{n+m}{n}\frac{\omega^n\wedge\omega_B^{m}}{\omega^{n+m}},$$
which when restricted to the fiber $X_y$ equals
\begin{equation}\label{1fib}
f|_{X_y}=\frac{(e^t\omega|_{X_y})^n}{(\omega_{{\rm SRF},y})^n}.
\end{equation}
We have that
\begin{equation}\label{1intt}
\int_{X_y}f(\omega_{{\rm SRF},y})^n=\int_{X_y} (e^t\omega|_{X_y})^n=\int_{X_y}(\omega_0|_{X_y})^n=\int_{X_y}(\omega_{{\rm SRF},y})^n,
\end{equation}
so that $f-1$ satisfies condition (b) of Lemma \ref{1conve}. From Lemma \ref{lemmaknown1}.(iv), for $t$ large we have that
\begin{equation}\label{1expd}
|e^{\vp+\dot{\vp}}-1|\leq Ce^{-\eta t},
\end{equation}
 as long as $\eta<1/2$.
On the other hand, if at a point on $X_y$ we choose coordinates so that $\omega$ is the identity and $\omega_B$ is diagonal with eigenvalues $\lambda_1,\dots,\lambda_m$, then
at that point
\begin{equation}\label{1amgm}
\binom{n+m}{n}\frac{\omega^n\wedge\omega_B^{m}}{\omega^{n+m}}=\prod_j \lambda_j\leq (\sum_j\lambda_j/m)^{m}=(\tr{\omega}{\omega_B}/m)^{m}.
\end{equation}
By Lemma \ref{1con}, $\sup_X\tr{\omega}{\omega_B}/m\leq 1+Ce^{-\eta t},$ for $\eta<1/4$.  Then \eqref{1expd} and \eqref{1amgm} imply
that $\sup_X f\leq 1+Ce^{-\eta t}$, for some $\eta>0$. This shows that $f-1$ satisfies condition (c) of Lemma \ref{1conve}.
Lastly, $f-1$ also satisfies condition (a) because of \eqref{1fib} and the estimates in Lemma  \ref{fiberC1}. Therefore Lemma \ref{1conve} shows that $f$ converges to $1$ uniformly on $X$ and exponentially fast as $t\to\infty$, i.e.
\begin{equation}\label{1volcon}
\|(e^{t}\omega|_{X_y})^n-\omega_{{\rm SRF},y}^n\|_{C^0(X_y,\omega_0|_{X_y})}\leq Ce^{-\eta t},
\end{equation}
for some $\eta>0$, for all $y\in B$.

If we choose a sequence $t_k\to\infty$ such that $e^{t_k}\omega|_{X_y}\to\omega_{\infty,y},$ in $C^\alpha$ on $X_y$, then the $C^\alpha$ metric $\omega_{\infty,y}$ on $X_y$ satisfies
$$\omega_{\infty,y}^n=\omega_{{\rm SRF},y}^n.$$
Standard bootstrap estimates give that $\omega_{\infty,y}$ is a smooth K\"ahler metric, and the uniqueness of the solution of this equation (originally due to Calabi) implies that
$\omega_{\infty,y}=\omega_{{\rm SRF},y}$. From this it  follows that $e^{t}\omega|_{X_y}\to\omega_{{\rm SRF},y}$ in $C^\alpha$, as required.
\end{proof}

Note that in the previous lemma, since we only have derivative bounds on $\omega$ in the fiber directions, we are not able to conclude that the $C^{\alpha}$ convergence is uniform in $y \in B$.  However, we can now establish uniformity of the fiberwise convergence in the $C^0$ norm and complete the
 proof of Theorem \ref{mainthm1}.(ii).  Namely, we will prove the estimate \eqref{mtii}: $$\| e^t \omega|_{X_y} - \omega_{\textrm{SRF},y} \|_{C^0(X_y, \omega_0|_{X_y})} \le Ce^{-\eta t}, \quad \textrm{for all } y \in B.$$

\begin{proof}[Proof of (\ref{mtii})]
Consider the function
$$f|_{X_y}=\frac{(e^t\omega|_{X_y})\wedge \omega_{{\rm SRF},y}^{n-1}}{\omega_{{\rm SRF},y}^n}.$$
While this is defined on $X_y$, it is clearly smooth in $y$ and so defines a smooth function $f$ on $X$, which equals
\[\begin{split}f&=\frac{(e^t\omega)\wedge \omega_{{\rm SRF}}^{n-1}\wedge\omega_B^m}{\omega_{{\rm SRF}}^n\wedge\omega_B^m}=\binom{n+m}{n}\frac{(e^t\omega)\wedge \omega_{{\rm SRF}}^{n-1}\wedge\omega_B^m}{\Omega}\\
&=e^{\vp+\dot{\vp}}\binom{n+m}{n}\frac{\omega\wedge (e^{-t}\omega_{{\rm SRF}})^{n-1}\wedge\omega_B^m}{\omega^{n+m}}.
\end{split}\]
We wish to show that $f$ converges to $1$ uniformly on $X$ and exponentially fast.
By the arithmetic-geometric means inequality we have
$$(f|_{X_y})^n\geq \frac{(e^t\omega|_{X_y})^n}{\omega_{{\rm SRF},y}^n},$$
and the right hand side converges uniformly to $1$ and exponentially fast by \eqref{1volcon}.
Therefore $1-f$ satisfies condition (c) in Lemma \ref{1conve}. Condition (b) is trivial, and condition (a) holds thanks to Lemma \ref{fiberC1}. Therefore we conclude that
\begin{equation} \label{2}
\|(e^t\omega|_{X_y})\wedge \omega_{{\rm SRF},y}^{n-1}-\omega_{{\rm SRF},y}^n\|_{C^0(X_y,\omega_0|_{X_y})}\leq Ce^{-\eta t},
\end{equation}
for some $C,\eta>0$ and all $y\in B$.

Choosing local fiber coordinates at a point  $x \in X_y$ so that $\omega_{\textrm{SRF},y}$ is the identity and $e^t \omega|_{X_y}$ is given by the positive definite $n \times n$ matrix $A$ we see that (\ref{1volcon}) and (\ref{2}) give $| \det A-1 | \le Ce^{-\eta t}$ and $| \textrm{tr} \, A - n | \le Ce^{-\eta t}$ respectively.  Applying Lemma \ref{lemmamatrix} we obtain $\| A - I \| \le Ce^{-\eta t/2}$. This implies the estimate \eqref{mtii}.
\end{proof}

We now turn to the proof of part (i) of Theorem \ref{mainthm1}.

\begin{proof}[Proof of Theorem \ref{mainthm1}.(i)]
Fix a point $x \in X$ with $y=\pi(x) \in B$.  We have
\begin{equation} \label{troto}
\tr{\omega}{\ti{\omega}}=\tr{\omega}{\omega_B}+\tr{\omega}{(e^{-t}\omega_{{\rm SRF},y})}+e^{-t}(-\tr{\omega}{\omega_B}+\tr{\omega}{(\of-\omega_{{\rm SRF},y})}).
\end{equation}
From Lemma \ref{lemmaknown1}, $\tr{\omega}{\omega_B}$ is uniformly bounded.  We claim that
\begin{equation} \label{trdiff}
| \tr{\omega}{(\of-\omega_{{\rm SRF},y})} | \le C e^{t/2}.
\end{equation}
Indeed, since $\pi$ is a submersion there are coordinates $z^1, \ldots, z^{m+n}$ near $x$ and $z^1, \ldots, z^m$ near $\pi(x)$ so that $\pi$ is given by the map $(z^1, \ldots, z^{m+n}) \mapsto (z^1, \ldots, z^m)$ (see e.g. \cite[p.60]{Ko}).  In particular, $z^{m+1}, \ldots, z^{m+n}$ restrict to local coordinates along the fibers of $\pi$.
Then we can write at $x$,
$$\of-\omega_{{\rm SRF},y} = 2\textrm{Re} \left( \sqrt{-1} \sum_{\alpha =1}^{m} \sum_{j=1}^{m+n} \Psi_{\alpha\ov{j}} dz^{\alpha} \wedge d\ov{z}^{j}\right),$$
for  $\Psi_{\alpha \ov{j}} \in \mathbb{C}$. We can do this because the form $\of-\omega_{{\rm SRF},y}$ vanishes when restricted to the fiber $X_y$.  Then
$$\left| \tr{\omega}{(\of-\omega_{{\rm SRF},y})} \right| = 2 \left| \textrm{Re} \left( \sum_{\alpha=1}^m \sum_{j=1}^{m+n} g^{\alpha \ov{j}} \Psi_{\alpha \ov{j}}\right)  \right| \le Ce^{t/2},$$
since thanks to estimate in Lemma \ref{lemmaknown1}.(i) we have that $|g^{j\ov{j}}|\leq Ce^t$ and $|g^{\alpha\ov{\alpha}}|\leq C$ whenever $1 \le \alpha \le m$, and so the Cauchy-Schwarz inequality gives $|g^{\alpha \ov{j}}| \le |g^{\alpha\ov{\alpha}} g^{j\ov{j}}|^{\frac{1}{2}}\leq  C e^{t/2}.$
Moreover, by compactness, we may assume that the constant $C$ is independent of the point $x$  and the choice of coordinates. Thus the claim (\ref{trdiff}) is proved.

Hence
\begin{equation}\label{1two}
|e^{-t}(-\tr{\omega}{\omega_B}+\tr{\omega}{(\of-\omega_{{\rm SRF},y})})|\leq Ce^{-t/2}.
\end{equation}
On the other hand, the estimate (\ref{mtii}) implies that
\begin{equation} \label{1three}
\tr{\omega}{(e^{-t}\omega_{{\rm SRF},y})}=\tr{(e^t\omega|_{X_y})}{\omega_{{\rm SRF},y}} \le n+ Ce^{-\eta t}.
\end{equation}
From (\ref{troto}), (\ref{1two}), (\ref{1three}) and Lemma \ref{1con}, we have
\begin{equation}\label{1one}
\tr{\omega}{\ti{\omega}}\leq n+m+Ce^{-\eta t},
\end{equation}
for some $\eta>0$.  Moreover, the constants $C$ and $\eta$ are independent of the choice of $x \in X$.
On the other hand, for $t \ge T_I$,
\begin{equation} \label{eqndetr}
\begin{split}
\frac{\ti{\omega}^{n+m}}{\omega^{n+m}} = {} & \frac{(e^{-t}\of+(1-e^{-t})\omega_B)^{n+m}}{\binom{n+m}{n}\of^n\wedge\omega_B^m} e^{nt}e^{-\vp-\dot{\vp}}  \\
\geq {} &  e^{-\vp-\dot{\vp}}- Ce^{-t} \\
\ge {} &  1- C'e^{-\eta t},
\end{split}
\end{equation}
for some $\eta>0$.
 Here we used Lemma \ref{lemmaknown1}.(iv) for the last inequality, and the fact that for $k\geq 1$ the term
$$\frac{e^{-(n+k)t}\of^{n+k}\wedge\omega_B^{m-k}}{\of^n\wedge\omega_B^m}e^{nt},$$
is of the order of $e^{-kt}$.

From (\ref{1one}) and (\ref{eqndetr}) we can apply Lemma \ref{lemmamatrix} (choose coordinates so that $\omega$ is the identity and $\tilde{\omega}$ is given by a matrix $A$) to obtain
$$\| \omega - \tilde{\omega} \|_{C^0(X, \omega)} \le C e^{-\eta t},$$
for a uniform $C, \eta>0$.  Since $\omega \le C \omega_0$, we have
$$\| \omega - \tilde{\omega} \|_{C^0(X, \omega_0)} \le C e^{-\eta t},$$
and since $\tilde{\omega}= \omega_B + e^{-t}(\of - \omega_B)$ we obtain the estimate (\ref{mti}) as required.
\end{proof}

Finally, part (iii) of Theorem \ref{mainthm1} follows from part (i) and the definition of Gromov-Hausdorff convergence, for example using \cite[Lemma 9.1]{TWY} (note that our submersion $\pi:X \rightarrow B$ is a smooth fiber bundle).

\section{The K\"ahler-Ricci flow in the case of singular fibers} \label{sectionkrfsing}

In this section we give the proof of Theorem \ref{mainthm2}.

\subsection{Preliminaries: the case of singular fibers} \label{sectionprelim2}
Let us first recall the general setup of Song-Tian \cite{ST,ST2,ST3} where our results will apply.
Let $(X^{m+n},\omega_0)$ be a compact K\"ahler manifold with canonical bundle $K_X$ semiample and $0<m:=\kappa(X) < \dim X$.

The map $\pi: X \rightarrow B$ is a surjective holomorphic map given by sections of $H^0(X, K_X^{\ell})$ where $B^m$ is a normal projective variety and the generic fiber
 $X_y=\pi^{-1}(y)$ of $\pi$ has $K_{X_y}^\ell$ holomorphically trivial (so it is in particular a Calabi-Yau manifold of dimension $n$).
Recall that we denote by $S'\subset B$ the singular set of $B$ together with the set of critical values of $\pi$, and we define $S=\pi^{-1}(S') \subset X$.

Since the map $\pi:X\to B\subset \mathbb{P}H^0(X,K_X^\ell)$ is induced by the space of global sections of $K_X^\ell$, we have that $\pi^*\mathcal{O}(1)=K_X^\ell$. Therefore, if we let $\chi$ be $\frac{1}{\ell}\omega_{{\rm FS}}$ on $\mathbb{P}H^0(X,K_X^\ell)$, we have that $\pi^*\chi$  (later, denoted by $\chi$) is a smooth semipositive representative of $-c_1(X)$.  Here, $\omega_{\textrm{FS}}$ denotes the Fubini-Study metric.  We will also denote by $\chi$ the restriction of $\chi$ to $B\backslash S'$.

As in the submersion case, we can define the semi Ricci-flat form $\of$ on $X\backslash S$, so that $\omega_{{\rm SRF},y}=\of|_{X_y}$ is the unique Ricci-flat K\"ahler metric on $X_y$ cohomologous to $\omega_0|_{X_y}$, for all $y\in B\backslash S'$.
Let $\Omega_1$ be the volume form on $X$ with
$$\ddbar \log \Omega_1 =\chi, \quad \int_X \Omega_1=\binom{n+m}{n}\int_X \omega_0^n\wedge\chi^m.$$
Define a function $F$ on $X\backslash S$ by
\begin{equation} \label{eqnvv}
F = \frac{\Omega_1}{\binom{n+m}{n} \omega_{\textrm{SRF}}^n \wedge \chi^m}.
\end{equation}
As in the proof of Theorem \ref{genke}, one sees easily that $F$ is constant along the fibers $X_y$, $y\in B\backslash S'$, so it descends to a smooth function $F$ on $B\backslash S'$.
Then \cite[Section 3]{ST2} shows that the Monge-Amp\`ere equation
\begin{equation} \label{eqnv}
(\chi+\ddbar v)^m=Fe^v\chi^m,
\end{equation}
has a unique solution $v$ (in the sense of Bedford-Taylor \cite{BT}) which is a bounded $\chi$-plurisubharmonic function on $B$, smooth on $B\backslash S'$. The $L^\infty$ bound for $v$ uses the pluripotential estimates of Ko\l odziej \cite{Kol} (see also the survey \cite{PSoS}). We define $$\omega_B:=\chi+\ddbar v,$$ which is a smooth K\"ahler metric on $B\backslash S'$, and satisfies the twisted K\"ahler-Einstein equation
$$\Ric(\omega_B)=-\omega_B+\omega_{{\rm WP}},$$
where $\omega_{{\rm WP}}$ is the smooth semipositive Weil-Petersson form on $B\backslash S'$  constructed  in Section \ref{subsectionsub}.
We also define
$$\Omega=\binom{n+m}{n}\of^n\wedge \omega_B^m,$$
and using \eqref{eqnvv} and \eqref{eqnv} we observe that
$$\Omega=\Omega_1\frac{\omega_B^m}{F\chi^m}=\Omega_1e^v,$$
which is thus a bounded strictly positive volume form on $X$, smooth on $X\backslash S$. As in the proof of Theorem \ref{genke}, we can see that
$$\ddbar \log \Omega = \omega_B,$$
holds on $X\backslash S$.

Let then $\omega=\omega(t)$ be a solution of the K\"ahler-Ricci flow
$$\frac{\de}{\de t}\omega=-\Ric(\omega)-\omega,\quad \omega(0)=\omega_0,$$
which exists for all time.
We gather together some facts about the K\"ahler-Ricci flow in this case which are already known, or which follow easily from the existing literature.

We begin by writing the flow as a parabolic complex Monge-Amp\`ere equation.  Since $\Omega$ is not smooth everywhere, it is more convenient to use the smooth volume form $\Omega_1$. Similarly, since the limiting metric $\omega_B$ is not smooth everywhere, we replace it by $\chi$ which is smooth on the whole of $X$.

We therefore define the reference metrics
$$\hat{\omega}=e^{-t}\omega_0+(1-e^{-t})\chi,$$
which are K\"ahler for all $t\geq 0$, and for all $t\geq 0$ we can write $\omega=\hat{\omega}+\ddbar\vp$, and $\vp(0)=0$.
Then the K\"ahler-Ricci flow is equivalent to the parabolic complex Monge-Amp\`ere equation
\begin{equation} \label{pcma2}
\frac{\de}{\de t}\vp=\log\frac{e^{nt}(\hat{\omega}+\ddbar\vp)^{n+m}}{\Omega_1}-\vp, \quad \vp(0)=0,\quad \hat{\omega}+\ddbar\vp>0.
\end{equation}
It is convenient to introduce, following \cite[Section 2]{To}, a smooth nonnegative function $\sigma$ on $X$ with zero locus exactly equal to $S$ and with
\begin{equation}\label{stupidbounds}
\sigma\leq 1, \quad \mn\de\sigma\wedge\db\sigma\leq C\chi,\quad -C\chi\leq \ddbar\sigma\leq C\chi,
\end{equation}
for some constant $C$ (in the case when $S$ is empty, i.e. when $\pi$ is a submersion and $B$ nonsingular, we can set $\sigma=1$).
Explicitly, let $\mathcal{I}$ be the ideal sheaf of $S'$ inside $\mathbb{P}H^0(X,K_X^\ell)$, let $\{U_j\}$ be an open cover of $\mathbb{P}H^0(X,K_X^\ell)$ such that
on each $U_j$ the ideal $\mathcal{I}$ is generated by finitely many holomorphic functions $\{f_{j,k}\}$, let $\rho_j$ be a partition of unity subordinate to this cover, and define
a smooth function on $\mathbb{P}H^0(X,K_X^\ell)$ by
\begin{equation}\label{constrsigma}
\sigma=C^{-1}\sum_{j,k}\rho_j|f_{j,k}|^2,
\end{equation}
where $C$ is a constant chosen so that $\sigma\leq 1$. Then the pullback of $\sigma$ to $X$ is the function that we need.
We have the following lemma, which is an analogue of Lemma \ref{lemmaknown1}.

\begin{lemma} \label{lemmape2}
Let $\omega=\omega(t)$ solve the K\"ahler-Ricci flow as above, and write $\varphi=\varphi(t)$ for the solution of (\ref{pcma2}).  Then
\begin{enumerate}
\item[(i)] For any compact set $K\subset X\backslash S$ there is a constant $C=C(K)$ such that
$C^{-1}\hat{\omega}\leq \omega\leq C\hat{\omega}$
 on $K$ for all $t\geq 0$.
\item[(ii)] There exists $\lambda>0$ and a positive decreasing function $h(t)$ which tends to zero as $t \rightarrow \infty$ such that
\begin{equation} \label{vpv}
\sup_X |\sigma^\lambda (\vp-v)|\leq h(t), \quad \textrm{for all }t \ge 0.
\end{equation}
\item[(iii)] There exists $C$ such that $|R| \le C$ on $X \times [0,\infty)$.
\item[(iv)]  There exists $C$ such that, for $h$,  $\lambda$ as in (ii),
\begin{equation}\label{dot}
\sup_X |\sigma^\lambda (\vp+\dot{\vp}-v)|\leq Ch(t)^{\frac{1}{2}}, \quad \textrm{for all } t \ge 0.
\end{equation}
\end{enumerate}
\end{lemma}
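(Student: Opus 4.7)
My plan is to dispatch parts (i)--(iii) by adapting arguments already in the literature, and to deduce (iv) from (ii) and (iii) by an interpolation-type contradiction.

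For (i), the strategy is to run a parabolic Schwarz/second-order estimate with the auxiliary cut-off $\sigma$ from \eqref{constrsigma}. Following \cite{ST,ST2,FZ}, one applies the maximum principle to a quantity of the form $\log \tr{\omega}{\hat{\omega}} - A\vp + \ve\log\sigma$ to get $\tr{\omega}{\hat{\omega}} \le C\sigma^{-N}$ for some $N$, and combines this with a lower bound on $\omega^{n+m}/\hat{\omega}^{n+m}$ (coming from an $L^\infty$-bound on $\dot\vp$) to produce the two-sided equivalence on any compact $K\subset X\backslash S$. For (ii), one adapts the Song-Tian comparison argument \cite[Section 3]{ST2} (see also \cite{ST4}): apply the parabolic maximum principle to $\sigma^{\lambda}(\vp - v)$ perturbed by a time-decaying error, using that $\ddbar\log\Omega = \omega_B$ on $X\backslash S$ and that $v$ solves \eqref{eqnv}. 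This produces qualitative convergence with some rate $h(t)\downarrow 0$, and no sharper rate is available in this generality. For (iii), it suffices to invoke \cite{ST3}, whose uniform scalar curvature bound applies verbatim under our semiampleness hypothesis.

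The new content is (iv). Differentiating \eqref{pcma2} in $t$ and using $\ddt\omega = \ddbar\dot\vp = -\Ric(\omega)-\omega$ gives the two identities
\[
\ddt\dot\vp + \dot\vp = -R - m, \qquad \ddt(\vp + \dot\vp) = -R - m.
\]
By (iii) the right-hand sides are uniformly bounded, so integrating the first ODE in $t$ yields $|\dot\vp|\le C$ on $X\times[0,\infty)$, and then $|\ddt\dot\vp|\le C_0$ for a uniform $C_0$. Writing $\sigma^\lambda(\vp+\dot\vp-v) = \sigma^\lambda(\vp-v) + \sigma^\lambda\dot\vp$ and using (ii) to bound the first summand by $h(t)\le h(t)^{1/2}$, it is enough to show $\sup_X|\sigma^\lambda\dot\vp|(\cdot,t)\le C\,h(t)^{1/2}$. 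I would prove this by contradiction, mimicking Lemma \ref{lemmaknown1}.(iv). If it failed, pick $(x_k,t_k)$ with $t_k\to\infty$ and $\alpha_k|\dot\vp(x_k,t_k)|\ge k\,h(t_k)^{1/2}$, where $\alpha_k := \sigma^\lambda(x_k)\in(0,1]$. Setting $A_k := k\,h(t_k)^{1/2}/\alpha_k$, the bound $|\ddt\dot\vp|\le C_0$ propagates $|\dot\vp(x_k,\cdot)|\ge A_k/2$ over the time interval $[t_k,\, t_k + A_k/(2C_0)]$, so integrating $\dot\vp$ in $t$ at the spatial point $x_k$ gives
\[
\bigl|\vp(x_k,\, t_k + A_k/(2C_0)) - \vp(x_k,t_k)\bigr| \ge \frac{A_k^2}{4C_0}.
\]
On the other hand, applying (ii) at $x_k$ at both endpoints (and using that $h$ is decreasing) bounds the same quantity above by $2h(t_k)/\alpha_k$. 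Comparing,
\[
\frac{k^2 h(t_k)}{4 C_0\,\alpha_k^2} \le \frac{2 h(t_k)}{\alpha_k}, \qquad \textrm{i.e., }\qquad k^2 \le 8C_0\,\alpha_k \le 8C_0,
\]
a contradiction for $k$ large. (The case $A_k$ larger than the uniform bound on $|\dot\vp|$ cannot occur, since then the hypothesis $|\dot\vp(x_k,t_k)|\ge A_k$ would already fail.)

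The main obstacle is not the contradiction argument itself but the availability of the quantitative $\sigma^\lambda$-weighted rate $h(t)$ in (ii); this is precisely what fixes the exponent $1/2$ in (iv). In the submersion case of Section \ref{sectionkrfsubmersion} one can take $h(t)=Ce^{-\eta t}$ and the same interpolation yields exponential decay in (iv); here, in the presence of singular fibers, the Song-Tian comparison only furnishes qualitative convergence, which is why (iv) is stated in terms of the unspecified function $h$.
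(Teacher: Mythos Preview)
Your proposal is correct and follows essentially the same approach as the paper: parts (i)--(iii) are dispatched by citing \cite{FZ,To,ST2,ST3} as the paper does, and your contradiction argument for (iv) is exactly the paper's (set $\gamma_k=A_k/(2C_0)$ and compare), including the reduction to bounding $\sigma^\lambda\dot\vp$ and the use of $|\ddt\dot\vp|\le C_0$ from the scalar curvature bound. The only cosmetic difference is that you treat both signs at once via absolute values, whereas the paper does the upper bound and remarks that the lower bound is similar.
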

\begin{proof}
Part (i) is proved in \cite{FZ} (and is a direct adaptation of \cite{To}, see also \cite{ST} for the case of elliptic surfaces).
Part (ii) is due to Song-Tian (see the proof of \cite[Proposition 5.4]{ST2}).  Note that we are free to increase the value of $\lambda$, at the expense of changing the function $h(t)$.  Part (iii) was proved in \cite{ST3}.

The proof of (iv) is almost identical to the proof of Lemma \ref{lemmaknown1}.(iv).  Indeed, it is enough to show that
$\sup_X |\sigma^\lambda \dot{\vp}|\leq Ch(t)^{\frac{1}{2}}$.
We have as before $\ddt{} \dot{\varphi} = -R - m -\dot{\varphi}$ and hence $|\dot{\varphi}|,  |\partial \dot{\varphi}/\partial t| \le C_0$ for some uniform constant $C_0$.
Suppose for a contradiction that we do not have the desired upper bound of $\sigma^{\lambda}\dot{\varphi}$.  Then there exists a sequence $(x_k, t_k) \in X \times [0,\infty)$ with $t_k \rightarrow \infty$ as $k \rightarrow \infty$ such that
$$\sigma^{\lambda}(x_k)\dot{\varphi}(x_k, t_k) \ge k h(t_k)^{\frac{1}{2}}.$$
In particular, $x_k\not\in S.$
Put $\gamma_k = \frac{k}{2C_0\sigma^{\lambda}(x_k)} h(t_k)^{\frac{1}{2}}$.  At $x_k$ we have
$\sigma^{\lambda}\dot{\varphi} \ge \frac{k}{2} h(t_k)^{\frac{1}{2}}$ on $[t_k, t_k + \gamma_k ].$
Then, using \eqref{vpv}, we have at $x_k$,
\[\begin{split}
2 h(t_k) \ge (h(t_k+\gamma_k)+h(t_k)) & \ge \sigma^{\lambda}\int_{t_k}^{t_k+\gamma_k} \dot{\varphi} dt \\
&\ge \gamma_k \frac{k}{2} h(t_k)^{\frac{1}{2}} =  \frac{k^2}{4 C_0\sigma^{\lambda}} h(t_k),
\end{split}\]
which gives a contradiction when $k \rightarrow \infty$ and we are done.
The lower bound is similar.
\end{proof}

The next lemma follows from a straightforward computation.

\begin{lemma}\label{calcola}
Along the K\"ahler-Ricci flow, we have on $X \backslash S$,
\begin{equation} \label{pdpv}
\left(\frac{\de}{\de t}-\Delta\right)(\vp+\dot{\vp}-v)=\tr{\omega}{\omega_B}-m.
\end{equation}
\end{lemma}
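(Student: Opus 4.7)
The plan is a direct computation, closely paralleling the proof of Lemma \ref{lemmaknown2} in the submersion case. Since $v$ does not depend on $t$, and $\omega_B=\chi+\ddbar v$ on $X\backslash S$, the extra $-v$ term contributes only $\Delta v = \tr{\omega}{\omega_B}-\tr{\omega}{\chi}$, so the identity reduces to the three-step calculation below. Everything takes place on $X\backslash S$, where $v$, $\omega_B$, $\of$ and $\Omega$ are smooth; $\vp$ and $\dot\vp$ are smooth on all of $X$ by parabolic regularity.

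First, from $\omega=\hat\omega+\ddbar\vp$ we have $\Delta\vp=\tr{\omega}{\ddbar\vp}=(n+m)-\tr{\omega}{\hat\omega}$, whence
$$\left(\ddt-\Delta\right)\vp=\dot\vp-(n+m)+\tr{\omega}{\hat\omega}.$$
Second, differentiating the parabolic complex Monge-Amp\`ere equation \eqref{pcma2} in $t$ gives
$$\ddot\vp=n+\tr{\omega}{\left(\ddt\omega\right)}-\dot\vp=n+\tr{\omega}{\left(\ddt\hat\omega+\ddbar\dot\vp\right)}-\dot\vp.$$
The key point is that $\hat\omega=\chi+e^{-t}(\omega_0-\chi)$, so $\ddt\hat\omega=\chi-\hat\omega$, and hence
$$\left(\ddt-\Delta\right)\dot\vp=n+\tr{\omega}{\chi}-\tr{\omega}{\hat\omega}-\dot\vp.$$
Third, since $v$ is $t$-independent and $\omega_B=\chi+\ddbar v$ on $X\backslash S$,
$$\left(\ddt-\Delta\right)(-v)=\Delta v=\tr{\omega}{\omega_B}-\tr{\omega}{\chi}.$$

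Adding the three identities, the terms $\dot\vp$, $(n+m)-n$, $\tr{\omega}{\hat\omega}$ and $\tr{\omega}{\chi}$ all cancel pairwise, leaving the desired $\tr{\omega}{\omega_B}-m$. There is no real obstacle here; the only mild point to watch is that both $v$ and $\omega_B$ are singular along $S$, but the conclusion is asserted only on $X\backslash S$, which is precisely where each of the three pieces above is a pointwise smooth identity.
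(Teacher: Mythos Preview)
Your proof is correct and follows essentially the same approach as the paper: both compute $\left(\ddt-\Delta\right)\vp$, $\left(\ddt-\Delta\right)\dot\vp$, and $\Delta v=\tr{\omega}{(\omega_B-\chi)}$ separately and then add. The only cosmetic difference is that you spell out the derivation of the $\dot\vp$ evolution and the final cancellation, whereas the paper simply records the three identities and leaves the summation implicit.
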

\begin{proof}
This follows from the evolution equations
$$\left(\frac{\de}{\de t}-\Delta\right)\vp=\dot{\vp}-(n+m)+\tr{\omega}{\hat{\omega}},$$
$$\left(\frac{\de}{\de t}-\Delta\right)\dot{\vp}=\tr{\omega}{(\chi-\hat{\omega})}+n-\dot{\vp},$$
and the equation $\Delta v= \tr{\omega}{(\omega_B - \chi)}$.
\end{proof}

Next, from  \cite{ST, ST2, ST3}, we have:

\begin{lemma} \label{lemmaps2}
There exist positive constants $C, C'$ and $\lambda$ such that
\begin{equation} \label{ps1}
\chi \le C \omega, \quad \omega_B \le C\sigma^{-\lambda} \chi, \quad \omega_B \le C \sigma^{-\lambda} \omega,
\end{equation}
wherever these quantities are defined, and
\begin{equation} \label{sigmainequalities}
|\nabla \sigma|^2\leq C, \quad |\Delta \sigma|\leq C.
\end{equation}
On $X \backslash S$ we have
\begin{equation} \label{ps2}
\left(\frac{\de}{\de t}-\Delta\right)\tr{\omega}{\omega_B}\leq \tr{\omega}{\omega_B}+C\sigma^{-\lambda}(\tr{\omega}{\omega_B})^2 \le C' \sigma^{-3\lambda}.
\end{equation}
\end{lemma}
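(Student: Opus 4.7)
The plan is to prove \eqref{ps1}, \eqref{sigmainequalities}, and \eqref{ps2} in sequence, assembling standard ingredients from the Song-Tian analytic minimal model program. For the first inequality in \eqref{ps1}, I would apply the parabolic Schwarz Lemma of Song-Tian \cite{ST,Ya2} to the smooth semipositive $(1,1)$-form $\chi$ (which represents $-c_1(X)$) along the flow, exactly as in the proof of Lemma \ref{lemmaknown1}.(i); this yields $\chi\le C\omega$. For $\omega_B\le C\sigma^{-\lambda}\chi$ the plan is a Yau-type $C^2$-estimate applied to the degenerate Monge-Amp\`ere equation \eqref{eqnv}, using Ko\l odziej's $L^\infty$ bound \cite{Kol} on $v$ together with a polynomial blowup $F\le C\sigma^{-\lambda_0}$ near $S'$ (obtained from a direct comparison of $\omega_{\mathrm{SRF}}^n\wedge\chi^m$ with $\Omega_1$, as in \cite[Section 3]{ST2}); after enlarging $\lambda$ if necessary, the Aubin-Yau maximum principle produces $\tr{\chi}{\omega_B}\le C\sigma^{-\lambda}$. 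The third inequality in \eqref{ps1} is then the composition of the first two.

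For \eqref{sigmainequalities}, I would simply trace \eqref{stupidbounds} against $\omega$ to obtain $|\nabla\sigma|^2\le C\tr{\omega}{\chi}$ and $|\Delta\sigma|\le C\tr{\omega}{\chi}$, and then invoke $\chi\le C\omega$ from the previous step to bound $\tr{\omega}{\chi}\le C$. This step is purely algebraic once the first inequality of \eqref{ps1} is in hand.

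For \eqref{ps2}, the first inequality will come from the standard parabolic Schwarz Lemma computation of $(\de_t-\Delta)\tr{\omega}{\omega_B}$ carried out on $X\backslash S$. Denoting by $\mathcal{B}$ a pointwise upper bound for the bisectional curvature of $\omega_B$, the computation yields
\[
\left(\frac{\de}{\de t}-\Delta\right)\tr{\omega}{\omega_B}\le \tr{\omega}{\omega_B}+C\mathcal{B}\,(\tr{\omega}{\omega_B})^2.
\]
The main technical point, and essentially the only nonroutine step, is controlling $\mathcal{B}\le C\sigma^{-\lambda}$ after possibly enlarging $\lambda$; this in turn follows from differentiating \eqref{eqnv} and tracking the powers of $\sigma$ coming from $F$, $v$, and the entries of $\omega_B^{-1}$, exactly as in \cite{ST,ST2,ST3}. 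The second inequality in \eqref{ps2} is then immediate from \eqref{ps1}, which gives $\tr{\omega}{\omega_B}\le Cm\sigma^{-\lambda}$, so that
\[
\tr{\omega}{\omega_B}+C\sigma^{-\lambda}(\tr{\omega}{\omega_B})^2\le C\sigma^{-\lambda}+C\sigma^{-3\lambda}\le C'\sigma^{-3\lambda}.
\]
The hardest step is the quantitative control of the singularities of $v$ and $\omega_B$ near $S'$ in terms of powers of $\sigma$; since $\lambda$ may be freely enlarged at each stage, however, the proof reduces to assembling the previously established pluripotential and Schwarz Lemma estimates from \cite{ST,ST2,ST3}.
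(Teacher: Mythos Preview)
Your proposal follows the same overall strategy as the paper, and the arguments for the first inequality of \eqref{ps1}, for \eqref{sigmainequalities}, and for \eqref{ps2} are essentially identical to what the paper does. The one place where your outline is too optimistic is the second inequality $\omega_B\le C\sigma^{-\lambda}\chi$. You propose to run the Aubin--Yau $C^2$ maximum principle directly on \eqref{eqnv}, using $|v|\le C$ and $F\le C\sigma^{-\lambda_0}$. The difficulty is that $B$ is only a normal projective variety, so there is no smooth compact manifold on which to apply the maximum principle, and moreover $\chi$ is merely semipositive (it degenerates along $S'$), so a bound on $\tr{\chi}{\omega_B}$ cannot be obtained from the standard Aubin--Yau argument with $\chi$ as reference metric.

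The paper handles this by passing to a log resolution $\mu:Y\to B$ with $\mu^{-1}(S')=E$ a simple normal crossings divisor, solving the pulled-back equation there via \cite{EGZ}, and then invoking \cite[Theorem~3.3]{ST2} to obtain $\hat{\omega}_B\le C|s_E|_{h_E}^{-\alpha}\omega_Y$ for a genuine K\"ahler metric $\omega_Y$ on $Y$ in the class $[\mu^*\chi]-\varepsilon[E]$. One then compares $\omega_Y$ back to $\mu^*\chi$ (picking up another negative power of $|s_E|_{h_E}$) and translates powers of $|s_E|_{h_E}$ into powers of $\sigma$ via the explicit construction \eqref{constrsigma}. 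This resolution step is not optional: it is precisely what makes the $C^2$ estimate go through when the base is singular and the reference form degenerate. Once you insert this step, your outline matches the paper's proof.
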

\begin{proof}
The first inequality of (\ref{ps1}) is given in  \cite[Proposition 2.2]{ST3}.  The second is a consequence of the arguments in \cite[Theorem 3.3]{ST2}, as follows: Song-Tian construct the metric $\omega_B$ by working on a resolution $\mu:Y\to B$ so that $\mu^{-1}(S')$ is a simple normal crossing divisor $E$. The class $[\mu^*\chi]$ is semipositive and big, and since $\mu^*\chi=\frac{1}{\ell}\ti{\mu}^*\omega_{\rm FS}$, where $\ti{\mu}$ is the composition $Y\to B\to \mathbb{P}H^0(X,K_X^\ell)$, a standard argument shows that $[\mu^*\chi]-\ve [E]$ contains a K\"ahler metric $\omega_Y$, for some $\ve>0$. Then the results of \cite{EGZ} give a bounded solution $\hat{v}$ of the Monge-Amp\`ere equation
\begin{equation} \label{eqnv2}
(\mu^*\chi+\ddbar \hat{v})^m=(\mu^*F)e^{\hat{v}}(\mu^*\chi)^m,\quad \mu^*\chi+\ddbar \hat{v}>0,
\end{equation}
which is smooth away from $E$. If we let $\hat{\omega}_B=\mu^*\chi+\ddbar \hat{v}$, then $\hat{\omega}_B$ descends to the metric $\omega_B$ on $B\backslash S'$. Then \cite[Theorem 3.3]{ST2} gives
$$\hat{\omega}_B\leq C |s_E|^{-\alpha}_{h_E}\omega_Y,$$
for some $\alpha>0$, where $s_E$ is a defining section of the line bundle associated to $E$ and $h_E$ is a smooth metric on it. From the construction of $\sigma$ in \eqref{constrsigma}, it follows that $\mu^*\sigma$ is bounded above by $|s_E|_{h_E}^\beta$ for some $\beta>0$. Since $\mu:Y\backslash E\to B\backslash S'$ is an isomorphism, the metric $\omega_Y$ induces a metric $\omega_Y'$ on $B\backslash S'$ and we have
\begin{equation}\label{bs1}
\omega_B\leq C \sigma^{-\gamma}\omega_Y',
\end{equation}
for some $\gamma>0$. But on $Y$ we also have
$$\omega_Y\leq  C|s_E|_{h_E}^{-\delta} \mu^*\chi,$$
for some $\delta>0$, since the subvariety of $Y$ where $\ti{\mu}$ fails to be an immersion is contained in $E$. On $B\backslash S'$ this implies that
\begin{equation}\label{bs2}
\omega_Y' \leq C\sigma^{-\gamma'} \chi,
\end{equation}
for some $\gamma'>0$. Combining \eqref{bs1} and \eqref{bs2} we obtain the second inequality of \eqref{ps1}.
The third is obtained by combining the first and second inequalities.  The inequalities (\ref{sigmainequalities}) follow from (\ref{stupidbounds}) and the inequality $\chi \le C \omega$.

Local higher order estimates for the equation (\ref{eqnv}) imply that the bisectional curvature of $\omega_B$ is bounded by $C \sigma^{-\lambda}$ on $B\backslash S'$, up to increasing $\lambda$.  The Schwarz Lemma calculation \cite[Section 4]{ST} (see also the exposition in \cite[Theorem 3.2.6]{SW}) then gives the first inequality  (\ref{ps2}), and the second follows from (\ref{ps1}).
\end{proof}

\subsection{Collapsing estimates for the K\"ahler-Ricci flow away from the singular set}

We now give the proof of Theorem \ref{mainthm2}.  First, we have an analogue of Lemma \ref{1con}.  It is more complicated in this case because of the singular set, and the fact that the quantity $\sigma^{\lambda}(\varphi+\dot{\varphi} - v)$, which now takes the role of $\varphi + \dot{\varphi}$, does not decay exponentially.

\begin{lemma}\label{con22}
Given a compact set $K\subset X\backslash S$, there is a positive decreasing function $F(t)$ which goes to zero as $t\to\infty$ such that
$$\sup_K(\tr{\omega}{\omega_B}-m)\leq F(t).$$
\end{lemma}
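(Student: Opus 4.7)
The argument should extend the maximum principle strategy of Lemma \ref{1con} to the singular setting. The first step is to introduce $u=\varphi+\dot{\varphi}-v$, which by Lemma \ref{calcola} satisfies $(\partial_t-\Delta)u=\tr{\omega}{\omega_B}-m$, and by Lemma \ref{lemmape2}.(iv) satisfies $|\sigma^{\lambda}u|\le Ch(t)^{1/2}$ with $h(t)\to 0$. This is the best decay available for $u$, and it is valid only away from $S$; in particular it replaces the exponential bound on $\varphi+\dot{\varphi}$ used in the submersion case, which accounts for the weaker, unspecified rate of $F(t)$ in the statement. All bounds on $\tr{\omega}{\omega_B}-m$ must therefore be damped by appropriate powers of $\sigma$.

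Next, I would consider the barrier
$$
Q=\sigma^{N\lambda}\bigl[(\tr{\omega}{\omega_B}-m)-Au\bigr],
$$
for a large integer $N$ and a constant $A>0$ to be determined. Since $\tr{\omega}{\omega_B}\le C\sigma^{-\lambda}$ by \eqref{ps1} and $|u|\le C\sigma^{-\lambda}h(t)^{1/2}$, the bracket blows up at worst like $\sigma^{-\lambda}$ near $S$, so choosing $N\ge 2$ forces $Q\to 0$ uniformly along $S$; hence any positive supremum is attained at some $(x_0,t_0)$ with $\sigma(x_0)>0$. At this point $\nabla Q=0$ gives the identity $\sigma^{N\lambda}\nabla\tr{\omega}{\omega_B}=A\nabla u-(\tr{\omega}{\omega_B}-m)\nabla\sigma^{N\lambda}$, which eliminates the cross term $\langle\nabla\sigma^{N\lambda},\nabla\tr{\omega}{\omega_B}\rangle_{\omega}$ in the expansion of $(\partial_t-\Delta)Q$. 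The Schwarz-lemma-type estimate \eqref{ps2} contributes a term of order $\sigma^{N\lambda-3\lambda}$, which is absorbed once $N\lambda\ge 3\lambda+2$, while $|\Delta\sigma^{N\lambda}|\le C\sigma^{N\lambda-2}$ and $|\nabla\sigma|^2\le C$ from \eqref{stupidbounds}--\eqref{sigmainequalities} handle the remaining derivative errors. The key structural contribution is $-A\sigma^{N\lambda}(\tr{\omega}{\omega_B}-m)$ coming from $-A(\partial_t-\Delta)u$: for $A$ sufficiently large in terms of $N$, $\lambda$, and the constants from \eqref{sigmainequalities}, this strongly negative term dominates at the maximum, and one concludes
$$
\sigma^{N\lambda}(\tr{\omega}{\omega_B}-m)(x_0,t_0)\le G(t_0),
$$
where $G(t)\to 0$ is driven by the decay of $h(t)^{1/2}$ entering through $|u|$ in the unabsorbed terms.

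On the compact set $K\subset X\setminus S$ one has $\sigma\ge c_K>0$, and combining the resulting bound on $\sup_X Q$ with Lemma \ref{lemmape2}.(iv) gives
$$
\sup_K(\tr{\omega}{\omega_B}-m)\le c_K^{-N\lambda}\bigl(G(t)+ACc_K^{-\lambda}h(t)^{1/2}\bigr)=:F(t),
$$
which tends to zero as $t\to\infty$; replacing $F$ by its monotone decreasing envelope yields the statement. The main technical obstacle is ensuring that the bound at $(x_0,t_0)$ is \emph{decaying} rather than merely finite: the Schwarz-lemma singularity $\sigma^{-3\lambda}$ and the $\sigma^{-2}$ losses from differentiating $\sigma^{N\lambda}$ must be absorbed by the weight and by the $-A(\tr{\omega}{\omega_B}-m)$ term, leaving only contributions that vanish with $h(t)$. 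This forces the particular combined form of $Q$ and a delicate simultaneous choice of $N$ and $A$; the polynomial character of the decay (as opposed to the exponential decay in Lemma \ref{1con}) is an unavoidable consequence of the polynomial control on $u$.
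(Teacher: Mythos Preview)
Your barrier $Q=\sigma^{N\lambda}\bigl[(\tr{\omega}{\omega_B}-m)-Au\bigr]$ with a \emph{fixed} constant $A$ cannot produce a bound that decays in $t$. At an interior maximum of $Q$ the differential inequality you obtain has the form
\[
0\le(\partial_t-\Delta)Q\le C'\sigma^{N\lambda-3\lambda}-A\sigma^{N\lambda}(\tr{\omega}{\omega_B}-m)+(\textrm{bounded errors}),
\]
and the Schwarz--lemma contribution $C'\sigma^{N\lambda-3\lambda}$ from \eqref{ps2} is \emph{constant in $t$}: choosing $N$ large makes it bounded (by $C'$, since $\sigma\le 1$), but it does not tend to zero. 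Rearranging therefore gives only $\sigma^{N\lambda}(\tr{\omega}{\omega_B}-m)\le C/A$ at the maximum; the terms coming from $|u|\le C\sigma^{-\lambda}h(t)^{1/2}$ are lower order, not the dominant ones. Your claimed $G(t_0)\to 0$ is thus incorrect---it contains the fixed constant $C/A$---and on $K$ you recover at best $\tr{\omega}{\omega_B}-m\le C/(Ac_K^{N\lambda})$, which is the already-known boundedness, not the desired decay.

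The paper's proof fixes this by making the coefficients time-dependent. It takes
\[
Q=\ell(t)\,\sigma^{3\lambda}(\tr{\omega}{\omega_B}-m)-\frac{1}{h(t)}\,\sigma^{3\lambda}(\vp+\dot{\vp}-v),
\]
with $\ell(t)\to\infty$ chosen slowly enough that $\ell(t)\le h(t)^{-1/2}$ and $\ell(t)\le -1/h'(t)$. The coefficient $1/h(t)$ in front of $u$ now grows, so the good term from \eqref{pdpv} is $-\frac{1}{h(t)}\sigma^{3\lambda}(\tr{\omega}{\omega_B}-m)$, which dominates the Schwarz error (of size $C\ell(t)$, still not decaying). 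At the maximum this yields $\sigma^{3\lambda}(\tr{\omega}{\omega_B}-m)\lesssim h(t)\ell(t)-h'(t)$, hence $Q\le C$, and on $K$ one concludes $\tr{\omega}{\omega_B}-m\le C/\ell(t)\to 0$. This is exactly the mechanism of Lemma~\ref{1con}, where $\ell(t)$ and $1/h(t)$ were $e^{\eta t}$ and $e^{2\eta t}$: two growth rates are needed so that the Schwarz constant, multiplied by the slower one, is beaten by the faster one. Your fixed-$A$ ansatz collapses these two scales to a single constant and loses precisely this leverage.
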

\begin{proof}
Let $h(t)$ be a positive decreasing function which goes to zero as $t\to\infty$ such that
\begin{equation} \label{hcon}
\sup_X|\sigma^\lambda(\vp+\dot{\vp}-v)|\leq h(t),
\end{equation}
which exists thanks to \eqref{dot}. We may assume without loss of generality that  $h'(t)\to 0$ as $t\to\infty$.
Indeed, by assumption there exists a sequence $t_i\to\infty$ with $h(t)\leq \frac{1}{i}$ for all $t\geq t_i$, and $t_{i+1}-t_{i}\geq 1$.
Define a piecewise constant function $\ti{h}(t)$, $t\geq 0$, to be equal to $\frac{1}{i}$ on the interval $t_i\leq t< t_{i+1}$, so clearly $h(t)\leq \ti{h}(t)$ for all $t\geq t_1$. We can then
smooth out $\ti{h}$ in the obvious way (making it continuous and essentially linear on each interval $t_i\leq t< t_{i+1}$) to obtain a smooth nonnegative and decreasing function $\hat{h}(t)$ with $\ti{h}(t)\leq \hat{h}(t)$ for all $t\geq 0$ and so that $\hat{h}(t)$ still goes to zero and the derivative of $\hat{h}(t)$ on the interval $t_i\leq t< t_{i+1}$ is of the order of $(i(t_{i+1}-t_i))^{-1}$ which goes to zero. We then replace $h(t)$ by $\hat{h}(t)$ (still calling it $h(t)$).

Next, we pick a smooth positive function $\ell(t), t\geq 0$, with $\lim_{t\to\infty}\ell(t)=\infty$, such that $|\ell'(t)|\leq C$ for all $t\geq 0$ and
$$\ell(t)\leq\frac{1}{h(t)^{\frac{1}{2}}},\quad \ell(t)\leq -\frac{1}{h'(t)}.$$
Define
$$Q=\ell(t)\sigma^{3\lambda}(\tr{\omega}{\omega_B}-m)-\frac{1}{h(t)}\sigma^{3\lambda}(\vp+\dot{\vp}-v).$$
Then using Lemmas \ref{calcola}, \ref{lemmaps2} and (\ref{hcon}), we have on $X \backslash S$,
\[\begin{split}
\lefteqn{\left(\frac{\de}{\de t}-\Delta\right)Q} \\&\leq \left(\ell'(t)-\frac{1}{h(t)}\right)\sigma^{3\lambda}(\tr{\omega}{\omega_B}-m)+C(\ell(t)+1)+\frac{h'(t)}{h(t)^2}\sigma^{3\lambda}(\vp+\dot{\vp}-v)\\
&-2\mathrm{Re}\left\langle \nabla(\sigma^{3\lambda}),\nabla\left(\ell(t) (\tr{\omega}{\omega_B}-m)-\frac{1}{h(t)}(\vp+\dot{\vp}-v)\right)\right\rangle\\
&\le \left(\ell'(t)-\frac{1}{h(t)}\right)\sigma^{3\lambda}(\tr{\omega}{\omega_B}-m)+C(\ell(t)+1)- \frac{h'(t)}{h(t)}\\
&-2\sigma^{-3\lambda}\mathrm{Re}\left\langle \nabla(\sigma^{3\lambda}),\nabla Q\right\rangle+C Q\sigma^{-2},
\end{split}\]
where we have used the fact that $|\Delta ( \sigma^{3\lambda} ) | \le C \sigma^{\lambda}$.
Observe that from (\ref{ps1}) and (\ref{hcon}) we have $\sigma^{-2}Q\leq C\ell(t)$, since we may assume that $\lambda\geq 1$. Since $|\ell'(t)|\le C$,   we may assume that $t$ is sufficiently large so that
$$\ell'(t)\leq \frac{1}{2h(t)}.$$
We wish to obtain an upper bound for $Q$ using the maximum principle.  Hence we may assume without loss of generality that we are working at a point with $\tr{\omega}{\omega_B}-m>0$.
Therefore we get
\[
\begin{split}
\left(\frac{\de}{\de t}-\Delta\right)Q
 \leq {} &  -\frac{1}{2h(t)}\sigma^{3\lambda}(\tr{\omega}{\omega_B}-m)+C(\ell(t)+1)-\frac{h'(t)}{h(t)}\\ & -2\sigma^{-3\lambda}\mathrm{Re}\left\langle \nabla(\sigma^{3\lambda}),\nabla Q\right\rangle,
\end{split}
\]
and so at a maximum point of $Q$ (which is necessarily in $X\backslash S$) we have
$$Q\leq C\ell(t)(\ell(t)+1)h(t)-2\ell(t)h'(t)+C\leq C,$$
thanks to our choice of $\ell(t)$. This proves what we want, choosing $F(t)=\frac{C}{\ell(t)},$ for $C$ depending on the compact set $K$.
\end{proof}

The rest of the proof of Theorem \ref{mainthm2} is almost identical to the proof of Theorem \ref{mainthm1}.

\begin{proof}[Proof of Theorem \ref{mainthm2}]
To avoid repetition, we provide here just an outline of the proof, emphasizing  the changes from the proof of Theorem \ref{mainthm1}.

For the rest of the proof,  fix
a compact set $K'\subset B\backslash S'$ and write $K=\pi^{-1}(K') \subset X \backslash S$.

First, there exists a constant $C$ such that
$$\|e^t\omega|_{X_y}\|_{C^1(X_y,\omega_0|_{X_y})}\leq C , \quad e^t\omega|_{X_y}\geq C^{-1}\omega_0|_{X_y},$$
for all $t\geq 0$ and for all $y\in K'$.   Indeed, this follows from the same argument as in Lemma \ref{fiberC1}, since $\pi$ is a submersion near every point in $\pi^{-1}(K')$.

In particular,  for any fixed $y\in B\backslash S'$ and $0<\alpha<1$, given any sequence $t_k\to\infty$ we can extract a subsequence such that
\begin{equation} \label{Calphafirst}
e^{t_k}\omega|_{X_y}\to\omega_{\infty,y}, \quad \textrm{in $C^\alpha$ on $X_y$},
\end{equation}
where $\omega_{\infty,y}$ is a $C^\alpha$ K\"ahler metric on $X_y$ cohomologous to $\omega_0|_{X_y}$.

Next we claim that we have fiberwise $C^{\alpha}$ convergence of $e^t \omega|_{X_y}$ to $\omega_{\textrm{SRF},y}$ for $y \in K'$.  More precisely,   given $\alpha \in (0,1)$ and  $y\in K'$,  we have that
\begin{equation} \label{fiberwiseCalpha}
e^{t}\omega|_{X_y}\to\omega_{{\rm SRF},y}, \quad \textrm{in $C^\alpha$ as $t\to \infty$  on $X_y$.}
\end{equation}
Indeed, on $X\backslash S$ write
$$f = e^{\vp+\dot{\vp}-v}\binom{n+m}{n}\frac{\omega^n\wedge\omega_B^{m}}{\omega^{n+m}},$$
so that restricted to $X_y$ ($y \in K'$) we have $(e^t \omega|_{X_y})^n = f ( \omega_{\textrm{SRF},y})^n$.
Note that
$$\int_{X_y} (f-1) ( \omega_{\textrm{SRF},y})^n = 0,$$
and, thanks to \eqref{dot},
$$\sup_{K} | e^{\varphi + \dot{\varphi} - v} -1 | \rightarrow 0, \quad \textrm{as } t\rightarrow \infty.$$
Moreover, as in (\ref{1amgm}) and using Lemma \ref{con22}, on $K$ we have
$$\binom{n+m}{n}\frac{\omega^n\wedge\omega_B^{m}}{\omega^{n+m}}\le (\tr{\omega}{\omega_B}/m)^{m} \leq 1+ H(t),$$
for $H(t) \rightarrow 0$ as $t\rightarrow \infty$.  Hence $f-1 \le h(t),$ with $h(t)\rightarrow 0$ as $t \rightarrow \infty$.  Finally, $f$ satisfies
$| \nabla (f|_{X_y})|_{\omega_0|_{X_y}} \le A$ for all $y \in K'$.  Applying Lemma \ref{1conve} to $f-1$ on $K$ (see Remark \ref{remarkcpt}) we have that $f$ converges to $1$ uniformly on $K$.  Namely,
\begin{equation}\label{volcon2}
\|(e^{t}\omega|_{X_y})^n-\omega_{{\rm SRF},y}^n\|_{C^0(X_y,\omega_0|_{X_y})}\to 0, \quad \textrm{as } t\rightarrow \infty,
\end{equation}
 \emph{uniformly} as $y$ varies in $K'$.  In particular, $e^{t}\omega|_{X_y}\to\omega_{{\rm SRF},y}$ in $C^\alpha$ by (\ref{Calphafirst}) and
the same argument as in Lemma \ref{1conv}.

Next we show that
\begin{equation} \label{fwca2}
\| e^t \omega|_{X_y} - \omega_{\textrm{SRF},y} \|_{C^0(X_y, \omega_0|_{X_y})} \rightarrow 0, \quad \textrm{as } t \rightarrow \infty,
\end{equation}
uniformly for $y \in K'$.  To see this, define
$$f = e^{\vp+\dot{\vp}-v}\binom{n+m}{n}\frac{\omega\wedge (e^{-t}\omega_{{\rm SRF}})^{n-1}\wedge\omega_B^m}{\omega^{n+m}},$$
which satisfies
$$f|_{X_y}=\frac{(e^t\omega|_{X_y})\wedge \omega_{{\rm SRF},y}^{n-1}}{\omega_{{\rm SRF},y}^n},$$
when restricted to $X_y$, for $y \in K'$.  By the same argument as in the proof of (\ref{mtii}) in Section 2, we see that $f$ converges 1 as $t \rightarrow 0$ and hence
\begin{equation} \label{g22}
\| (e^t \omega|_{X_y}) \wedge \omega_{\textrm{SRF},y}^{n-1} - \omega_{\textrm{SRF},y}^n \|_{C^0(X_y, \omega_0|_{X_y})} \rightarrow 0, \quad \textrm{as } t \rightarrow \infty,
\end{equation}
uniformly as $y$ varies in $K'$.  From (\ref{volcon2}) and (\ref{g22}), we apply  Lemma \ref{lemmamatrix} to obtain (\ref{fwca2}) as required.

It remains to prove part (i) of Theorem \ref{mainthm2}.   Define
$$\tilde{\omega} = e^{-t} \of + (1-e^{-t}) \omega_B,$$
as in Section \ref{sectionkrfsubmersion}.  On $K$ we have
$$\tr{\omega}{\ti{\omega}}=\tr{\omega}{\omega_B}+\tr{\omega}{(e^{-t}\omega_{{\rm SRF},y})}+e^{-t}(-\tr{\omega}{\omega_B}+\tr{\omega}{(\of-\omega_{{\rm SRF},y})}).$$
On $K$, $\tr{\omega}{\omega_B}$ is uniformly bounded.  It then follows by the same argument as in the proof of (\ref{trdiff}) that
\begin{equation}\label{two}
\sup_K|e^{-t}(-\tr{\omega}{\omega_B}+\tr{\omega}{(\of-\omega_{{\rm SRF},y})})|\leq Ce^{-t/2}.
\end{equation}
Moreover, (\ref{fwca2}) implies that $\tr{\omega}{(e^{-t} \omega_{\textrm{SRF},y})} \le n + h(t)$ for $h(t) \rightarrow 0$, on $K$.
Then from Lemma \ref{con22} we have on $K$, after possibly changing $h(t)$,
\begin{equation}\label{one}
\tr{\omega}{\ti{\omega}}-(n+m)\leq h(t) \rightarrow 0.
\end{equation}
On the other hand, again on $K$,
$$\frac{\ti{\omega}^{n+m}}{\omega^{n+m}}=\binom{n+m}{n}\frac{e^{-nt}\of^n\wedge\omega_B^m}{\omega^{n+m}}+O(e^{-t})=e^{-\vp-\dot{\vp}+v}+O(e^{-t})\to 1.$$
Applying Lemma \ref{lemmamatrix}, we obtain
$$\| \omega - \tilde{\omega} \|_{C^0(K, \omega)} \rightarrow 0, \quad \textrm{as } t \rightarrow \infty.$$  But since $\omega \le C \omega_0$ on $K$, and $\tilde{\omega} = \omega_B + e^{-t}(\of - \omega_B)$ we obtain
$$\| \omega - \omega_B \|_{C^0(K, \omega_0)} \rightarrow 0,  \quad \textrm{as } t \rightarrow \infty,$$
as required.
\end{proof}

\section{Collapsing of Ricci-flat K\"ahler metrics}\label{sectioncy}
In this section we prove Theorem \ref{mainthm3}.

\subsection{Monge-Amp\`ere equations and preliminary estimates}

Let $(X, \omega_X)$ be a compact $(n+m)$ manifold with a Ricci-flat K\"ahler metric $\omega_X$, as in Section \ref{introcy} of the Introduction.  Recall that we have a holomorphic map $\pi: (X, \omega_X)  \rightarrow (Z, \omega_Z)$ between compact K\"ahler manifolds with (possibly singular) image $B$, a normal variety in $Z$.  We denote by $S' \subset B$ those points of $B$ which are either singular in $B$ or critical for $\pi$, and we write $S=\pi^{-1}(S')$ .  The fibers $X_y$ for $y \in B \backslash S'$ are Calabi-Yau $n$-folds.

Write $\chi=\pi^*\omega_Z$, which is a smooth nonnegative $(1,1)$ form on $X$, and we will also write $\chi$ for the restriction of $\omega_Z$ to $B\backslash S'$.  Note that
$\int_{B\backslash S'}\chi^m$ is finite.

We define a semi Ricci-flat form $\of$ on $X\backslash S$ in the same way as in Section \ref{subsectionsub}.  Indeed, for each $y \in B \backslash S'$ there is a smooth function $\rho_y$ on $X_y$ so that $\omega_X|_{X_y} + \ddbar \rho_y = \omega_{\textrm{SRF,y}}$ is Ricci-flat, normalized by $\int_{X_y} \rho_y (\omega_X|_{X_y})^n=0$.  As $y$ varies, this defines a smooth function $\rho$ on $X\backslash S$ and we define
$\of = \omega_X + \ddbar \rho.$

Let $F$ be the function on $X\backslash S$ given by
$$F=\frac{\omega_X^{n+m}}{\binom{n+m}{n}\of^n\wedge\chi^m}.$$
As in the proof of Theorem \ref{genke}, one sees easily that $F$ is constant along the fibers $X_y$, $y\in B\backslash S'$, so it descends to a smooth function $F$ on $B\backslash S'$.  We see that $F$ satisfies $\int_{B\backslash S'}F\chi^m=\int_X\omega_X^{n+m}/\binom{n+m}{n}\int_{X_y}\omega_X^n$ (see \cite[Section 3]{ST2} and \cite[Section 4]{To}). Here note that $\int_{X_y}\omega_X^n$ is independent of $y\in B\backslash S'$.

Then \cite[Section 3]{ST2} shows that the Monge-Amp\`ere equation
\begin{equation} \label{eqnvcy}
(\chi+\ddbar v)^m=\frac{\binom{n+m}{n}\int_{X} \omega_X^n\wedge\chi^m}{\int_{X}\omega_X^{n+m}}F\chi^m,
\end{equation}
has a unique solution $v$ (in the sense of Bedford-Taylor \cite{BT}) which is a bounded $\chi$-plurisubharmonic function on $B$, smooth on $B\backslash S'$, with $\int_{X}v\omega_X^{n+m}=0$, where here and henceforth we write $v$ for $\pi^*v$.

We define
$$\omega_B=\chi+\ddbar v,$$
for $v$ solving (\ref{eqnvcy}).  Note that we have
\begin{equation}\label{ct3}
\of^n\wedge\omega_B^m=\frac{\binom{n+m}{n}\int_{X} \omega_X^n\wedge\chi^m}{\int_{X}\omega_X^{n+m}} F\of^n\wedge\chi^m=\frac{\int_{X} \omega_X^n\wedge\chi^m}{\int_{X}\omega_X^{n+m}}\omega_X^{n+m}.
\end{equation}
Moreover, $\omega_B$ is a smooth K\"ahler metric on $B\backslash S'$, and satisfies
$$\Ric(\omega_B)=\omega_{{\rm WP}},$$
where $\omega_{{\rm WP}}$ is a smooth semipositive form on $B\backslash S'$ constructed in the same way as in Section \ref{subsectionsub}.

As in Section \ref{sectionprelim2}, we fix a smooth nonnegative function $\sigma$ on $X$ with zero locus exactly equal to $S$ and with
\begin{equation}\label{stupidbounds2}
\sigma\leq 1, \quad \mn\de\sigma\wedge\db\sigma\leq C\chi,\quad -C\chi\leq \ddbar\sigma\leq C\chi,
\end{equation}
for some constant $C$ (in the case when $S$ is empty we set $\sigma=1$).  It is convenient to define another smooth nonnegative function $\mathcal{F}$ with zero locus equal to $S$ by
\begin{equation} \label{defnF}
\mathcal{F} = e^{-e^{A\sigma^{-\lambda}}},
\end{equation}
for positive constants $A$ and $\lambda$ to be determined.

For $t\geq 0$, let $\tilde{\omega}=\tilde{\omega}(t)$, given by
$$\ti{\omega}=\chi+e^{-t}\omega_X \in \alpha_t = [\chi] + e^{-t} [\omega_X],$$ be a family of reference K\"ahler metrics, and let $\omega=\ti{\omega}+\ddbar\vp$ be the unique Ricci-flat K\"ahler metric on $X$ cohomologous to $\ti{\omega}$, with the normalization $\int_X\vp\omega_X^{n+m}=0$. Then $\omega$ solves the Calabi-Yau equation
\begin{equation}\label{ma}
\omega^{n+m}=c_t e^{-nt}\omega_X^{n+m},
\end{equation}
where $c_t$ is the constant given by
\begin{equation}\label{ct}
c_t=\frac{\int_X e^{nt}\ti{\omega}^{n+m}}{\int_X\omega_X^{n+m}}=\frac{1}{\int_X\omega_X^{n+m}}\sum_{k=0}^m\binom{n+m}{k}e^{-(m-k)t}\int_X \omega_X^{n+m-k}\wedge\chi^k,
\end{equation}
so
\begin{equation} \label{ct2}
c_t=\binom{n+m}{n} \frac{\int_X\omega_X^{n}\wedge\chi^m }{\int_X\omega_X^{n+m}}+O(e^{-t}).
\end{equation}

The following estimates are already known by the work of the first-named author \cite{To}.

\begin{lemma} \label{lemmape3}
 Let $\omega=\omega(t)$ and $\varphi=\varphi(t)$ be as above.  Then
\begin{enumerate}
\item[(i)]  There exist positive constants $C, A, \lambda$ such that for $\mathcal{F}$ given by (\ref{defnF}),
\begin{equation}\label{bounds}
C^{-1} \mathcal{F} \tilde{\omega} \le \omega \le C \mathcal{F}^{-1} \tilde{\omega}, \quad \textrm{for all } t\ge 0.
\end{equation}
\item[(ii)] There exists $C$ such that $\sup_X | \varphi| \le C$ for all $t \ge 0$.
\item[(iii)] $\displaystyle{ \int_X | \varphi - v | \, \omega_X^{n+m}  \rightarrow 0}$ as $ t\rightarrow \infty$.
\item[(iv)] For any $\alpha \in (0,1)$, we have $\varphi \rightarrow v$ in $C^{1, \alpha}_{\emph{loc}}(X \setminus S)$ as $t \rightarrow \infty$.
\end{enumerate}
\end{lemma}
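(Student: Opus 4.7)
The plan is to deduce parts (i)--(iv) from the corresponding estimates in the first-named author's paper \cite{To}, with only cosmetic changes owing to the parametrization ($\alpha_t=[\chi]+e^{-t}[\omega_X]$ here versus $[\chi]+t[\omega_X]$, $t\to 0^+$, in \cite{To}). First I would deal with part (ii). After dividing \eqref{ma} by $\ti{\omega}^{n+m}$ and using the expansion \eqref{ct}, the right hand side is bounded in $L^p$ for some $p>1$ with respect to a fixed background volume form on $X$; combined with the normalization $\int_X\vp\,\omega_X^{n+m}=0$ and the fact that $[\chi]$ is nef and big as the pullback of a K\"ahler class from $Z$, the Ko\l odziej / Eyssidieux--Guedj--Zeriahi pluripotential estimates in the form used in \cite[Section 4]{To} then yield the uniform $L^\infty$ bound on $\vp$.

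For (i), the strategy is a parabolic-type Aubin--Yau / Chern--Lu computation for $\log \tr{\ti{\omega}}{\omega}$, combined with a barrier of the form $-\Lambda \sigma^{-\lambda}\cdot(\text{combination of }\vp \text{ and } \rho)$ designed to dominate the curvature contributions of $\omega_X$ and $\chi$ after they are blown up by $\sigma^{-\lambda}$. Pushing the maximum principle through, and choosing $A$ and $\lambda$ large, gives $\tr{\ti{\omega}}{\omega}\le C\F^{-1}$ (this is essentially the content of \cite[Theorem 2.1]{To}). The reversed inequality $\omega\ge C^{-1}\F\ti{\omega}$ follows from \eqref{ma} together with the elementary identity relating determinant and trace of a positive Hermitian matrix, absorbing the resulting power of $\F$ into the definition of $\F$ by increasing $A$. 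The hard part is keeping track of how $\lambda$ propagates through the argument: the curvature of the auxiliary forms is controlled only modulo some power of $\sigma^{-1}$, and each iteration of the maximum principle forces $\lambda$ and $A$ to be taken still larger; this is precisely why the double-exponential weight $\F=e^{-e^{A\sigma^{-\lambda}}}$ is used rather than a mere power of $\sigma$, since inverting such a weight only changes $A$, not the functional form.

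Parts (iii) and (iv) are then comparatively formal. For (iii), the $L^\infty$ bound (ii) together with weak compactness of $\chi$-plurisubharmonic functions produces, along any sequence $t_i\to\infty$, a subsequential $L^1$ limit $\vp_\infty$; using the expansion of $\ti{\omega}^{n+m}$ implicit in \eqref{ct} together with the identity \eqref{ct3}, one verifies that $\vp_\infty$ is constant along the fibers of $\pi$ and descends to a bounded $\chi$-plurisubharmonic solution of \eqref{eqnvcy} on $B$. Uniqueness of such solutions (Bedford--Taylor, Ko\l odziej) combined with the normalizations $\int_X \vp_i\,\omega_X^{n+m}=0=\int_X v\,\omega_X^{n+m}$ identifies $\vp_\infty=v$, so the convergence is along the whole family. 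For (iv), on any compact $K\subset X\setminus S$ the weight $\F$ is bounded below, so by (i) we obtain $C^{-1}\ti{\omega}\le\omega\le C\ti{\omega}$ on $K$; the Monge--Amp\`ere equation \eqref{ma} is then uniformly non-degenerate on $K$ with smooth right hand side, so Evans--Krylov and Schauder estimates produce uniform $C^{2,\alpha}_{\emph{loc}}(X\setminus S)$ bounds on $\vp$, and interpolation with the $L^1$ convergence of (iii) upgrades this to $C^{1,\alpha}_{\emph{loc}}$ convergence of $\vp$ to $v$.
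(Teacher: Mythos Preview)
Your outline follows the paper's approach, which simply cites \cite{To} for all four parts (with \cite[Lemma 4.1]{GTZ} as an additional reference for (i)). The sketches for (i)--(iii) are accurate reconstructions of the arguments there, modulo one slip of terminology: in (i) the computation is elliptic, not ``parabolic-type'', since \eqref{ma} has no time evolution and the maximum principle is applied at each fixed $t$.

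For (iv), however, your reasoning has a gap. The Monge--Amp\`ere equation \eqref{ma} is \emph{not} uniformly non-degenerate on $K$ as $t\to\infty$: the right-hand side $c_t e^{-nt}\omega_X^{n+m}$ tends to zero, and the solution $\omega$, being uniformly equivalent to $\ti\omega=\chi+e^{-t}\omega_X$ on $K$, collapses in the fiber directions. Evans--Krylov applied na\"ively with respect to the fixed background $\omega_X$ would give estimates that blow up with $t$ (and rescaling the fiber coordinates only shifts the problem). The correct argument, and the one in \cite[Theorem 4.1]{To}, is more elementary: from (i) one has on $K$ the two-sided bound $-\ti\omega\le\ddbar\vp=\omega-\ti\omega\le C\omega_X$, hence $|\Delta_{\omega_X}\vp|\le C$ uniformly in $t$; standard $L^p$ elliptic theory then gives uniform $W^{2,p}$, hence $C^{1,\alpha}$ (not $C^{2,\alpha}$), bounds on $\vp$ on compact subsets of $X\setminus S$. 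Combined with (iii) and Arzel\`a--Ascoli this yields the $C^{1,\alpha}_{\mathrm{loc}}$ convergence. Uniform $C^{2,\alpha}$ control of $\vp$ with respect to $\omega_X$ is in fact not expected here, which is precisely why the statement of (iv) stops at $C^{1,\alpha}$.
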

\begin{proof}
Part (i) is proved in \cite{To} (see also \cite[Lemma 4.1]{GTZ}).  For part (ii), see \cite[Theorem 2.1]{To}.  Parts (iii) and (iv) are proved in \cite[Theorem 4.1]{To}.
\end{proof}

The estimate \eqref{bounds} is the reason why here and in the rest of this section we consider the function $\F$ instead of the simpler $\sigma^{-\lambda}$ which we used in the case of the K\"ahler-Ricci flow. We do not expect this to be optimal.

The next lemma is analogous to Lemma \ref{lemmaps2} above.

\begin{lemma} \label{lemmaps3}
There exist positive constants $C, C'$ and $\lambda$ such that
\begin{equation} \label{ps3}
\chi \le C \omega, \quad \omega_B \le C\sigma^{-\lambda} \chi, \quad \omega_B \le C \sigma^{-\lambda} \omega,
\end{equation}
wherever these quantities are defined, and
\begin{equation} \label{sigmainequalities2}
|\nabla \sigma|^2\leq C, \quad |\Delta \sigma|\leq C.
\end{equation}
On $X \backslash S$ we have
\begin{equation} \label{ps4}
\Delta \tr{\omega}{\omega_B}\geq - \tr{\omega}{\omega_B}-C\sigma^{-\lambda}(\tr{\omega}{\omega_B})^2 \ge -C' \sigma^{-3\lambda}.
\end{equation}
\end{lemma}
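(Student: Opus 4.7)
The plan is to prove each of the four assertions of Lemma~\ref{lemmaps3} in parallel with the corresponding parts of Lemma~\ref{lemmaps2}, the main difference being that the K\"ahler-Ricci flow calculation is replaced by an elliptic one (no time derivative, and $\omega$ is Ricci-flat rather than evolving). For the first inequality $\chi \le C\omega$, I would apply the Chern--Lu/Schwarz lemma to the holomorphic map $\pi:(X,\omega)\to(Z,\omega_Z)$. Since $\omega$ is Ricci-flat and $\omega_Z$ is a smooth K\"ahler metric on a compact manifold (hence has bounded holomorphic bisectional curvature), on the open dense set $\{\chi>0\}$ one obtains $\Delta\log\tr{\omega}{\chi} \ge -C_0\tr{\omega}{\chi}$. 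Consider $Q = \log\tr{\omega}{\chi} - A\vp$ for $A > C_0$; from $\Delta\vp = (n+m) - \tr{\omega}{\chi} - e^{-t}\tr{\omega}{\omega_X}$ and the $L^\infty$ bound on $\vp$ from Lemma~\ref{lemmape3}(ii), the maximum principle at an interior maximum of $Q$ (which lies in $\{\chi>0\}$) gives $\tr{\omega}{\chi}\le A(n+m)/(A-C_0)$ there, and thus $\chi \le C\omega$ everywhere; this is essentially \cite[Theorem~2.1]{To}.

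For the remaining inequalities in (\ref{ps3}) I would repeat verbatim the resolution-and-EGZ argument from Lemma~\ref{lemmaps2}. Work on a resolution $\mu:Y\to B$ with $E=\mu^{-1}(S')$ simple normal crossing, produce $\ve>0$ and a K\"ahler class $[\omega_Y] = [\mu^*\chi] - \ve[E]$, pull back the Monge--Amp\`ere equation (\ref{eqnvcy}) to $Y$ (which, unlike (\ref{eqnv}), has no $e^{\hat{v}}$ factor on the right-hand side, but this does not affect the applicability of the estimates), and apply \cite{EGZ} to deduce $\hat{\omega}_B \le C|s_E|_{h_E}^{-\al}\omega_Y$ off $E$. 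Combining with $\mu^*\sigma \le |s_E|_{h_E}^{\be}$ (from (\ref{constrsigma})) and $\omega_Y \le C|s_E|_{h_E}^{-\delta}\mu^*\chi$ (since the non-immersion locus of $\tilde\mu$ is contained in $E$) yields $\omega_B \le C\sigma^{-\gamma}\chi$ on $B\setminus S'$; the third inequality is immediate by combining this with the first. The bounds (\ref{sigmainequalities2}) then follow from (\ref{stupidbounds2}) together with $\chi \le C\omega$.

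For the Schwarz lemma estimate (\ref{ps4}), local Evans--Krylov/Schauder theory applied to (\ref{eqnvcy}) on compact subsets of $B\setminus S'$, together with the sup-norm bound $\omega_B \le C\sigma^{-\lambda}\chi$ just established, yields a bisectional curvature bound $|\mathrm{Bisec}(\omega_B)| \le C\sigma^{-\lambda}$ on $B\setminus S'$, after possibly enlarging $\lambda$. Since $\omega$ is Ricci-flat, the standard Schwarz lemma computation (\cite[Section 4]{ST}, \cite[Theorem 3.2.6]{SW}) then gives $\Delta\tr{\omega}{\omega_B} \ge -C\sigma^{-\lambda}(\tr{\omega}{\omega_B})^2$, which implies the first inequality of (\ref{ps4}) since $\tr{\omega}{\omega_B}\ge 0$. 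The second inequality follows by substituting the already-proved bound $\tr{\omega}{\omega_B}\le C\sigma^{-\lambda}$.

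The main obstacle here is the quantitative dependence of the bisectional curvature of $\omega_B$ on $\sigma$ near $S'$: one must combine the Ko{\l}odziej/EGZ-type $L^\infty$ estimates with local higher-order theory in a way that tracks the precise rate of curvature blow-up, allowing an increase of $\lambda$ but still giving a polynomial bound. The rest of the argument is a routine adaptation of the parabolic proof of Lemma~\ref{lemmaps2}.
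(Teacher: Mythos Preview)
Your proposal is correct and follows essentially the same route as the paper's proof: the Schwarz Lemma (as in \cite[Lemma~3.1]{To}) for $\chi\le C\omega$, the resolution-and-EGZ argument of \cite[Theorem~3.3]{ST2} reproduced from Lemma~\ref{lemmaps2} for $\omega_B\le C\sigma^{-\lambda}\chi$, and local higher-order estimates on \eqref{eqnvcy} to bound the bisectional curvature of $\omega_B$ before applying the elliptic Schwarz computation. Your observation that in the Ricci-flat case the $-\tr{\omega}{\omega_B}$ term in \eqref{ps4} is not actually needed (the Ricci contribution vanishes) is correct; the paper simply states the weaker inequality that matches the parabolic version.
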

\begin{proof}  The proof is almost the same as that of Lemma \ref{lemmaps2}.
The first inequality of (\ref{ps3}) is given by the Schwarz Lemma \cite[Lemma 3.1]{To}.  The second follows again from the arguments in \cite[Theorem 3.3]{ST2}, as in
Lemma \ref{lemmaps2} (the fact that now $B$ is not necessarily projective does not affect the arguments), and the third follows immediately.
The inequalities (\ref{sigmainequalities2}) then follow from (\ref{stupidbounds2}).

Local higher order estimates for the equation (\ref{eqnvcy}) imply that the bisectional curvature of $\omega_B$ is bounded by $C \sigma^{-\lambda}$ on $B\backslash S'$, up to increasing $\lambda$.  The Schwarz Lemma calculation \cite[Lemma 3.1]{To}  gives the first inequality  (\ref{ps4}), and the second follows from (\ref{ps3}).
\end{proof}

\subsection{Collapsing estimates for Ricci-flat metrics}

In this section, we give the proof of Theorem \ref{mainthm3}.  Recall that the function $\mathcal{F}$ is given by (\ref{defnF}) and depends on the two constants $A$ and $\lambda$.

\begin{lemma}\label{zero}
There are constants $A,\lambda>0$ and a positive function $h(t)\to 0$ as $t\to\infty$, such that
\begin{equation}\label{conv}
\sup_{X}\mathcal{F} |\vp-v|\leq h(t).
\end{equation}
\end{lemma}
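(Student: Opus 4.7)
The plan is to deduce the claim from the already-known $C^{1,\alpha}_{\mathrm{loc}}(X\setminus S)$ convergence provided by Lemma \ref{lemmape3}(iv), by a soft compactness argument. Fix $A,\lambda>0$ as in Lemma \ref{lemmape3}(i), so that $\F=e^{-e^{A\sigma^{-\lambda}}}$ is a continuous function on $X$ with $0\leq \F\leq 1$, vanishing precisely on $S$. Define
$$h(t):=\sup_X \F\,|\vp-v|,$$
which is finite because $\vp$ is uniformly bounded by Lemma \ref{lemmape3}(ii) and $v$ is a bounded $\chi$-plurisubharmonic function on $B$.

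The task then reduces to showing $h(t)\to 0$. Given $\ve>0$, let $C$ be a uniform bound for $|\vp-v|$ and pick $\delta=\ve/(2C)$. The sublevel set $K_\delta:=\{\F\geq \delta\}$ is a compact subset of $X\setminus S$, since $\F$ is continuous and its zero locus equals $S$. On $X\setminus K_\delta$ the estimate $\F|\vp-v|\leq \delta C=\ve/2$ holds trivially. On $K_\delta$, Lemma \ref{lemmape3}(iv) yields $\vp\to v$ locally uniformly on $X\setminus S$, so $\sup_{K_\delta}|\vp-v|\to 0$; for $t$ large enough this forces $\sup_{K_\delta}\F|\vp-v|\leq \ve/2$. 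Combining these two bounds gives $h(t)\leq \ve$ for all large $t$, as required.

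The main (mild) obstacle is merely confirming that the argument needs nothing beyond what Lemma \ref{lemmape3} already provides, and that the particular exponential shape of $\F$ is immaterial for the \emph{qualitative} statement: any continuous weight with zero set exactly $S$ would suffice. The reason to record the precise $\F$ here is for the subsequent estimates, where it must match the weight appearing in Lemma \ref{lemmape3}(i). If a quantitative rate for $h(t)$ were required later (say to feed into a maximum principle in the style of Lemma \ref{con22}), one would instead want to derive $h$ directly from the $L^1$ convergence in Lemma \ref{lemmape3}(iii), via the Green's function of $\omega$, exploiting the identity $\Delta(\vp-v)=(n+m)-\tr{\omega}{\omega_B}-e^{-t}\tr{\omega}{\omega_X}$ together with the bound $\tr{\omega}{\omega_B}\leq C\sigma^{-\lambda}$ from Lemma \ref{lemmaps3} and the diameter and Green's function estimates of \cite{To2, ZT} alluded to in the introduction. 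That quantitative route is the serious work; the qualitative statement of the lemma as stated, however, follows immediately from the ingredients already at hand.
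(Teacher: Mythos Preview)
Your proof is correct and takes a genuinely different, softer route than the paper's. The paper does not invoke part (iv) of Lemma \ref{lemmape3} at all; instead it works harder from parts (i)--(iii): it first establishes a uniform weighted Laplacian bound $\sup_X\F|\Delta_{g_X}\vp|\le C$ from (i), upgrades this via Sobolev and $L^p$ elliptic estimates (an interpolation argument) to a uniform weighted gradient bound $\sup_X\F|\de\vp|_{g_X}\le C$, derives the analogous bound for $v$, and then combines the resulting Lipschitz bound on $\F(\vp-v)$ with the $L^1$ convergence (iii) to get uniform convergence.

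Your compactness argument is cleaner precisely because it uses the stronger input (iv), which already encodes local uniform convergence away from $S$; you then only need the trivial smallness of $\F$ near $S$ and the global $C^0$ bound (ii) to close up. The paper's approach, by contrast, produces along the way the global weighted gradient estimates \eqref{bdphi} and \eqref{bdv}, which are of some independent interest but are not used again in the paper. Your closing remark is also accurate: since the subsequent Lemmas \ref{decaylemma} and \ref{con} only require $h(t)\to 0$ with no rate, nothing is lost by taking your qualitative shortcut.
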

\begin{proof} We choose $A,\lambda$ as in Lemma \ref{lemmape3}.(1).
Taking the trace with respect to $g_X$ of $\omega = \tilde{\omega}+\ddbar \varphi$ and applying Lemma \ref{lemmape3}.(i), we have
\begin{equation} \label{lapb}
\sup_{X}\mathcal{F}|\Delta_{g_X}\vp|\leq C.
\end{equation}
From Lemma \ref{lemmape3}.(ii) and the fact that $v$ is bounded we have
\begin{equation}\label{c0}
\sup_X|\vp|+\sup_X|v|\leq C.
\end{equation}
It follows that
\begin{equation}\label{bdphi}
\sup_{X} \mathcal{F}  | \de  \vp  |_{g_X}\leq C,
\end{equation}
from a rather standard interpolation type argument.  Indeed, note that the Sobolev imbedding theorem and the $L^p$ elliptic estimates give for $p> 2(n+m)$,
$$\sup_X |\de (\mathcal{F} \varphi)|_{g_X} \le C \left( \left( \int_X | \Delta_{g_X} (\mathcal{F} \varphi) |^p  \right)^{1/p} + \left( \int_X |\mathcal{F} \varphi|^p \right)^{1/p} \right),$$
where we are integrating with respect to the volume form of $g_X$.  Using (\ref{lapb}),  (\ref{c0}) and the Cauchy-Schwarz inequality, we obtain
\begin{equation} \label{numerouno}
\begin{split}
\sup_X\mathcal{F}  |\de  \vp|_{g_X} \le {} & C \left( \left( \int_X (|\partial \varphi|_{g_X} | \partial \mathcal{F}|_{g_X})^p \right)^{1/p} +1 \right) \\
 \le {} & C \left( \sup_X ( \mathcal{F} | \partial \varphi|_{g_X})^{(p-2)/p}   \left( \int_X |\partial \varphi|_{g_X}^2  \hat{\mathcal{F}} \right)^{1/p} +1 \right),
\end{split}
\end{equation}
where $\hat{\mathcal{F}} = | \partial \mathcal{F}|^p_{g_X}/\mathcal{F}^{p-2}$.  From the definition of $\mathcal{F}$ we see that $\hat{\mathcal{F}}+ | \partial \hat{\mathcal{F}}|_{g_X} \le C \mathcal{F}$  and
hence, integrating by parts,
\begin{equation} \label{numerodos}
\begin{split}
\int_X | \partial \vp|^2_{g_X} \hat{\mathcal{F}} = {} &  - \int_X  \left( \varphi ( \Delta_{g_X} \vp) \hat{\mathcal{F}}  - \varphi \langle \de \varphi, \de \hat{\mathcal{F}} \rangle_{g_X} \right)  \\ \le {} & C ( \sup_X \mathcal{F} | \partial \varphi|_{g_X} + 1).
\end{split}
\end{equation}
Then (\ref{bdphi}) follows from (\ref{numerouno}) and (\ref{numerodos}).

Next, the estimates in \cite[Section 3]{ST2}, together with a similar argument give
\begin{equation}\label{bdv}
\sup_X \mathcal{F} |\de v|_{g_X}\leq C.
\end{equation}

Combining \eqref{c0}, \eqref{bdphi} and \eqref{bdv}, we obtain
\begin{equation}\label{con1}
\sup_X |\de (\mathcal{F}(\vp-v))|_{g_X}\leq C.
\end{equation}
From Lemma \ref{lemmape3}.(iii),
\begin{equation}\label{con2}
\int_X\mathcal{F}|\vp-v|\omega_X^{n+m}\leq C\int_X|\vp-v|\omega_X^{n+m} \to 0, \quad \textrm{as } t \to \infty,
\end{equation}
and \eqref{conv} now follows  from \eqref{con1} and \eqref{con2}.
\end{proof}

Write $\dot{\varphi}$ and $\ddot{\varphi}$ for the first and second $t$-derivatives of $\varphi$.  Then:

\begin{lemma}\label{c0lem}
There is a uniform constant $C$ so that
\begin{equation}\label{bddphi}
\sup_X | \dot{\vp}|\leq C,
\end{equation}
and
\begin{equation}\label{bdddphi}
\sup_X \ddot{\vp}\leq C.
\end{equation}
\end{lemma}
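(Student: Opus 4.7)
The plan is to follow the strategy sketched in the ``Remarks on the proofs'' subsection: differentiate the complex Monge-Amp\`ere equation \eqref{ma} in $t$, derive elliptic identities for $\dot\vp$ and $\ddot\vp$, and apply a Green's function estimate on the Ricci-flat manifold $(X,\omega(t))$, which by \cite{To2, ZT} has uniformly bounded diameter.

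For $\sup_X|\dot\vp|\le C$, I would differentiate $\log\omega^{n+m}=\log c_t-nt+\log\omega_X^{n+m}$ once in $t$, using $\dot{\ti\omega}=-e^{-t}\omega_X$, to obtain
\[
\Delta\dot\vp \,=\, a_1(t)+e^{-t}\tr{\omega}{\omega_X},\qquad a_1(t):=\tfrac{\dot c_t}{c_t}-n,
\]
with $a_1$ bounded by \eqref{ct2}. Pairing this with the direct identity $\Delta\vp=(n+m)-\tr{\omega}{\chi}-e^{-t}\tr{\omega}{\omega_X}$, the $e^{-t}\tr{\omega}{\omega_X}$ term cancels in the sum and
\[
\Delta(\dot\vp+\vp)\,=\,m+a_1(t)-\tr{\omega}{\chi}
\]
is uniformly bounded on $X$, since $0\le \tr{\omega}{\chi}\le C$ by Lemma \ref{lemmaps3}. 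Differentiating the normalization $\int_X\vp\,\omega_X^{n+m}=0$, and noting that \eqref{ma} makes $\omega^{n+m}$ proportional to $\omega_X^{n+m}$ by an $x$-independent factor, shows that the average of $\dot\vp$ with respect to $\omega^{n+m}$ vanishes. Since $(X,\omega)$ is Ricci-flat with uniformly bounded diameter, the Cheng--Yau lower bound on the normalized Green's function yields the oscillation estimate $\sup_X|u-\ov u|\le C\sup_X|\Delta u|$ with $C$ independent of $t$; applied to $u=\dot\vp+\vp$, together with $|\ov\vp|\le C$ from Lemma \ref{lemmape3}(ii), this gives \eqref{bddphi}.

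For $\sup_X\ddot\vp\le C$, differentiating the identity for $\Delta\dot\vp$ once more in $t$, taking care of the $t$-dependence of the Laplacian through $\dot g^{i\ov j}=-g^{i\ov k}\dot g_{l\ov k}g^{l\ov j}$, yields
\[
\Delta\ddot\vp \,=\, a_2(t)+|\dot\omega|^2_\omega-e^{-t}\tr{\omega}{\omega_X},
\]
where $a_2(t)=(\ddot c_t c_t-\dot c_t^2)/c_t^2$ is bounded; a further differentiation of the normalization combined with $\tr{\omega}{\dot\omega}=a_1(t)$ shows that the $\omega^{n+m}$-average of $\ddot\vp$ vanishes as well. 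The main obstacle is the term $|\dot\omega|^2_\omega$, which is nonnegative but not a priori pointwise bounded, so the previous oscillation estimate cannot be applied directly. Integration over $(X,\omega^{n+m})$ supplies only the $L^1$-average bound $\int_X|\dot\omega|^2_\omega\,\omega^{n+m}\le CV$, where $V=\int_X\omega^{n+m}$. To turn this into a pointwise upper bound I would use the Green's function representation
\[
\ddot\vp(x)\,=\,\int_XG(x,y)\bigl(|\dot\omega|^2_\omega+a_2-e^{-t}\tr{\omega}{\omega_X}\bigr)(y)\,\omega^{n+m}(y),
\]
drop the favorable nonpositive contribution of $-e^{-t}\tr{\omega}{\omega_X}$ when bounding $\ddot\vp$ from above, and estimate the remaining singular integral of $|\dot\omega|^2_\omega$ against $G+C/V\ge 0$ using its off-diagonal decay (which follows from $\Ric\ge 0$ together with the diameter bound). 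Making this last step uniform in $t$ in spite of the collapsing volume $V=O(e^{-nt})$ will be the most delicate point.
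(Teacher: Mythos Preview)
Your treatment of \eqref{bddphi} is correct and matches the paper: you derive $\Delta(\vp+\dot\vp)=m+a_1(t)-\tr{\omega}{\chi}$, bound it using the Schwarz lemma, and apply the Green's function argument on $(X,\omega)$ (Ricci-flat, bounded diameter, volume $\sim e^{-nt}$, Green's function $\ge -Ce^{nt}$). This is exactly what the paper does.

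For \eqref{bdddphi}, however, you have misidentified which term is the obstacle, and your proposed workaround does not close. Recall that the one-sided Green's formula gives: if $\int_X u\,\omega^{n+m}=0$ and $\Delta u\ge -C$, then $\sup_X u\le C$ (because $u(x)=\int(-\Delta u)(G+M)\,\omega^{n+m}$ with $G+M\ge 0$ and $M\cdot\mathrm{Vol}\le C$). In your identity
\[
\Delta\ddot\vp \;=\; a_2(t)+|\dot\omega|^2_\omega-e^{-t}\tr{\omega}{\omega_X},
\]
the term $|\dot\omega|^2_\omega\ge 0$ therefore has the \emph{good} sign for an upper bound on $\ddot\vp$; it is not the obstacle. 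The problematic term is $-e^{-t}\tr{\omega}{\omega_X}$, which has no global lower bound (only $\tr{\omega}{\chi}$ is controlled by the Schwarz lemma, not $\tr{\omega}{\ti\omega}$). Your proposal to ``drop the favorable nonpositive contribution of $-e^{-t}\tr{\omega}{\omega_X}$'' tacitly assumes $G\ge 0$, which is false; after shifting to $G+M\ge 0$ you must still control $M\int_X e^{-t}\tr{\omega}{\omega_X}\,\omega^{n+m}$, and $M\sim e^{nt}$ while this integral is not obviously $O(e^{-nt})$.

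The paper's fix is much simpler and uses an ingredient you already have: add $\Delta\dot\vp=a_1(t)+e^{-t}\tr{\omega}{\omega_X}$ to $\Delta\ddot\vp$. The $e^{-t}\tr{\omega}{\omega_X}$ terms cancel, leaving
\[
\Delta(\dot\vp+\ddot\vp)\;=\;a_1(t)+a_2(t)+|\dot\omega|^2_\omega\;\ge\;-C.
\]
Since $\int_X(\dot\vp+\ddot\vp)\,\omega^{n+m}=0$ (immediate from differentiating $\int_X\vp\,\omega_X^{n+m}=0$ twice and using $\omega^{n+m}=c_t e^{-nt}\omega_X^{n+m}$), the one-sided Green's estimate gives $\sup_X(\dot\vp+\ddot\vp)\le C$, and \eqref{bdddphi} follows from \eqref{bddphi}. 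No estimate on the singular integral of $|\dot\omega|^2_\omega$ is needed.
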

\begin{proof}
First, we have
$$\Delta\vp=n+m-\tr{\omega}{\ti{\omega}}.$$
Differentiating the logarithm of \eqref{ma} with respect to $t$, we obtain
\begin{equation}\label{phidot}
(\log c_t)'-n=\tr{\omega}({\ddbar\dot{\vp}}+\chi-\ti{\omega}),
\end{equation}
and so
$$\Delta\dot{\vp}=-n+(\log c_t)'+\tr{\omega}{\ti{\omega}}-\tr{\omega}{\chi}.$$
Hence
\begin{equation} \label{vpdvp3}
\Delta(\vp+\dot{\vp})=m-\tr{\omega}{\chi}+(\log c_t)'.
\end{equation}
We have that $(\log c_t)'=O(e^{-t})$, because thanks to \eqref{ct} we can write
$$\log c_t=\log\left(1+\sum_{k=1}^m a_ke^{-kt}\right)+{\rm const},$$
for some positive constants $a_k$, and the claim follows from differentiating this expression. Taking one more derivative, we also see that $(\log c_t)''=O(e^{-t}).$

From the Schwarz Lemma (\ref{ps3}) we have $\sup_X \tr{\omega}{\chi}\leq C$, and hence $|\Delta(\vp+\dot{\vp})|\leq C$. The normalization for $\vp$ implies that $\int_X \dot{\vp}\omega_X^{n+m}=0$. We now compute the Laplacian of $\ddot{\vp}$, by taking a derivative of \eqref{phidot} to get
$$\Delta \ddot{\vp}=(\log c_t)''+\tr{\omega}{\chi}-\tr{\omega}{\ti{\omega}}+|{\ddbar\dot{\vp}}+\chi-\ti{\omega}|^2_g,$$
and so $\Delta(\dot{\vp}+\ddot{\vp})\geq -C$ and $\int_X \ddot{\vp}\omega_X^{n+m}=0$.

The Ricci-flat metrics $\omega$ have a uniform upper bound on their diameter, thanks to \cite{To2} (and independently \cite{ZT}), and have volume bounded below by $C^{-1}e^{-nt}$, by \eqref{ma}. Therefore the Green's function $G(x,y)$ of the Laplacian of $\Delta$ (normalized by $\int_{y\in X} G(x,y)\omega^{n+m}(y)=0$), satisfies
$$G(x,y)\geq -Ce^{nt},$$
for a uniform constant $C$ (see e.g. \cite[Theorem 3.2]{BM} or \cite[Chapter 3, Appendix A]{Si}). Also, again from \eqref{ma},
$$\int_X(\vp+\dot{\vp})\omega^{n+m}=c_te^{-nt}\int_X(\vp+\dot{\vp})\omega_X^{n+m}=0,$$
and so the Green's formula for the metric $\omega$ and the bound $| \Delta (\varphi + \dot{\varphi})|\le C$ imply that
\[\begin{split}
\sup_X(\vp+\dot{\vp})&=-\int_{y \in X} \Delta(\vp+\dot{\vp})(y)G(x,y)\omega^{n+m}(y)\\
&=\int_{y \in X} (-\Delta(\vp+\dot{\vp})(y))(G(x,y)+Ce^{nt})\omega^{n+m}(y)\\
&\leq Ce^{nt}\int_X \omega^{n+m}\leq C,
\end{split}\]
where $x$ is any point where $\vp+\dot{\vp}$ achieves its maximum. Similarly we get a lower bound for $\vp+\dot{\vp}$, and so we have shown
\begin{equation}\label{bddd}
\sup_X|\vp+\dot{\vp}|\leq C,
\end{equation}
which, together with $\sup_X|\vp|\leq C$ (Lemma \ref{lemmape3}.(ii)), proves \eqref{bddphi}.

Next, since $\Delta(\dot{\vp}+\ddot{\vp})\geq -C$ and $\int_X (\dot{\vp}+\ddot{\vp})\omega_X^{n+m}=0$, exactly the same argument shows that
$$\sup_X (\dot{\vp}+\ddot{\vp})\leq C,$$
which together with \eqref{bddphi} proves \eqref{bdddphi}.
\end{proof}

\begin{remark}
One can feed the bound \eqref{bddd} into a Cheng-Yau type argument, exactly as in \cite[Lemma 3.6]{So2}, by applying the maximum principle to
$$Q=\frac{|\de(\vp+\dot{\vp})|_{g}^2}{A-\vp-\dot{\vp}}+\tr{\omega}{\chi},$$
where $A$ is chosen so that $A-\vp-\dot{\vp}\geq A/2>0$, and prove that
$$\sup_X |\de(\vp+\dot{\vp})|_{g}\leq C.$$
However, we won't need this estimate.
\end{remark}

The quantity $(\varphi + \dot{\varphi}-v)$ will play here the same role as in Section \ref{sectionkrfsing}.  Indeed,
observe that from (\ref{vpdvp3}),
\begin{equation} \label{vpdvpmv}
\Delta (\varphi + \dot{\varphi} - v) = m- \tr{\omega}{\omega_B} + (\log c_t)',
\end{equation}
and $(\log c_t)' = O(e^{-t})$ (cf. (\ref{pdpv})).   The following lemma is analogous to part (iv) of Lemma \ref{lemmape2}.

\begin{lemma} \label{decaylemma}
There is a positive function $H(t)$ with $H(t)\to 0$ as $t\to\infty$ such that
\begin{equation}\label{decay}
\sup_X \mathcal{F} |\vp+\dot{\vp}-v|\leq H(t).
\end{equation}
\end{lemma}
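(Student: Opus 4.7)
The plan is to reduce to showing $\sup_X \F|\dot{\vp}| \to 0$, after which the claim follows from the triangle inequality combined with Lemma \ref{zero}. The reduction is immediate: $\F|\vp + \dot{\vp} - v| \leq \F|\vp - v| + \F|\dot{\vp}|$, so setting $H(t) = h(t) + G(t)$, where $G(t) \to 0$ is any bound on $\F|\dot{\vp}|$, finishes the proof.

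To bound $\F|\dot{\vp}|$, I will use an interpolation-type argument in the time variable whose key ingredients are the one-sided bound $\ddot{\vp} \leq C_0$ together with the two-sided bound $|\dot{\vp}| \leq C_0$ from Lemma \ref{c0lem}, and the convergence estimate $\F|\vp - v| \leq h(t)$ from Lemma \ref{zero} (where we may assume $h$ decreasing, as in the corresponding step at the start of the proof of Lemma \ref{con22}). Fix $x \in X$ and $t$ large, and write $M = \dot{\vp}(x, t)$. If $M > 0$, integrating $\ddot{\vp} \leq C_0$ backward in time forces $\dot{\vp}(x, s) \geq M/2$ for $s \in [t - \gamma, t]$ with $\gamma := M/(2C_0) \leq 1/2$. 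Integrating $\dot{\vp}$ on this interval gives
$$\vp(x, t) - \vp(x, t - \gamma) \geq M^2/(4 C_0),$$
while Lemma \ref{zero}, together with the fact that $v$ is $t$-independent, bounds the left side by $2 h(t - 1/2)/\F(x)$. Using $\F \leq 1$, this yields $\F(x) M \leq \sqrt{8 C_0 h(t - 1/2)}$. If $M < 0$, the symmetric argument integrating $\ddot{\vp} \leq C_0$ forward in time gives the analogous bound.

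The main obstacle compared to the corresponding Lemma \ref{lemmaknown1}.(iv) and Lemma \ref{lemmape2}.(iv) in the K\"ahler--Ricci flow setting is that here we only have a one-sided bound on $\ddot{\vp}$, reflecting the particular form of its evolution equation in the Ricci-flat collapsing setting. This is handled by splitting on the sign of $\dot{\vp}$: when $\dot{\vp}(x,t) > 0$, the upper bound on $\ddot{\vp}$ forces $\dot{\vp}$ to have been large-positive in the recent past, and when $\dot{\vp}(x,t) < 0$, it forces $\dot{\vp}$ to remain large-negative into the near future. Either way, the resulting change in $\vp$ over a short time interval of definite length contradicts the smallness of $\F|\vp - v|$ guaranteed by Lemma \ref{zero} unless $\F|\dot{\vp}|$ itself is already small, yielding the required bound.
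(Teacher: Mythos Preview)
Your proof is correct and follows essentially the same approach as the paper's: both reduce via Lemma~\ref{zero} to controlling $\mathcal{F}\dot{\vp}$, then exploit the one-sided bound $\ddot{\vp}\le C$ from Lemma~\ref{c0lem} by integrating backward in time when $\dot{\vp}>0$ and forward when $\dot{\vp}<0$, comparing the resulting change in $\vp$ against the smallness of $\mathcal{F}|\vp-v|$. The only cosmetic difference is that you give a direct quantitative bound $\mathcal{F}|\dot{\vp}|\le C\sqrt{h(t-1/2)}$, whereas the paper argues by contradiction with sequences to obtain the $\limsup/\liminf$ statements.
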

\begin{proof}
First we prove that $\limsup_{t\geq 0}\sup_X \mathcal{F}(\vp+\dot{\vp}-v)\leq 0$.
Thanks to \eqref{conv} it is enough to show that
$$\limsup_{t\geq 0}\sup_X\mathcal{F}\dot{\vp}\leq 0.$$ If this is not the case, then there exist $\ve>0$, $x_k \in X$ and $t_k\to\infty$ such that
$$\sup_X\mathcal{F}\dot{\vp}(t_k)=\mathcal{F}(x_k)\dot{\vp}(t_k,x_k)\geq \ve,$$
so in particular $x_k\not\in S$. From Lemma \ref{c0lem} we see that
\begin{equation}\label{change}
\frac{\de}{\de t}\left( \mathcal{F}\dot{\vp}\right)= \mathcal{F}\ddot{\vp}\leq C,
\end{equation}
and so
$$\mathcal{F}(x_k) \dot{\vp}(t,x_k)\geq \frac{\ve}{2}, \quad \textrm{for } t\in [t_k-\frac{\ve}{2C}, t_k].$$
Integrating over $t$,
$$\mathcal{F}(x_k)(\vp-v)(t_k,x_k)\geq \mathcal{F}(x_k)(\vp-v)(t_k-\ve/2C,x_k)+\frac{\ve^2}{4C},$$
but from \eqref{conv} we have
$$h(t_k)\geq -h(t_k-\ve/2C)+\frac{\ve^2}{4C},$$
for a positive function $h(t)$ with $h(t) \rightarrow 0$ as $t \rightarrow \infty$.  Letting $k\to\infty$ we get a contradiction.

The inequality $\liminf_{t\geq 0}\inf_X \F(\vp+\dot{\vp}-v)\geq 0$ follows from a similar argument.   Although we do not have a lower bound for $\ddot{\varphi}$, we can instead replace the time interval $[t_k-\frac{\ve}{2C}, t_k]$ by $[t_k, t_k+ \frac{\ve}{2C}]$.
\end{proof}

We can finally prove the following analogue of Lemma \ref{con22}:
\begin{lemma}\label{con}
Given a compact set $K\subset X\backslash S$, there is a positive decreasing function $F(t)$ which goes to zero as $t \rightarrow \infty$ such that
 $$\sup_K(\tr{\omega}{\omega_B}-m) \le F(t).$$
\end{lemma}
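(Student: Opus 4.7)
The plan is to mimic the maximum-principle argument of Lemma \ref{con22}, with the parabolic operator $\partial_t - \Delta$ replaced by $-\Delta$ applied at each fixed $t$. Set $u = \vp + \dot{\vp} - v$. The ingredients needed are: \eqref{vpdvpmv}, giving $\Delta u = m - \tr{\omega}{\omega_B} + O(e^{-t})$ (analogue of \eqref{pdpv}); Lemma \ref{decaylemma}, giving $\sup_X \F\, |u| \le H(t) \to 0$ (replacing the $\sigma^\lambda$-decay used in the previous section); and \eqref{ps4}, giving $\Delta \tr{\omega}{\omega_B} \ge -C\sigma^{-3\lambda}$. Since the decay of $u$ now involves the super-polynomially small function $\F$ rather than a power of $\sigma$, I will replace the cutoff $\sigma^{3\lambda}$ used in Lemma \ref{con22} by $\F^\alpha$ for a fixed $\alpha > 1$. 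After harmlessly enlarging $H(t)$ one may also assume $e^{-t} = o(H(t))$.

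At each fixed large $t$, pick a smooth positive function $\ell(t) \to \infty$ with $\ell(t)^2 H(t) \le 1$ (for instance $\ell(t) = H(t)^{-1/2}$), and consider the test function
$$Q = \ell(t)\F^\alpha(\tr{\omega}{\omega_B} - m) - \frac{1}{H(t)}\F^\alpha u.$$
By \eqref{ps3}, $\tr{\omega}{\omega_B} \le C\sigma^{-\lambda}$, and by \eqref{defnF}, $\F^\alpha$ decays faster than any power of $\sigma$, so $Q$ extends continuously by zero across $S$ and attains its maximum at an interior point $x_0 \in X\setminus S$.

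At $x_0$ apply the elliptic maximum principle $\Delta Q(x_0) \le 0$ together with $\nabla Q(x_0) = 0$. Expand $\Delta Q$ via the product rule, substitute \eqref{ps4} and \eqref{vpdvpmv}, and use the critical-point condition to convert the mixed gradient terms into multiples of $|\nabla \F^\alpha|^2/\F^\alpha$. The double-exponential form of $\F$ in \eqref{defnF} forces any quantity of the form $\F^\beta \sigma^{-N}$ (for $\beta, N > 0$) to be uniformly bounded on $X$; this controls every error term in the expansion, including those involving $|\nabla \F^\alpha|^2$, $\Delta \F^\alpha$, and the factor $u/H(t)$ (which is bounded by $1/\F$, so $\alpha > 1$ is needed to absorb it). The net outcome is
$$\frac{\F^\alpha(x_0)(\tr{\omega}{\omega_B}(x_0) - m)}{H(t)} \le C(\ell(t) + 1),$$
so $\F^\alpha(x_0)(\tr{\omega}{\omega_B}(x_0) - m) \le C H(t)\ell(t)$. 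Using $|\F^\alpha u|/H(t) \le \F^{\alpha - 1} \le 1$ then gives $Q_{\max} \le C H(t)\ell(t)^2 + 1 \le C'$, and for every $x \in X$ one has
$$\ell(t)\F^\alpha(x)(\tr{\omega}{\omega_B}(x) - m) = Q(x) + \F^\alpha(x) u(x)/H(t) \le Q_{\max} + 1 \le C''.$$
Thus $\sup_X \F^\alpha(\tr{\omega}{\omega_B} - m) \le C''/\ell(t) \to 0$, and since $\F^\alpha$ is bounded below on any compact $K \subset X \setminus S$, Lemma \ref{con} follows with $F(t) := C''/(c(K)\ell(t))$.

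The main obstacle is the bookkeeping of error terms in the expansion of $\Delta Q$: one must verify that every combination of derivatives of $\F^\alpha$ against the singular quantities $\sigma^{-\lambda}$, $\tr{\omega}{\omega_B}$ and $u/H(t)$ remains uniformly bounded, which is what ultimately forces both the double-exponential choice of cutoff and the condition $\alpha > 1$. Unlike in Lemma \ref{con22} there is no $\partial_t$ contribution to exploit or balance against $\ell'(t)$, $H'(t)$, but none is needed since the decay $H(t)$ provided by Lemma \ref{decaylemma} is already sufficient once one allows $\ell(t) \to \infty$.
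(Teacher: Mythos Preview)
Your proposal is correct and follows essentially the same maximum-principle argument as the paper. The only differences are cosmetic: the paper uses the cutoff $\tilde{\F} = e^{-e^{\tilde{A}\sigma^{-\lambda}}}$ with $\tilde{A}>A$ in place of your $\F^\alpha$ with $\alpha>1$ (both ensure the cutoff decays slightly faster than $\F$ so that the $u$-terms are absorbed), and it streamlines the test function to $Q = \tilde{\F}(\tr{\omega}{\omega_B}-m) - \tfrac{\tilde{\F}}{\sqrt{H(t)}}(\vp+\dot{\vp}-v)$, dispensing with the auxiliary $\ell(t)$ and obtaining $Q \le C\sqrt{H(t)}$ directly.
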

\begin{proof}
For the purpose of this proof,  define a function $\tilde{\F} = e^{-e^{\tilde{A}\sigma^{-\lambda}}}$, with $\tilde{A}$ slightly larger than the constant $A$ of $\mathcal{F}$ used in (\ref{decay}).
We will apply the maximum principle to
$$Q=\tilde{\mathcal{F}} (\tr{\omega}{\omega_B}-m)-\frac{\tilde{\mathcal{F}}}{\sqrt{H(t)}}(\vp+\dot{\vp}-v),$$
where $H(t)$ is the function from \eqref{decay}.

We may assume, by increasing $\tilde{A}$ if necessary, that
\begin{equation}
\frac{|\nabla \tilde{\F}|^2}{\tilde{\mathcal{F}}} \leq C \mathcal{F}, \quad | \Delta \tilde{\mathcal{F}}|\le C \mathcal{F},
\end{equation}
where we used \eqref{stupidbounds2} and the Schwarz Lemma estimate $\tr{\omega}{\chi}\leq C$. It follows from Lemma \ref{decaylemma} that
\begin{equation} \label{vpdf}
|(\vp+\dot{\vp}-v)\Delta \tilde{\F}|\leq CH(t)
\end{equation}
and
\begin{equation}\label{idiot}
|\tilde{\F}(\vp+\dot{\vp}-v)| \frac{|\nabla \tilde{\F}|^2}{\tilde{\F}^2}\leq CH(t).
\end{equation}

Then we compute, using (\ref{vpdvpmv}) and (\ref{vpdf}),
\[\begin{split}
\lefteqn{\Delta(\tilde{\F}(\vp+\dot{\vp}-v))} \\  \le {} & -\tilde{\F}(\tr{\omega}{\omega_B}-m-(\log c_t)')  +2\mathrm{Re}\langle\nabla \tilde{\F}, \nabla(\vp+\dot{\vp}-v)\rangle + CH(t). \\
\end{split}\]
From Lemma \ref{lemmaps3}, we have, for some $\lambda'>0$,
\[\begin{split}
\Delta(\tilde{\F}(\tr{\omega}{\omega_B}-m))&\geq-C\tilde{\F} \sigma^{-\lambda'}+(\tr{\omega}{\omega_B}-m)\Delta\tilde{\F}\\
&+2\mathrm{Re}\langle\nabla \tilde{\F}, \nabla(\tr{\omega}{\omega_B}-m)\rangle\\
&\geq 2\mathrm{Re}\langle\nabla \tilde{\F}, \nabla(\tr{\omega}{\omega_B}-m)\rangle-C.
\end{split}\]
Hence
\[\begin{split}
\Delta Q&\geq\frac{\tilde{\F}}{\sqrt{H(t)}}(\tr{\omega}{\omega_B}-m-(\log c_t)')-C
+2\mathrm{Re}\langle\nabla \tilde{\F}, \nabla(Q/\tilde{\F})\rangle\\
&=\frac{\tilde{\F}}{\sqrt{H(t)}}(\tr{\omega}{\omega_B}-m-(\log c_t)')-C+\frac{2}{\tilde{\F}}\mathrm{Re}\langle\nabla \tilde{\F},\nabla Q\rangle  -\frac{2Q}{\tilde{\F}^2}|\nabla\tilde{\F}|^2\\
&\geq \frac{\tilde{\F}}{\sqrt{H(t)}}(\tr{\omega}{\omega_B}-m-(\log c_t)')-C+\frac{2}{\tilde{\F}}\mathrm{Re}\langle\nabla \tilde{\F},\nabla Q\rangle,
\end{split}\]
where in the last line we used \eqref{idiot}. At the maximum of $Q$ we have $\nabla Q=0$.
 We may assume without loss of generality that $(\log c_t)'/\sqrt{H(t)} \rightarrow 0$ as $t \rightarrow \infty$.   Making use of Lemma \ref{decaylemma}, we obtain $Q \le C \sqrt{H(t)}$.  The result follows.
\end{proof}

Another ingredient that we will need is the following local Calabi estimate, whose proof can be obtained by a straightforward modification of the proof of Proposition \ref{1localest}.
\begin{proposition}\label{localest3}
Let $B_1(0)$ be the unit polydisc in $\mathbb{C}^{n+m}$ and let $\omega_E^{(n+m)} = \sum_{k=1}^{m+n} \sqrt{-1} dz^k \wedge d\ov{z}^k$ be the Euclidean metric.
Write $\mathbb{C}^{n+m}=\mathbb{C}^{m}\oplus\mathbb{C}^{n}$ and define $\omega_{E,t}=\omega_{E}^{(m)} + e^{-t}\omega^{(n)}_E$.
Assume that $\omega=\omega(t)$ is a Ricci-flat K\"ahler metric which satisfies for $t \ge 0$,
\begin{equation}\label{2assum}
A^{-1}\omega_{E,t}\leq \omega \leq A\, \omega_{E,t},
\end{equation}
for some positive constant $A$.
Then there is a constant $C$ that depends only on $n,m,A$ such that for all $t\geq 0$ on $B_{1/2}(0)$ we have
\begin{equation}\label{2bo}
S=|\nabla^E g|^2_{g} \leq Ce^t,
\end{equation}
where $g$ is the metric associated to $\omega$ and $\nabla^E$ is the covariant derivative of $\omega_E^{(n+m)}$.
\end{proposition}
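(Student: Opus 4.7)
The plan is to adapt the proof of Proposition \ref{1localest} to the elliptic Ricci-flat setting, where the absence of a time derivative and the favourable curvature hypotheses (Ricci-flatness of $\omega$, flatness of $\omega_{E,t}$) lead to some simplifications. Let $T$ denote the tensor given by the difference of Christoffel symbols of $\omega$ and $\omega_E$, so that $S = |T|^2_g$. The two computational inputs I will need are, first, the Calabi--Yau third-order inequality in the Ricci-flat/flat-background case,
$$\Delta S \geq |\nabla T|^2 + |\ov{\nabla} T|^2 - c_0 S,$$
for some constant $c_0 \geq 0$ (in fact $c_0$ may be taken to be $0$ here, but this is inessential); and second, the elliptic analogue of the second-order identity used in Proposition \ref{1localest},
$$\Delta\,\tr{\omega_{E,t}}{\omega} \;=\; g_{E,t}^{i\ov{\ell}}\, g^{p\ov{j}}\, g^{k\ov{q}}\, \nabla^E_i g_{k\ov{j}}\, \nabla^E_{\ov{\ell}} g_{p\ov{q}} \;\geq\; A^{-1} S,$$
where the inequality uses $g \leq A\, g_{E,t}$ to replace one factor of $g^{-1}$ by $A^{-1}\, g_{E,t}^{-1}$. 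Both identities fall out of the standard K\"ahler normal-coordinate computations once one uses $\Ric(\omega)=0$ and that $\omega_{E,t}$ has vanishing bisectional curvature, with no need for the $+S$ absorption term that appeared in the parabolic case.

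Next, I would take the same cutoff $\psi$ as in Proposition \ref{1localest}, so that $|\nabla \psi|^2_g \leq Ce^t$ and $|\Delta(\psi^2)| \leq Ce^t$ follow from $\omega \leq A\omega_{E,t}$. Expanding $\Delta(\psi^2 S)$, bounding the Hessian-of-$\psi^2$ contribution by $CSe^t$, and using Young's inequality on the cross term $2\,\mathrm{Re}\,\langle \nabla \psi^2, \nabla S\rangle$ to absorb it into $\psi^2(|\nabla T|^2 + |\ov{\nabla} T|^2)$ exactly as in the parabolic argument, one obtains
$$-\Delta(\psi^2 S) \;\leq\; CSe^t, \qquad \textrm{equivalently} \qquad -\Delta(e^{-t}\psi^2 S) \;\leq\; CS.$$

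Combining this with the trace inequality, for $C_0$ chosen sufficiently large depending only on $A$ and the constants above, the function
$$Q \;:=\; e^{-t}\psi^2 S + C_0\, \tr{\omega_{E,t}}{\omega}$$
satisfies $\Delta Q \geq 0$ on $B_1(0)$. By the elliptic maximum principle, applied on $\overline{B_r(0)}$ for any $r<1$ such that $\psi$ vanishes in a neighbourhood of $\partial B_r(0)$, the supremum of $Q$ on $B_r(0)$ is attained where $\psi = 0$ and is therefore bounded by $C_0(n+m)A$ via the upper bound $\omega \leq A\omega_{E,t}$. Restricting to $B_{1/2}(0)$, where $\psi \equiv 1$, this gives $e^{-t}S \leq C$, i.e., $S \leq Ce^t$, as required. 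There is no serious obstacle: the argument is essentially a transcription of Proposition \ref{1localest} with $\tfrac{\partial}{\partial t} - \Delta$ replaced by $-\Delta$, and with Ricci-flatness of $\omega$ together with flatness of the background eliminating the need for the extra curvature-absorption terms that appeared in the parabolic case.
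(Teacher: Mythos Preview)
Your proposal is correct and is precisely the ``straightforward modification of the proof of Proposition \ref{1localest}'' that the paper invokes in lieu of a written argument: replace the heat operator $\partial_t-\Delta$ by $-\Delta$, use Ricci-flatness of $\omega$ and flatness of $\omega_{E,t}$ to kill the curvature terms in both the Calabi identity and the Aubin--Yau trace identity, and run the same cutoff-plus-barrier maximum principle. One small slip: the inequality $g_{E,t}^{i\bar\ell}\,g^{p\bar j}g^{k\bar q}\nabla^E_i g_{k\bar j}\nabla^E_{\bar\ell}g_{p\bar q}\ge A^{-1}S$ uses the \emph{lower} bound $A^{-1}\omega_{E,t}\le\omega$ (giving $g_{E,t}^{-1}\ge A^{-1}g^{-1}$), not the upper bound you cite; since both are assumed this is harmless.
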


We can now complete the proof of Theorem \ref{mainthm3}.

\begin{proof}[Proof of Theorem \ref{mainthm3}]
The proof is similar to the proof of Theorem \ref{mainthm2}, so we provide here just a sketch.   Fix a compact set $K' \subset B\backslash S$ and write $K=\pi^{-1}(K')$.
From Lemma \ref{lemmape3}.(i) and Proposition \ref{localest3}, as in the case of the flow, there is a constant $C$ such that
\begin{equation}\label{c1est}
\|e^t\omega|_{X_y}\|_{C^1(X_y,\omega_X|_{X_y})}\leq C , \quad e^t\omega|_{X_y}\geq C^{-1}\omega_X|_{X_y},
\end{equation}
for all $t\geq 0$ and for all $y\in K'$.

On $X\backslash S$, write for any $y \in K'$,
\[\begin{split}
(e^t\omega|_{X_y})^n&= e^{nt}\frac{\omega^n\wedge\omega_B^{m}}{\omega_{{\rm SRF}}^n\wedge\omega_B^m}(\omega_{{\rm SRF},y})^n\\
&=f (\omega_{{\rm SRF},y})^n, \quad \textrm{for } f = c_t\frac{\int_{X}\omega_X^{n+m}}{\int_{X} \omega_X^n\wedge\chi^m}\frac{\omega^n\wedge\omega_B^{m}}{\omega^{n+m}},
\end{split}\]
where we used \eqref{ct3} and \eqref{ma}.  Note that when restricted to the fiber, $f$ is given by
\begin{equation}\label{fib}
f|_{X_y}=\frac{(e^t\omega|_{X_y})^n}{(\omega_{{\rm SRF},y})^n}.
\end{equation}
We have that
\begin{equation}\label{intt}
\int_{X_y}f(\omega_{\textrm{SRF},y})^n=\int_{X_y} (e^t\omega|_{X_y})^n=\int_{X_y}(\omega_X|_{X_y})^n=\int_{X_y}(\omega_{\textrm{SRF},y})^n,
\end{equation}
so that $f-1$ satisfies condition (b) of Lemma \ref{1conve}. Thanks to \eqref{ct2},
\begin{equation}\label{expd}
c_t\frac{\int_{X}\omega_X^{n+m}}{\int_{X} \omega_X^n\wedge\chi^m}\to\binom{n+m}{n},
\end{equation}
as $t\to\infty$.
Moreover, as in (\ref{1amgm}), using Lemma \ref{con},
$$\binom{n+m}{n}\frac{\omega^n\wedge\omega_B^{m}}{\omega^{n+m}}\le (\tr{\omega}{\omega_B}/m)^{m} \leq 1+ H(t),$$
for $H(t) \rightarrow 0$ as $t\rightarrow \infty$.  Hence $f-1 \le h(t),$ with $h(t)\rightarrow 0$ as $t \rightarrow \infty$.  Finally, $f$ satisfies
$| \nabla (f|_{X_y})|_{\omega_X|_{X_y}} \le A$ for all $y \in K'$.  Applying Lemma \ref{1conve} to $f-1$ on $K$ (see Remark \ref{remarkcpt}), $f$ converges to $1$ uniformly on $K$.  Namely,
\begin{equation}\label{volcon}
\|(e^{t}\omega|_{X_y})^n-\omega_{{\rm SRF},y}^n\|_{C^0(X_y,\omega_X|_{X_y})}\to 0, \quad \textrm{as } t\rightarrow \infty,
\end{equation}
uniformly as $y$ varies in $K'$.  It follows that for any given $y \in K'$ we have
$$e^t \omega|_{X_y} \rightarrow \omega_{\textrm{SRF},y}, \quad \textrm{as } t\rightarrow \infty,$$
in $C^{\alpha}$ on $X_y$, for any $\alpha \in (0,1)$.

Next we show that
\begin{equation} \label{fwca}
\| e^t \omega|_{X_y} - \omega_{\textrm{SRF},y} \|_{C^0(X_y, \omega_X|_{X_y})} \rightarrow 0, \quad \textrm{as } t \rightarrow \infty,
\end{equation}
\emph{uniformly} for $y \in K'$.  To see this, define
$$f|_{X_y}=\frac{(e^t\omega|_{X_y})\wedge \omega_{{\rm SRF},y}^{n-1}}{\omega_{{\rm SRF},y}^n}.$$
While this is defined on $X_y$, it is clearly smooth in $y$ and so defines a smooth function $f$ on $X\backslash S$, which equals
\[\begin{split}f&=\frac{e^t\omega\wedge \omega_{{\rm SRF}}^{n-1}\wedge\omega_B^m}{\omega_{{\rm SRF}}^n\wedge\omega_B^m}
=c_t\frac{\int_{X}\omega_X^{n+m}}{\int_{X} \omega_X^n\wedge\chi^m}\frac{\omega\wedge (e^{-t}\omega_{{\rm SRF}})^{n-1}\wedge\omega_B^m}{\omega^{n+m}}.
\end{split}\]
By the same argument as in the proof of (\ref{mtii}) in Section 2, we see that $f$ converges 1 as $t \rightarrow 0$ and hence
\begin{equation} \label{g2}
\| (e^t \omega|_{X_y}) \wedge \omega_{\textrm{SRF},y}^{n-1} - \omega_{\textrm{SRF},y}^n \|_{C^0(X_y, \omega_X|_{X_y})} \rightarrow 0, \quad \textrm{as } t \rightarrow \infty,
\end{equation}
uniformly as $y$ varies in $K'$.  From (\ref{volcon}) and (\ref{g2}), we apply  Lemma \ref{lemmamatrix} to obtain (\ref{fwca}) as required.

It remains to prove part (i) of Theorem \ref{mainthm3}.  Define $\hat{\omega}=\hat{\omega}(t)$ by
$$\hat{\omega} = e^{-t} \omega_{\textrm{SRF}} + (1-e^{-t}) \omega_B.$$
 By the same argument as in the proof of Theorem \ref{mainthm2}.(i), we have on $K$,
\begin{equation}\label{one1}
\tr{\omega}{\hat{\omega}}-(n+m)\leq h(t) \rightarrow 0,
\end{equation}
for a positive decreasing function $h(t)$ (depending on $K$).
On the other hand, again on $K$,
\[
\begin{split}
\frac{\hat{\omega}^{n+m}}{\omega^{n+m}}= {} & \binom{n+m}{n}\frac{e^{-nt}\of^n\wedge\omega_B^m}{\omega^{n+m}}+O(e^{-t}) \\
= {} & \frac{\binom{n+m}{n}\int_X\omega_X^n\wedge\chi^m}{c_t\int_X\omega_X^{n+m}}+O(e^{-t})\to 1, \quad \textrm{as } t\rightarrow \infty.
\end{split}
\]
Applying Lemma \ref{lemmamatrix}, we have $\| \omega - \hat{\omega}\|_{C^0(K, \omega)}   \rightarrow 0$  and part (i) of Theorem \ref{mainthm3} follows.
\end{proof}

\end{document}